\documentclass[reqno,12pt,letterpaper]{amsart}
\usepackage{amsmath,amssymb,amsthm,graphicx,mathrsfs,url,bbm,array,enumerate}
\usepackage[usenames,dvipsnames]{xcolor}
\usepackage[colorlinks=true,linkcolor=Red,citecolor=Green]{hyperref}
\usepackage{mathabx}
\usepackage{mathtools}

\usepackage[
    backend=biber,
    style=alphabetic,
    giveninits=true,
    maxalphanames=10,
    maxnames=10,
    doi=false,
    url=false,
    isbn=false,
]{biblatex}
\DeclareFieldFormat{pages}{#1}
\renewbibmacro{in:}{%
  \ifentrytype{article}
    {}
    {\bibstring{in}%
     \printunit{\intitlepunct}}}
\DeclareFieldFormat
  [article,inbook,incollection,inproceedings,patent,thesis,unpublished]
  {title}{\mkbibemph{#1}}
\DeclareFieldFormat{journaltitle}{#1\isdot}
\DeclareFieldFormat[article]{volume}{\mkbibbold{#1}}
\DeclareFieldFormat[article]{number}{\bibstring{number}\addnbspace #1}

\renewbibmacro*{journal+issuetitle}{%
  \usebibmacro{journal}%
  \setunit*{\addspace}%
  \iffieldundef{series}
    {}
    {\newunit
     \printfield{series}%
     \setunit{\addspace}}%
  \printfield{volume}%
  \setunit{\addspace}%
  \usebibmacro{issue+date}%
  \setunit{\addcomma\space}%
  \printfield{number}%
  \setunit{\addcolon\space}%
  \usebibmacro{issue}%
  \setunit{\addcomma\space}%
  \printfield{eid}
  \newunit}

 \DeclareFieldFormat*{title}{#1}
 \DeclareFieldFormat{title}{#1}
 \DeclareFieldFormat
   [article,inbook,incollection,inproceedings,patent,thesis,unpublished]
   {title}{\mkbibquote{#1\isdot}}
 \DeclareFieldFormat
   [suppbook,suppcollection,suppperiodical]
   {title}{#1}

\newcolumntype{L}{>{$}l<{$}}

\def\?[#1]{\textbf{[#1]}\marginpar{\Large{\textbf{??}}}}
\newtheorem{thm}{Theorem}
\newtheorem*{thm*}{Theorem}
\newtheorem{prop}{Proposition}

\newtheorem{defi}[prop]{Definition}
\newtheorem{lem}[prop]{Lemma}
\newtheorem{cor}[prop]{Corollary}
\newtheorem{rem}[prop]{Remark}

\numberwithin{equation}{section}
\numberwithin{prop}{section}

\theoremstyle{definition}

\DeclareMathOperator{\supp}{supp}

\DeclareMathOperator{\WF}{WF}

\newcommand{\bbH}{\mathbb H}

\newcommand{\bbR}{\mathbb R}

\newcommand{\bbZ}{\mathbb Z}

\definecolor{green}{rgb}{0,0.8,0}

\title[Resolvent estimate without evenness assumption]{Resolvent estimate for asymptotically hyperbolic surfaces without evenness assumption}
\author{Wenzheng Wang}
\date{July 2026}
\address{Mathematical Institute,
Rheinische Friedrich-Wilhelms-Universität Bonn,
Endenicher Allee 60,
53115 Bonn,
Germany}
\email{wwenzheng819@gmail.com}

\begin{document}
\maketitle

\begin{abstract}
    We prove a sharp cut-off resolvent estimate for the Laplacian on non-compact Riemannian manifolds whose ends satisfy the Cardoso--Vodev assumption and whose geometry near the trapped set agrees with that of an even asymptotically hyperbolic manifold with the corresponding resolvent estimate. As applications, we show the global Strichartz and spectral projection estimates on negatively curved asymptotically hyperbolic surfaces, without assuming the evenness at infinity.
\end{abstract}


\section{Introduction}
Let $(M,g)$ be a complete Riemannian manifold of dimension $n\geq 2$. Let $\Delta_g$ be the Laplace--Beltrami operator on $M$. Let 
\[
R_M(\lambda\pm i0)=(-\Delta_g - \frac{(n-1)^2}{4}-  (\lambda\pm i0)^2)^{-1},\quad \lambda\in \bbR_+.
\]
be the resolvent of the Laplacian at $\lambda^2$, where $\pm i0$ means taking the limit from the upper half-space or the lower half-space. Let $\chi\in C_c^\infty(M)$ be a cutoff function on $M$. We will study the behavior of $\chi R_M(\lambda\pm i0)\chi$ for $\lambda$ large enough. When there is no trapping (see \eqref{eq:trapped-set}), one typically has
\begin{equation}
    \|{\chi R_M(\lambda\pm i0)\chi}\|_{L^2\to L^2}\leqslant C\lambda^{-1}, \quad \lambda\geqslant C.
\end{equation}
See, for example, \cite{chen2018resolventII} for the asymptotically hyperbolic case, and \cite{cardoso2002uniform} for more general assumptions on the ends of the manifold.

When $M$ has trapping, an additional logarithmic loss is in general unavoidable, even for surfaces; see \cite[Theorem 7.1, 7.2]{dyatlov2019mathematical}. For \emph{even} asymptotically hyperbolic surface $\tilde{M}$ with negative curvature, we have the sharp estimate (\cite{tao2021spectral})
\begin{equation}\label{eq:far-away}
    \|\chi R_{\tilde{M}}(\lambda\pm i0)\chi\|_{L^2\to L^2}\leqslant C\lambda^{-1}\log\lambda, \quad \lambda\geqslant C.
\end{equation}

In \cite{Bounded_HSTZ}, together with the resolvent estimate \eqref{eq:far-away}, one can establish the spectral projection estimates and global Strichartz estimates. The aim of this article is to extend the estimate \eqref{eq:far-away} to a more general family of surfaces. Namely,

\begin{thm}[Main result]
\label{Thm}
Let $(M,g)$ be a complete, non-compact, connected Riemannian manifold of dimension $n$ satisfying:
\begin{enumerate}
\item[\textup{(i)}] The ends of $M$ satisfy the Cardoso--Vodev assumption in Proposition~\ref{prop:cv}.
\item[\textup{(ii)}] There exists an even asymptotically hyperbolic manifold $(\tilde{M},\tilde{g})$ such that, on a neighborhood of the trapped set, $(M,g)$ and $(\tilde{M},\tilde{g})$ are isometric. In particular, two trapped sets are the same: $K=\tilde{K}$.
\item[\textup{(iii)}] The Laplace operator $\Delta_{\tilde{g}}$ on $\tilde{M}$ satisfies the estimate \eqref{eq:far-away}.
\end{enumerate}
Then for every $\chi\in C_c^\infty(M)$ and $\lambda$ large enough,
\begin{equation}\label{main:eq}
\|\chi(-\Delta_g-\frac{(n-1)^2}{4}-(\lambda\pm i0)^2)^{-1}\chi\|_{L^2(M)\to L^2(M)}\leqslant C\lambda^{-1}\log\lambda.
\end{equation}
\end{thm}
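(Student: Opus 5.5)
We will follow the gluing strategy of Datchev--Vasy: glue the trapping estimate \eqref{eq:far-away} for $\tilde M$ to a non-trapping estimate coming from the Cardoso--Vodev assumption on the ends.

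\textbf{Geometric splitting.} By (ii), fix an open set $U\supset K$ on which $(M,g)$ and $(\tilde M,\tilde g)$ are isometric. We may shrink $U$ to be relatively compact. Write $M=U\cup W$, where $W$ is a region that does not meet a neighborhood of $K$. We then build two model operators.
\begin{itemize}
\item[(a)] The first model is $\tilde M$ itself. By (iii), its cut-off resolvent satisfies $\|\tilde\chi R_{\tilde M}\tilde\chi\|\le C\lambda^{-1}\log\lambda$.
\item[(b)] The second model is a manifold $M_1$ obtained from $M$ by modifying the metric inside $U$ so that $M_1$ becomes non-trapping. Its ends are unchanged, so they still satisfy the Cardoso--Vodev assumption of Proposition~\ref{prop:cv}.
\end{itemize}
For $M_1$ we claim the non-trapping bound $\|\chi R_{M_1}\chi\|\le C\lambda^{-1}$, with the weighted version of Cardoso--Vodev. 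This comes from Proposition~\ref{prop:cv} combined with a standard escape-function/propagation argument on the compact part; we will take this as the black box supplied by the earlier proposition.

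\textbf{Parametrix.} Choose cutoffs $\chi_0,\chi_1$ with the following properties:
\begin{itemize}
\item $\chi_0+\chi_1=1$;
\item $\chi_0$ is supported in $U$ and equals $1$ near $K$;
\item $\chi_1$ is supported where $M=M_1$;
\item $\psi_j$ equals $1$ on $\supp\chi_j$, with $\psi_0$ supported in $U$.
\end{itemize}
Set
\[
G=\psi_0R_{\tilde M}\chi_0+\psi_1R_{M_1}\chi_1 .
\]
Then $P G=I+A_0+A_1$, where $P=-\Delta_g-\frac{(n-1)^2}{4}-\lambda^2$ and $A_j=[P,\psi_j]R_j\chi_j$. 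The commutator $[P,\psi_j]$ is a first-order operator supported where $\nabla\psi_j\ne0$, which lies away from $\supp\chi_j$ and away from $K$.

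\textbf{Killing the error.} The naive sizes are $\|A_0\|\sim\log\lambda$ and $\|A_1\|\sim 1$, so $A_j$ are not small. As in Datchev--Vasy, we iterate once and use semiclassical propagation. Along a backward bicharacteristic, $R_j\chi_j$ restricted to $\supp\nabla\psi_j$ is microlocally concentrated on trajectories coming from $\supp\chi_j$ via the outgoing flow. By choosing the cutoff geometry so that $\supp\nabla\psi_1\cap\supp\chi_0$ and $\supp\nabla\psi_0\cap\supp\chi_1$ satisfy the Datchev--Vasy non-return condition, we get the following:
\begin{itemize}
\item $A_1A_0=O(\lambda^{-\infty})$ and $A_0A_0=A_1A_1=0$, by disjointness of supports;
\item $A_0A_1=O(\lambda^{-\infty})$, by propagation of singularities for outgoing resolvents (outgoing wavefront relations).
\end{itemize}
Hence $PG(I-A_0-A_1)=I+O(\lambda^{-\infty})$, and for $\lambda$ large we can invert by Neumann series. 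Since the limits $\pm i0$ are handled by working in a strip and letting $\epsilon\to0$, we obtain
\[
R_M=G(I-A_0-A_1)(I+O(\lambda^{-\infty}))^{-1}.
\]
Since the $O(\lambda^{-\infty})$ error is sandwiched between operators of at most polynomial norm, this is legitimate.

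\textbf{Conclusion.} Estimating term by term, with cutoffs $\chi$ inserted, each piece is bounded by products of $\lambda^{-1}\log\lambda$ factors from $\tilde M$ and $\lambda^{-1}$ factors from $M_1$. The commutators contribute a factor $\lambda$. This gives terms such as
\[
\chi\psi_0R_{\tilde M}\chi_0[P,\psi_1]R_{M_1}\chi_1\chi .
\]
This term needs care: $\lambda^{-1}\log\lambda\cdot\lambda\cdot\lambda^{-1}$ is fine, so all such terms are $O(\lambda^{-1}\log\lambda)$. A double commutator product would give $\log^2$, but that product is $O(\lambda^{-\infty})$ by the previous step.

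\textbf{Main obstacle.} We expect the main difficulty to be the microlocal step $A_0A_1=O(\lambda^{-\infty})$. This requires semiclassical outgoing propagation estimates for $R_{\tilde M}$ uniformly near infinity, using evenness, and for $R_{M_1}$ on Cardoso--Vodev ends, where no evenness is available. The plan is to arrange all supports inside the compact region where both metrics agree with $g$, so that only interior propagation of singularities is needed, combined with the a priori polynomial bounds.
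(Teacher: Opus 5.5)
There is a genuine gap at the very first step: the non-trapping model $M_1$ obtained by ``modifying the metric inside $U$'' does not exist in general, because trapping is often forced by topology. If $\pi_1(M)\neq 1$, take any metric that agrees with $g$ outside a compact set, and minimize length in a nontrivial free homotopy class of loops. By \eqref{CV-h-deriv} the ends are expanding, so on them the map $r\mapsto\min(r,r_0)$ does not increase length. Hence minimizing sequences stay in a compact set, and you obtain a closed geodesic, which is trapped. This already happens for a hyperbolic cylinder, or for any convex cocompact surface other than $\mathbb H^2$. Datchev--Vasy avoid this with complex absorbing potentials, or one can change the topology as with the discs of Section~\ref{sec:comparison}. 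The same argument shows such a filling must be simply connected, so it need not exist for general ends in dimension $n$. With absorbing potentials, you would instead need a Cardoso--Vodev type bound for a non-selfadjoint model. Even for a genuinely non-trapping $M_1$, the bound $\|\chi R_{M_1}\chi\|\le C\lambda^{-1}$ for arbitrary compactly supported $\chi$ is not supplied by Proposition~\ref{prop:cv}, which only covers cutoffs supported in $M_a$. Passing to general $\chi$ requires propagation together with absorption of the $O(h^\infty)\|\tilde\chi u\|$ remainders, which is precisely the work in Proposition~\ref{prop:propagation}.

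The parametrix step is also incorrect as written. $A_1A_0$ is not killed by supports: $\nabla\psi_0$ lives where $\chi_0=0$, i.e.\ where $\chi_1=1$, so $\chi_1[P,\psi_0]=[P,\psi_0]$. Conversely, $A_0A_1$ is generically \emph{not} $O(\lambda^{-\infty})$. At $\supp\nabla\psi_1$, $R_{M_1}\chi_1$ produces covectors pointing into the core, and $R_{\tilde M}$ carries them through the core and out across $\supp\nabla\psi_0$. At best, under a non-return (convexity) condition at $\supp\nabla\psi_0$, you get $A_1A_0=O(\lambda^{-\infty})$ and
\[
R_M=G(I-A_0-A_1+A_0A_1)(I+O(\lambda^{-\infty}))^{-1}.
\]
The extra term $GA_0A_1$ contains a single $R_{\tilde M}$ and is harmless. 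However, that non-return condition needs $\supp\nabla\psi_0$ to lie where outgoing geodesics never come back. Meanwhile $\psi_0$ must be supported in $U$, and hypothesis (ii) only gives agreement with $\tilde M$ on \emph{some} neighborhood of $K$, with no convexity there.

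The paper sidesteps both problems: it builds neither a non-trapping model nor a parametrix. It uses the exact identity $\chi_0R_M\kappa=R_{\tilde M}\chi_0\kappa+R_{\tilde M}[-\Delta_M,\chi_0]R_M\kappa$. It applies propagation directly to $u=R_M\chi f$, pushing cutoffs away from $\pi(K)$ out to $M_a$, where Proposition~\ref{prop:cv} applies. The $O(\lambda^{-\infty})$ remainders are absorbed via the a priori bound $\|\chi_0u\|\lesssim\log\lambda\,(\|\psi_1u\|+\lambda^{-1}\|\chi f\|)$, which comes from the identity. Finally, the half-log bound of Lemma~\ref{lem:DV12} on $\tilde M$ controls the factor $\psi R_{\tilde M}\chi_0$.
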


By Kato's smoothing principle(see for \cite[Theorem 7.2]{dyatlov2019mathematical}), \eqref{main:eq} implies the local smoothing estimate:
\begin{equation}\label{eq:locsm}
    \int_{-\infty}^{\infty} \|\chi e^{-it\Delta_g} \beta(\sqrt{-\Delta_g}/\lambda) f\|_{L^2}^2 dt \leq C\lambda^{-1}\log \lambda \|f\|_{L^2}^2,\quad \beta\in C_c^{\infty}((1/2,2)), \, \chi\in C_c^{\infty}(M).
\end{equation}

In particular, Theorem~\ref{Thm} applies to non-even asymptotically hyperbolic surfaces.
\begin{cor}\label{cor:main}
For an asymptotically hyperbolic surface (as in Definition~\ref{def:AH}) with negative curvature, the estimates \eqref{main:eq} and \eqref{eq:locsm} hold. 
\end{cor}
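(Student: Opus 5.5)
The corollary should follow by checking the three hypotheses of Theorem~\ref{Thm} for a negatively curved asymptotically hyperbolic surface $(M,g)$ in the sense of Definition~\ref{def:AH}, possibly non-even. The estimate \eqref{eq:locsm} then comes from \eqref{main:eq} by Kato's smoothing principle, as noted after the theorem.

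For (i), I would write the metric near infinity in the normal form $g = \frac{dx^2 + h_x}{x^2}$, with $h_x$ a smooth family of metrics on the boundary circle. In the coordinate $r=-\log x$ the ends become $[r_0,\infty)\times \partial M$ with metric $dr^2 + e^{2r}h_{e^{-r}}$. I would then check the Cardoso--Vodev conditions of Proposition~\ref{prop:cv} directly: the radial derivative of the cross-sectional metric satisfies the required positivity (the expanding condition), and the derivatives of $h_{e^{-r}}$ in $r$ decay exponentially. Since we are in dimension two, $h_x = a(x,\theta)^2d\theta^2$, so these conditions reduce to scalar estimates on $a$ and its derivatives, which follow from smoothness of $a$ up to $x=0$. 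No evenness is needed here, because the Cardoso--Vodev conditions only involve finite-order behaviour.

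For (ii) and (iii), I need an even asymptotically hyperbolic negatively curved surface $\tilde M$ that agrees with $M$ near the trapped set $K$. Negative curvature, together with the convexity of the region $\{x<\epsilon\}$ for small $\epsilon$, implies that $K$ lies in a compact set $\{x\ge 2\epsilon\}$: geodesics entering $\{x<\epsilon\}$ escape, since $x$ restricted to a geodesic has no interior local minimum there. I would then define $\tilde g = g$ on $\{x\ge \epsilon\}$ and $\tilde g = \frac{dx^2 + \big(\phi(x) a(x,\theta)^2 + (1-\phi(x))a(0,\theta)^2\big)d\theta^2}{x^2}$ on $\{x<2\epsilon\}$, where $\phi$ is a cutoff equal to $1$ near $[\epsilon/... ,\infty)$, meaning $\phi=1$ for $x\ge \epsilon$ and $\phi=0$ for $x\le \epsilon/2$. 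Near $x=0$ this metric has the exact hyperbolic-funnel form, so it is even. Because $\tilde g$ coincides with $g$ on a neighbourhood of $K$, and the boundary convexity persists for $\tilde g$, the trapped sets agree: $\tilde K = K$.

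The main obstacle is preserving negative curvature for $\tilde g$, which is what (iii) needs via \cite{tao2021spectral}. The curvature of $\frac{dx^2+b^2d\theta^2}{x^2}$ is $-1 + O(x)$, with the error controlled by $x\,\partial_x \log b$ and $x^2\partial_x^2 b/b$. I would take $\phi(x)=\phi_0(x/\epsilon)$, so that the derivatives of the interpolation are $O(\epsilon^{-k})$ times differences of size $O(\epsilon)$. This makes the error terms $O(\epsilon)$ uniformly, and the curvature stays $\le -1/2$ for $\epsilon$ small. If \cite{tao2021spectral} only requires negative curvature near the trapped set rather than everywhere, this step becomes trivial. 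Theorem~\ref{Thm} then gives \eqref{main:eq}, and Kato smoothing gives \eqref{eq:locsm}.
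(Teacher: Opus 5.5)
Your proposal is correct and follows the paper's route. Hypothesis (i) is Proposition~\ref{ass:CV}. Hypothesis (ii) is the gluing of Proposition~\ref{prop:glue}, with the same interpolation $\tilde a^2=\phi a^2+(1-\phi)a(0,\theta)^2$ at scale $\epsilon$, the same convexity argument for $\tilde K=K$, and the same $O(\epsilon)$ curvature bound. Hypothesis (iii) comes from \cite{tao2021spectral}, and \eqref{eq:locsm} then follows by Kato smoothing.

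The one step you assert without proof is that convexity persists for $\tilde g$. It follows from the estimate you already have for the curvature: $-\partial_r\eta=(2-2x\partial_x\log b)\eta$, and $x\partial_x\log b=O(\epsilon)$ on the transition region. The paper checks this explicitly.
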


Using Corollary~\ref{cor:main}, we also generalize the results of \cite{Bounded_HSTZ} to asymptotically hyperbolic surfaces without the evenness assumption.
\begin{thm}\label{thm2}
    Suppose $M$ is an asymptotically hyperbolic surface with negative curvature. Then 
    \begin{enumerate}
        \item Assume $M$ has no resonance at the bottom of the continuous spectrum.
    Then we have the Strichartz estimate for $\frac{1}{p}+\frac{1}{q}=\frac{1}{2}, \,p,q\ge 2$ and $(p,q)\neq(2,+\infty)$, we have 
     \begin{equation*}
        \|e^{-it\Delta_g}u_0\|_{L^pL^q(\bbR\times M)}\leqslant C\| u_0\|_{L^2(M)}
    \end{equation*}
    for $u_0$ orthogonal to all the $L^2$ eigenfunctions.
    \item We have the spectral projection estimate for $\lambda\gg 1$ and $\delta\in(0,1)$:
    \begin{equation*}
        \|\mathbf{1}_{[\lambda,\lambda+\delta]}(\sqrt{-\Delta_g}) f\|_{L^q(M)}\leqslant C\lambda^{\mu(q)}\delta^{1/2}\|f\|_{L^2(M)},
    \end{equation*}
    where 
    \begin{equation}\label{mu}
        \mu(q)=\left\{\begin{aligned}
            &1/2-2/q,\quad q\geqslant 6,\\
            &\frac{1}{2}(1/2-1/q),\quad q\in (2,6].
        \end{aligned}
        \right.
    \end{equation}
    \end{enumerate}

\end{thm}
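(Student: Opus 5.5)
The proof of Theorem~\ref{thm2} will go through the frameworks of \cite{Bounded_HSTZ}, with the resolvent estimate for even metrics replaced by Corollary~\ref{cor:main}. I will check every place where evenness enters that argument and supply a replacement for each one.

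In \cite{Bounded_HSTZ}, the Strichartz and spectral projection estimates follow from three ingredients. The first is the high-frequency cut-off resolvent bound \eqref{eq:far-away} and the local smoothing estimate \eqref{eq:locsm} obtained from it. The second is control of the resolvent and spectral measure near the bottom of the continuous spectrum; this is where the no-resonance assumption at $\lambda=0$ is used. The third is dispersive and local estimates on the ends, which are perturbations of hyperbolic funnels or cusps.

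The plan has four steps.

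\textbf{Step 1 (high frequency).} Fix a cutoff $\chi$ equal to $1$ on a neighborhood of the trapped set. Write
\[
u=\chi u+(1-\chi)u.
\]
On the support of $\chi$, Corollary~\ref{cor:main} together with Kato smoothing gives \eqref{eq:locsm}. I will combine this with semiclassical Strichartz estimates on time scales of order $\lambda^{-1}\log\lambda$. This mirrors the argument of \cite{Bounded_HSTZ} near trapping, which uses only the hyperbolic dynamics and the curvature, not evenness. The loss $\log\lambda$ is absorbed exactly as in that paper.

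\textbf{Step 2 (ends).} On the support of $1-\chi$ the geodesic flow is non-trapping. I will use a parametrix on the ends of the form
\[
(1-\chi)e^{-it\Delta_g}=\text{(model propagator on the end)}+\text{errors},
\]
where the errors are supported in a compact region and are controlled by \eqref{eq:locsm}. In the even case this parametrix was built from the hyperbolic model. Without evenness, the asymptotically hyperbolic structure of Definition~\ref{def:AH} only gives $g$ close to the hyperbolic metric up to $O(\rho)$ terms at infinity. I therefore have two options. Either the model-end dispersive estimate tolerates such perturbations, because the ends satisfy the Cardoso--Vodev condition of Proposition~\ref{prop:cv}, which gives non-trapping resolvent bounds $C\lambda^{-1}$ on the ends with weights. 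Or I use weighted resolvent bounds on the ends, taken from Proposition~\ref{prop:cv}, as the input to a Christ--Kiselev/Keel--Tao argument.

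\textbf{Step 3 (low frequency).} The low-frequency part requires boundedness of the spectral measure near $0$. For the non-even case I will appeal to the meromorphic continuation near the continuous spectrum bottom, which still exists on a strip by Mazzeo--Melrose/Guillarmou. Only the behavior near $\lambda=0$ matters, so evenness is not needed there. The no-resonance assumption then gives uniform bounds on $\mathbf{1}_{[0,1]}$ spectral pieces.

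\textbf{Step 4 (spectral projections).} For part (2), I will follow the $TT^*$ argument of \cite{Bounded_HSTZ}. The resolvent bound $\lambda^{-1}\log\lambda$ gives $L^2$ local bounds on the spectral window of width $\delta$, with $\delta\geq(\log\lambda)^{-1}$ treated directly. Smaller $\delta$ is handled by the local smoothing bound and the $\delta^{1/2}$ gain, and the Sogge exponents $\mu(q)$ then follow.

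The main obstacle is Step 2: ensuring the end parametrix and dispersive estimates of \cite{Bounded_HSTZ} survive non-even perturbations. I would first try to isolate exactly which lemmas there use evenness. I expect it is only the use of \eqref{eq:far-away} and the meromorphic continuation, both of which Corollary~\ref{cor:main} and the Cardoso--Vodev estimates replace.
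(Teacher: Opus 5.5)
\textbf{There is a genuine gap: your proposal omits the one new ingredient the proof needs, the comparison surface $N$ and its low-energy spectral properties.}

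Your overall reduction matches the paper's. You run the argument of \cite{Bounded_HSTZ} with the even resolvent bound replaced by Corollary~\ref{cor:main}, which supplies the local smoothing estimates. The Ehrenfest-time Strichartz and spectral projection bounds near trapping need only bounded geometry and negative curvature. However, you misdiagnose where evenness enters.

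In \cite{Bounded_HSTZ} the ends are not handled by an end parametrix or by resolvent bounds on $M$. They are handled by comparison with an auxiliary surface $N$: a disjoint union of simply connected, negatively curved (hence nontrapping) AH surfaces with $g_N=g$ near infinity. On $N$, global Strichartz estimates \cite{chen2018resolvent} and nontrapping spectral projection bounds \cite{chen2018resolventII} are available, and the errors created by the cutoffs are controlled by local smoothing. The global-in-time result on $N$ requires that $-\Delta_{g_N}$ have no $L^2$ eigenvalue and no resonance at the bottom of the continuous spectrum. In \cite{Bounded_HSTZ} this was verified by Vasy's method, i.e.\ using evenness.

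This property of $N$ is not covered by the hypothesis of part (1), which concerns $M$, not $N$. So your Step 3 treats the wrong manifold. Your remark that the meromorphic continuation near $\lambda=0$ exists without evenness is correct, but it does not show that $N$ has no resonance there. Part (2) also runs through $N$, so it needs the construction too, even though it carries no spectral hypothesis.

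\textbf{Your Step 2 alternatives do not fill this gap.} Proposition~\ref{prop:cv} only gives $\|\chi R_M(\lambda\pm i0)\chi\|\le C\lambda^{-1}$ for $\lambda$ large and $\chi$ compactly supported far out in the end. A cut-off resolvent bound yields no dispersive estimate, which Keel--Tao needs. It also gives no low-frequency or global-in-time information on the unbounded end. Building a parametrix directly on non-even ends is exactly the work the comparison surface is meant to avoid.

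\textbf{What is needed is Section~\ref{sec:comparison}.}
\begin{enumerate}
\item Fill each end with a hyperbolic disc: $\tilde h=h_0+\chi(Rx)(h-h_0)$, normalized so that $h(0,y)=h_0(0)=1/4$.
\item Check that the curvature stays near $-1$ for $R$ large, since the correction terms in the curvature are $O(R^{-1})$.
\item Prove Lemma~\ref{on_eigen} perturbatively. Write $R_{\mathbb{H}^2}(\lambda)=(1+K(\lambda))R_{g_N}(\lambda)$ with $K=R_{\mathbb{H}^2}V$ and $V=\Delta_{g_N}-\Delta_{\mathbb{H}^2}$.
\item Use that $R_{\mathbb{H}^2}(\lambda)\colon x^aL^2\to x^{-a}H^2$ is holomorphic near $\lambda=0$ and uniformly bounded for $\lambda\in[0,i/2]$.
\item Use that $V$ has coefficients $O(x)$ supported in $\{x\le R^{-1}\}$, so $\|V\|_{x^{-a}H^2\to x^aL^2}=O(R^{a-1/2})$ for $a<1/2$.
\item A Neumann series then gives holomorphy of $R_{g_N}$ at $\lambda=0$ and along $[0,i/2]$. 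This means no resonance at the bottom and no eigenvalue in $[0,1/4]$, with $1/4$ itself excluded by \cite{bouclet2013absence}.
\end{enumerate}
With $N$ in hand, the rest of \cite{Bounded_HSTZ} goes through essentially as you describe.
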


\subsection{Related work}
The resolvent estimate \eqref{main:eq} has a long history and is closely related to the spectral gap. We first review the development of resolvent estimates. One of the first general results is due to Nicolas Burq in \cite{burq1998decroissance}, where he proved an exponential high-energy bound for the cutoff resolvent in a rather general trapping setting. After this, Cardoso--Vodev \cite{cardoso2002uniform} followed this idea and proved more refined resolvent estimates for infinite volume Riemannian manifolds under additional assumptions on the geometry near infinity. These results do not require a specific hyperbolic structure of the trapped set.

We next recall what is meant by a spectral gap. Resonances are the poles of the meromorphic continuation of the resolvent $R_M(\lambda)$. For even asymptotically hyperbolic manifolds, the resolvent extends meromorphically to the whole complex plane; see \cite{mazzeo1987meromorphic,guillarmou2005meromorphic,vasy2013microlocal}. We say that there is an essential spectral gap if there exists $\beta>0$ such that there are only finitely many resonances in the strip
\[
-\beta<\operatorname{Im}\lambda<0.
\]
The spectral gap is closely connected to cutoff resolvent estimates on the real line. If one has a resonance-free strip together with a polynomial resolvent bound in this strip, then the semiclassical maximum principle of \cite[Lemma 4.7]{burq2004smoothing} gives an improved cutoff resolvent estimate on the real axis. Thus, in this sense, a spectral gap together with a polynomial bound contains stronger information than a real-axis resolvent estimate alone.

For hyperbolic trapping, Nonnenmacher--Zworski \cite{nonnenmacher2009quantum} proved an essential spectral gap and a resolvent estimate under a topological pressure condition. A geometrically different setting is normally hyperbolic trapping. Wunsch--Zworski \cite{wunsch2011resolvent} proved a spectral gap and resolvent estimates for semiclassical operators with normally hyperbolic trapped sets. In their setting the trapped set is smooth and symplectic. Later, Nonnenmacher--Zworski \cite{nonnenmacher2015decay} weakened the regularity assumptions on the incoming and outgoing tails $\Gamma_\pm$ and removed the codimension assumption. Their result also applies to contact Anosov flows and gives exponential decay of correlations from the corresponding spectral gap. Dyatlov \cite{dyatlov2016spectral} then obtained the optimal size of the spectral gap in the smooth normally hyperbolic setting of \cite{wunsch2011resolvent}, together with a polynomial resolvent bound. In \cite{dyatlov2015resonance}, under $r$-normal hyperbolicity and an additional pinching condition, Dyatlov also obtained a resonance-free strip.

Another important development is the fractal uncertainty principle. For convex co-compact hyperbolic surfaces, the topological pressure condition used in \cite{nonnenmacher2009quantum} does not apply once the Hausdorff dimension $\delta$ of the limit set is at least $1/2$. Dyatlov--Zahl \cite{dyatlov2016spectralfup} introduced the fractal uncertainty principle in their study of spectral gaps and resolvent estimates for convex co-compact hyperbolic manifolds. Bourgain--Dyatlov \cite{bourgain2018spectral} then removed the pressure condition in dimension two and proved that every convex co-compact hyperbolic surface has an essential spectral gap. Vacossin \cite{vacossin2024spectral,vacossin2023resolvent} generalized \cite{bourgain2018spectral} to quantum monodromy maps and scattering outside convex obstacles in dimension two.  Tao \cite{tao2021spectral} applied the result of \cite{vacossin2024spectral} to prove an essential spectral gap together with resolvent estimates for even negatively curved asymptotically hyperbolic surfaces, without assuming constant curvature. For recent advances in higher-dimensional fractal uncertainty principle, see \cite{backus2025fractal,cohen2D,cohen2025fractal}.

We now turn to Strichartz and spectral projection estimates. On the real hyperbolic spaces $\bbH^n$, Anker--Pierfelice \cite{anker2009nonlinear} and Ionescu--Staffilani \cite{ionescu2009semilinear} independently proved global-in-time Strichartz estimates and applied them to nonlinear Schrödinger equations. Bouclet \cite{bouclet2011strichartz} studied more general asymptotically hyperbolic manifolds. He constructed a microlocal parametrix near infinity and proved local-in-time Strichartz estimates without loss outside a compact set. In the nontrapping case, this also gives Strichartz estimates without loss on the whole manifold.

The first lossless Strichartz estimate in the presence of trapping is due to Burq--Guillarmou--Hassell \cite{burq2010strichartz}. They proved Strichartz estimates without loss in the presence of hyperbolic trapping under a pressure condition. Their proof combines logarithmic-time dispersive estimates with local smoothing. For convex co-compact hyperbolic surfaces without the pressure condition, Wang \cite{wang2019strichartz} proved a Strichartz estimate with $\epsilon$ loss of derivative. On the other hand, Chen--Hassell \cite{chen2018resolventII} constructed the high-energy resolvent and studied the spectral measure on nontrapping asymptotically hyperbolic manifolds, obtaining restriction and spectral multiplier estimates. Based on this spectral measure construction, Chen proved global-in-time Strichartz estimates without loss in \cite{chen2018resolvent}.

Spectral projection estimates were developed in parallel with these Strichartz estimates. Anker--Germain--L\'eger \cite{anker2023spectral} studied spectral projectors on geometrically finite hyperbolic surfaces of infinite volume. In the convex co-compact case, they obtained estimates which are optimal in the window size and frequency up to subpolynomial losses. Their proof uses the Bourgain--Dyatlov resolvent estimate together with improved Strichartz and smoothing estimates. For compact manifolds of nonpositive or negative curvature, Blair--Huang--Sogge \cite{blair2024improved,blair2024strichartz} and Huang--Sogge \cite{huang2025curvature,huang2025strichartz} obtained logarithmic improvements of spectral projection and Strichartz estimates. In particular, they proved lossless frequency-localized Strichartz estimates on time intervals of Ehrenfest size $\lambda^{-1}\log\lambda$.

Huang--Sogge--Tao--Zhang \cite{Bounded_HSTZ} extended these ideas to even negatively curved asymptotically hyperbolic surfaces. They use local smoothing and half-localized $L^2\to L^q$ resolvent estimates to give lossless Strichartz estimates and optimal spectral projection estimates on negatively curved even asymptotically hyperbolic surfaces, in particular on all convex co-compact hyperbolic surfaces, without the pressure condition. If there is no resonance at the bottom of the continuous spectrum, the Strichartz estimate is global in time after projecting away from the $L^2$ eigenfunctions. More recently, Zhang \cite{zhang2026strichartz} used a similar gluing and background-manifold argument for asymptotically conic surfaces. She obtained lossless unit-time Strichartz estimates and spectral projection estimates for surfaces with Euclidean ends when a sufficiently large neighborhood of the trapped set has negative curvature.














\subsection{Outline of the proof}
We will prove the estimate for the localized resolvent
\[
\chi_L R_M(\lambda\pm i0)\chi_R,\quad \chi_L,\chi_R\in C_c^{\infty}(M).
\]
Let \(K\) denote the trapped set defined in
\eqref{eq:trapped-set}, and let
\(\chi_0\in C_c^\infty(M)\) be a fixed cutoff satisfying
\(\chi_0=1\) in a neighborhood of \(\pi(K)\).
The argument is organized according to the positions of the supports of
\(\chi_L\) and \(\chi_R\) relative to the trapped region \(\pi(K)\).

\begin{itemize}
    \item
    Suppose first that both \(\chi_L\) and \(\chi_R\) are supported away
    from \(\pi(K)\). We claim that
    \begin{equation}\label{eq:outline-1}
         \|\chi_L R_M(\lambda\pm i0)\chi_R\|_{L^2\to L^2}\leq C\lambda^{-1}.
    \end{equation}
    By the propagation estimate in
    Proposition~\ref{prop:propagation}, there exists a cutoff
    \(\chi_\infty\), supported in the nontrapping region sufficiently far
    from \(\pi(K)\), such that
    \[
    \|\chi_L R_M(\lambda\pm i0)\chi_R\|_{L^2\to L^2}
    \lesssim
    \|\chi_\infty R_M(\lambda\pm i0)\chi_R\|_{L^2\to L^2}
    +\lambda^{-1}.
    \]
    Applying the same propagation estimate to the adjoint resolvent,
    using
    \[
    R_M(\lambda\pm i0)^*=R_M(\lambda\mp i0),
    \]
    reduces the problem further to estimating
    \[
    \|\chi_\infty
      R_M(\lambda\pm i0)
      \chi_{\infty}\|_{L^2\to L^2},
    \]
    where \(\chi_\infty\) is supported far away from $\pi(K)$. This term is controlled by the
    Cardoso--Vodev estimate stated in
    Proposition~\ref{prop:cv}.

    \item
    We next consider the case in which one cutoff is supported away from
    the trapped region while the other one may intersect it. By symmetry,
    it is enough to assume that
    \[
    \supp\chi_L\cap\pi(K)=\emptyset.
    \]
    We claim that in this case
    \begin{equation}\label{eq:outline-2}
         \|\chi_L R_M(\lambda\pm i0)\chi_R\|_{L^2\to L^2}\leq C\lambda^{-1}\sqrt{\log \lambda}.
    \end{equation}
    Decomposing
    \[
    \chi_R=\chi_0^2\chi_R+(1-\chi_0^2)\chi_R,
    \]
    we see that the second term is covered by the previous case \eqref{eq:outline-1}.
    Therefore, after modifying the cutoffs harmlessly, it suffices to
    treat the model case \(\chi_R=\chi_0^2\).

    We then compare the resolvent on \(M\) with the resolvent on the even
    manifold \(\tilde M\). The gluing identity
    \eqref{thm_1:keyID} gives
    \[
    \chi_L R_M(\lambda\mp i0)\chi_0^2
    =
    \chi_L R_M(\lambda\mp i0)\psi
    [-\Delta_M,\chi_0]
    \psi R_{\tilde M}(\lambda\mp i0)\chi_0,
    \]
    where \(\psi\) is a fixed cutoff supported away from \(\pi(K)\).

    The first factor satisfies
    \[
    \|\chi_LR_M(\lambda\mp i0)\psi\|_{L^2\to L^2}
    \lesssim \lambda^{-1}
    \]
    by the previous step, since both \(\chi_L\) and \(\psi\) are
    supported in the nontrapping region. Moreover,
    \[
    [-\Delta_M,\chi_0]\colon H^1_\lambda\to L^2
    \]
    has norm \(O(\lambda)\), where \(H^1_\lambda\) defined in
    \eqref{eq:sobolev}. Finally, the estimate for the even resolvent,
    together with Lemma~\ref{appendix:h1} and
    Lemma~\ref{lem:DV12}, controls
    \[
    \|\psi R_{\tilde M}(\lambda\mp i0)\chi_0\|_{ L^2\to H^1_\lambda} \leq C\lambda^{-1}\sqrt{\log \lambda}.
    \]
    Combining these three estimates gives the desired bound \eqref{eq:outline-2} in the
    mixed trapped--nontrapped case.

    \item
    It remains to consider the case in which both cutoffs meet the
    trapped region, and we want to show that
    \begin{equation}\label{eq:outline-3}
         \|\chi_L R_M(\lambda\pm i0)\chi_R\|_{L^2\to L^2}\leq C\lambda^{-1}\log \lambda.
    \end{equation}
    Using the same cutoff decomposition as above, we may
    reduce to the model case
    \[
    \chi_L=\chi_0^2,
    \quad
    \chi_R=\chi_0.
    \]
    In this case, the gluing identity takes the form
    \[
    \chi_0R_M\chi_0^2
    =
    \chi_0^2R_{\tilde M}\chi_0
    +
    \chi_0R_{ M}\psi
    [-\Delta_M,\chi_0]
    \psi R_{\tilde M}\chi_0.
    \]
    The first term on the right-hand side is controlled directly by the
    corresponding resolvent estimate on the even manifold
    \(\tilde M\), obtained from
    \cite{tao2021spectral} and Lemma~\ref{lem:XZ25-resolvent}.
    In the second term, the factor
    \(\psi R_M\chi_0\) is covered by \eqref{eq:outline-2},
    while the remaining factors are estimated exactly as in the
    preceding step. This completes the proof of the localized resolvent
    estimate \eqref{eq:outline-3}.
\end{itemize}

To prove Theorem~\ref{thm2}, we combine the resolvent estimate
\eqref{main:eq} with the abstract Strichartz and spectral-projection
machinery developed in \cite{Bounded_HSTZ}. The argument of \cite{Bounded_HSTZ} requires an auxiliary negatively
curved surface \((N,g_N)\), obtained by filling in the ends of \(M\)
with model hyperbolic discs, such that the Laplacian $\Delta_N$ has no eigenvalue and has no resonance at the bottom of the continuous spectrum. In \cite{Bounded_HSTZ}, this
property is established under the assumption that \(M\) is even. In
that setting, it follows readily from Vasy's method. Since Vasy's
method is not directly applicable to the non-even manifolds considered
here, we verify the required spectral property by a direct computation
in Section~\ref{sec:comparison}. Once this auxiliary construction has
been established, Theorem~\ref{thm2} follows by applying the abstract
results of \cite{Bounded_HSTZ} to the resolvent bound
\eqref{main:eq}.

This article is organized as follows. In Section~\ref{sec:prelim} we review the basic definitions and the results we use later. In Section~\ref{sec:CV} we verify that asymptotically hyperbolic ends satisfy the Cardoso--Vodev condition. In Section~\ref{sec:glue} we show that an asymptotically hyperbolic end can be glued into an \textit{even} asymptotically hyperbolic surface, and in Section~\ref{sec:thm1} we prove Theorem~\ref{Thm}. Section~\ref{sec:comparison} constructs the comparison surface $(N,g_N)$ and establishes the absence of eigenvalues and of resonances at the bottom of its continuous spectrum; this is the only new ingredient for Theorem~\ref{thm2}, and we will talk a little bit more in Section~\ref{sec:thm2}.

\subsection*{Acknowledgements}
The author would like to express his sincere gratitude to Zhongkai Tao for many illuminating discussions, from which the key idea in the proof of Theorem~\ref{Thm} emerged, and for his generous guidance throughout this project.
\section{Preliminaries}\label{sec:prelim}

\subsection{The Cardoso--Vodev result}
First, we recall some facts and assumptions in \cite{cardoso2002uniform}. Assume that $M=M_0\cup M_{\infty}$, where $M_0$ is compact  and $M_{\infty}$ is diffeomorphic to $[0,1)\times \partial M$. We introduce another parameter: set $r=-\log x$, then we assume on $(r,\theta)\in[0,+\infty)\times\partial M$, the metric $g$ satisfy \begin{equation}\label{an-para}
g\mid_{M_{\infty}} (r,\theta)=dr^2+\sigma(r) =dr^2+\sigma_{ij}(r,\theta) d\theta_id\theta_j,\quad \sigma(r) = e^{2r}h(e^{-r}).
\end{equation}
Let $\sigma^{ij}(r,\theta)$ be the inverse of $\sigma_{ij}(r,\theta)$, and write
$\eta(r,\theta,\xi)=\sum_{i,j}\sigma^{ij}(r,\theta)\xi_i\xi_j$. Let $p=(\det \sigma_{ij})^{-1/2}$ and 
\[
q=\frac{(\partial_r p)^2}{(2p)^2}+\frac{1}{(2p)^2}\sum_{i,j} \sigma^{ij}(\partial_{\theta_i}p)(\partial_{\theta_j}p)+\frac{1}{2}p\Delta_{M_{\infty}}(p^{-1}).
\]
We write $R_M(\lambda)=(-\Delta_g-\frac{(n-1)^2}{4}-(\lambda\pm i0)^2)^{-1}$ for the limiting resolvent (the choice of $\pm$ is fixed throughout). 
We recall the following theorem of Cardoso--Vodev \cite{cardoso2002uniform}:
\begin{prop}\label{prop:cv}
  Suppose $(M,g)$ is a complete manifold as above. Suppose there exist constants $C,\delta_0,r_0>0$ such that for every $r\geq r_0$, $\theta\in \partial M$, $\xi\in T^*_\theta \partial M$:
\begin{align}
|q(r,\theta)|&\leq C, \label{CV-q-bound}\\
\frac{\partial q}{\partial r}(r,\theta)&\leq C\,r^{-1-\delta_0}, \label{CV-q-deriv}\\
-\frac{\partial \eta}{\partial r}(r,\theta,\xi)&\geq \frac{C}{r}\,\eta(r,\theta,\xi). \label{CV-h-deriv}
\end{align}
Then there exists $a>r_0$ such that for any compactly supported function $\chi\in C_c^{\infty}(M)$ with $\supp \chi \subset M_a:=\{r>a\}$, we have 
\begin{equation}
    \|\chi R_M(\lambda\pm i0)\chi\|_{L^2(M)\to L^2(M)}\leq C\lambda^{-1}.
\end{equation}
\end{prop}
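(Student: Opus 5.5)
Since Proposition~\ref{prop:cv} is quoted from \cite{cardoso2002uniform}, we only sketch how we would prove it. The plan is a one-dimensional energy (positive commutator) argument in the radial variable $r$ on the end $M_\infty\cong[0,\infty)_r\times\partial M$.

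\emph{Reduction.} We would first conjugate by $p^{1/2}$, that is, by the square root of the density ratio between $dr\,d\mathrm{vol}_{\sigma(r)}$ and a fixed measure on $\partial M$. On $M_\infty$ this puts the operator in the form
\[
p^{-1/2}\Bigl(-\Delta_g-\tfrac{(n-1)^2}{4}\Bigr)p^{1/2}=-\partial_r^2+\Lambda(r)+\tilde q(r,\theta),
\]
where $\Lambda(r)\ge0$ is the tangential operator with principal symbol $\eta(r,\theta,\xi)$. The potential $\tilde q$ is $q$ up to the constant shift, and this is exactly why $q$ appears in the hypotheses.

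\emph{Energy functional.} Let $u=R_M(\lambda\pm i0)\chi f$ and set $v=p^{-1/2}u$. For each slice $\{r\}\times\partial M$ we would consider
\[
E(r)=\|\partial_rv\|^2-\langle\Lambda(r)v,v\rangle+\langle(\lambda^2-\tilde q)v,v\rangle .
\]
Differentiating and using the equation $(-\partial_r^2+\Lambda+\tilde q-\lambda^2)v=\chi f$ gives
\[
E'(r)=-\langle\Lambda'(r)v,v\rangle-\langle(\partial_r\tilde q)v,v\rangle-2\operatorname{Re}\langle\chi f,\partial_rv\rangle .
\]
Hypothesis \eqref{CV-h-deriv} gives $-\Lambda'\ge \tfrac{C}{r}\Lambda$ up to lower-order terms. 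Hypothesis \eqref{CV-q-deriv} makes the $\partial_r\tilde q$ term an error of size $r^{-1-\delta_0}\|v\|^2$. Hypothesis \eqref{CV-q-bound} makes $\lambda^2-\tilde q\sim\lambda^2$ once $\lambda$ is large.

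\emph{Weighted integration.} Next we would multiply by a weight $\varphi(r)$ that is increasing, with $\varphi'\sim r^{-1-\delta}$ at infinity, and integrate over $r\ge a$. At infinity, the outgoing radiation condition $\partial_rv\mp i\lambda v\to0$ controls the boundary term: it shows $E(r)$ tends to a nonnegative quantity whose contribution has the favourable sign. In the bulk, the terms $\varphi'(|\partial_rv|^2+\lambda^2|v|^2)$ have a good sign. The bad term $-\varphi'\Lambda$ is to be absorbed by $\varphi\cdot\tfrac{C}{r}\Lambda$, which requires $r\varphi'\lesssim\varphi$. The target is a weighted estimate
\[
\lambda\|\langle r\rangle^{-(1+\delta)/2}v\|_{L^2(r>a)}\lesssim\|\langle r\rangle^{(1+\delta)/2}\chi f\|,
\]
together with Cauchy--Schwarz on the source term $\operatorname{Re}\langle\chi f,\partial_rv\rangle$. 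Since $\chi$ has compact support in $M_a$, this yields $\|\chi u\|\lesssim\lambda^{-1}\|f\|$.

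\emph{Main obstacle: the inner boundary $r=a$.} We expect the hardest step to be the inner boundary, because $u$ does not vanish there and the core may be trapping, so nothing is known a priori about $u$ at $r=a$. The requirement $r\varphi'\lesssim\varphi$ forbids taking $\varphi(a)=0$, so a boundary term $\varphi(a)E(a)$ survives, and $E(a)$ has no definite sign because of $-\langle\Lambda v,v\rangle$. The first approach we would try is to choose $\varphi$ so that this boundary term enters with a favourable sign in the identity. Concretely, we would integrate from $a$ to $\infty$ and aim to obtain $\|\partial_rv\|^2+\lambda^2\|v\|^2$ at $r=a$ with a good sign. The weight would be chosen only after $a$ is fixed, taking $a$ large so that the $r^{-1}$ gain from \eqref{CV-h-deriv} dominates all error terms for $r>a$.

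If this does not close, the fallback is the Cardoso--Vodev Carleman approach. There one uses an exponential weight in $r$ and accepts an $e^{C\lambda}$ bound near $r=a$. One then exploits the fact that $\operatorname{supp}\chi\subset\{r>a\}$ is strictly separated from the core. The semiclassical decay of the weight across $[a,\operatorname{supp}\chi]$ then makes the inner boundary contribution $O(\lambda^{-\infty})$ relative to the main term, which leaves the $\lambda^{-1}$ bound.
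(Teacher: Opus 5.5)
The paper gives no proof of this proposition; it is quoted from \cite{cardoso2002uniform}, so your sketch has to stand on its own. Its bulk part is fine: the conjugated operator $-\partial_r^2+\Lambda(r)+\tilde q$, the formula for $E'(r)$, the way \eqref{CV-q-bound}--\eqref{CV-h-deriv} enter, and the constraint $r\varphi'\lesssim\varphi$ are all correct. One sign is off, though. Integrating $(\varphi E)'$ over $[a,\infty)$ bounds the bulk by $\varphi(\infty)E(\infty)-\varphi(a)E(a)$ plus source terms. So $E(\infty)\ge 0$ sits on the upper-bound side and is not favourable. It must be controlled through the flux identity, $\lim_{r\to\infty}E(r)\lesssim\lambda\,|\langle\chi f,u\rangle|$, which is harmless.

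The genuine gap is the inner boundary at $r=a$. You flag it but do not resolve it, and neither of your routes closes it. Since $-E(a)\le\langle(\Lambda(a)+\tilde q(a)-\lambda^2)_+v(a),v(a)\rangle$, the term $\|\partial_r v(a)\|^2+\lambda^2\|v(a)\|^2$ you aim for already enters with the good sign. The whole difficulty is the tangentially elliptic part $\varphi(a)\langle(\Lambda(a)+\tilde q(a)-\lambda^2)_+v(a),v(a)\rangle$, and no choice of $\varphi$ with $\varphi(a)>0$ removes it. In fact no argument using only the equation on $\{r>a\}$, the source and the radiation condition can succeed. Add to $u$ an outgoing solution of the homogeneous equation on $\{r>a\}$ whose Dirichlet data at $r=a$ lie in that tangentially elliptic range. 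This preserves all of those properties, yet makes $\|\chi u\|$ arbitrarily large.

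Information from $\{r\le a\}$ must therefore enter. One way is an a priori (exponential) bound for $u$ near the core, combined with the following observation: since $\eta$ decreases in $r$ by \eqref{CV-h-deriv}, components that are tangentially elliptic at $r=a$ are elliptic on all of $[r_0,a]$ and are damped across it. One then chooses $a$ large in terms of the a priori constant. This is where the freedom in $a$ is really used, not merely to absorb bulk errors.

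Your fallback points this way but misplaces the damping. The gap between $a$ and $\supp\chi$ is fixed only after $a$ and may be arbitrarily small, so the decay must happen between the core and $r=a$. Moreover, a weight increasing in $r$ cannot give a Carleman estimate valid for outgoing propagating waves; test it on a cut-off outgoing wave $\tilde\chi(r)e^{i\lambda r}$. Such a weight can therefore only be applied to the tangentially elliptic part. Finally, the $e^{C\lambda}$ bound you would invoke is itself a substantial theorem (Burq, Cardoso--Vodev) that must be proved or cited.
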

\begin{rem}\label{coordinate}
    In the following context, we use $(x,y)$ to represent the coordinate $[0,1)\times \partial M$ and $(r,\theta)$ to represent the coordinate $[r_0,+\infty)\times \partial M$.
\end{rem}

\subsection{Asymptotically hyperbolic manifold}\label{sec:CV}

\begin{defi}
\label{def:AH}
Let $\overline{M}$ be a smooth $n$-dimensional manifold with non-empty boundary $\partial{M}$, and $M=\operatorname{Int}(\overline{M})$. A complete Riemannian metric $g$ on $M$ is asymptotically hyperbolic (AH) if there exists a boundary defining function $x\in C^\infty(\overline{M})$ (i.e., $x\geqslant 0$, $x=0$ on $\partial{M}$, and $dx\neq 0$ on $\partial{M}$) such that, in a collar neighborhood of $\partial{M}$,
\begin{equation}\label{eq:ah}
g=\frac{dx^2+h(x,x',dx')}{x^2},
\end{equation}
where $h(x,x',dx')$ is a smooth one-parameter family of metrics on $\partial{M}$. We say $g$ is even asymptotically hyperbolic if $h(x,x',dx')$ admits a Taylor expansion at $x=0$ in even powers:
\[
h(x,x',dx')\sim h_0+x^2 h_2+x^4 h_4+\cdots, \quad x\to 0^+.
\]
Equivalently, $h$ extends to an even smooth function of $x$ across $x=0$.
\end{defi}

The convexity conditions in Proposition~\ref{prop:cv} hold for asymptotically hyperbolic manifolds.
\begin{prop}
\label{ass:CV}
Let $(M,g)$ be an asymptotically hyperbolic manifold of dimension $n\geq 2$. Then \eqref{CV-q-bound}, \eqref{CV-q-deriv} and \eqref{CV-h-deriv} hold.
\end{prop}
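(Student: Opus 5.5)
The plan is a direct computation in the coordinates of Remark~\ref{coordinate}. The key structural fact is this: with $x=e^{-r}$, every quantity in Proposition~\ref{prop:cv} is $e^{kr}$ times a function smooth in $(x,\theta)$ up to $x=0$. Since $\partial_r=-x\partial_x$, each $r$-derivative of a smooth function of $x$ produces a factor $x=e^{-r}$. Exponential decay of this kind is far stronger than the polynomial rates $r^{-1}$ and $r^{-1-\delta_0}$ that are required. Note that evenness plays no role here; only smoothness of $h$ in $x$ is used.

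\textbf{Step 1: condition \eqref{CV-h-deriv}.} From \eqref{an-para}, $\sigma_{ij}=e^{2r}h_{ij}(e^{-r},\theta)$, so $\sigma^{ij}=e^{-2r}h^{ij}(x,\theta)$ and
\[
\eta=e^{-2r}h^{ij}(x,\theta)\xi_i\xi_j .
\]
Differentiating gives
\[
\partial_r\eta=-2\eta - e^{-2r}\,x\,(\partial_xh^{ij})(x,\theta)\xi_i\xi_j .
\]
By compactness of $\partial M$ and uniform equivalence of the metrics $h(x)$ for $x\in[0,1/2]$, the last term is bounded by $Ce^{-r}\eta$. Hence
\[
-\partial_r\eta\ge (2-Ce^{-r})\eta\ge \eta\ge \tfrac{C}{r}\eta
\]
for $r\ge r_0$ large, which is \eqref{CV-h-deriv}.

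\textbf{Step 2: structure of $p$ and $q$.} Set $d(x,\theta)=(\det h_{ij}(x,\theta))^{-1/2}$, computed in a fixed coordinate chart on $\partial M$; this is smooth and positive up to $x=0$. Then
\[
p=e^{-(n-1)r}d(x,\theta), \qquad p^{-1}=e^{u},\quad u=(n-1)r-\log d(e^{-r},\theta).
\]
Consequently $\partial_ru=(n-1)+x\,\partial_x\log d$ is smooth in $(x,\theta)$. Next, $\partial_r^2u=-x\partial_x(\partial_ru)$ and all $\theta$-derivatives of $u$ are smooth in $(x,\theta)$ as well.

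Write $\Delta_{M_\infty}$ in these coordinates, with Riemannian density $p^{-1}\,dr\,d\theta$:
\[
\Delta f=p\,\partial_r(p^{-1}\partial_rf)+p\,\partial_{\theta_i}(p^{-1}\sigma^{ij}\partial_{\theta_j}f).
\]
Substituting gives the radial part of $\tfrac12 p\Delta(p^{-1})$ as $\tfrac12(\partial_r^2u+2(\partial_ru)^2)$. The angular part carries the factor $e^{-2r}h^{ij}$ times derivatives of $u$, $\log d$ and $h$, so it is $O(e^{-2r})$ in $C^k$. Likewise, the first two terms of $q$ equal $\tfrac14(\partial_ru)^2$ and $O(e^{-2r})$ respectively.

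Altogether,
\[
q(r,\theta)=Q(e^{-r},\theta)
\]
with $Q$ a smooth function on $[0,1/2]\times\partial M$, and $Q(0,\theta)$ equal to a dimensional constant ($\tfrac34(n-1)^2$ with the normalization above).

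\textbf{Step 3: conclusion.} Since $Q$ is continuous on the compact set $[0,1/2]\times\partial M$, $q$ is bounded, which gives \eqref{CV-q-bound}. Moreover
\[
\partial_rq=-x\,\partial_xQ=O(e^{-r})\le Cr^{-1-\delta_0}
\]
for any $\delta_0>0$ and $r\ge r_0$, which gives \eqref{CV-q-deriv}.

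\textbf{Main obstacle.} The main work is the bookkeeping in Step 2, specifically computing $p\Delta(p^{-1})$ correctly in the $(r,\theta)$ coordinates. One must check that no term grows like $e^{r}$: the growth $e^{(n-1)r}$ of $p^{-1}$ must cancel against the factor $p$. The substitution $p^{-1}=e^{u}$ makes this cancellation automatic and reduces everything to derivatives of $u$, which are smooth in $x$. The choice of local coordinates on $\partial M$ is harmless, because $q$ is invariantly defined up to the chart, and compactness of $\partial M$ gives uniform constants.
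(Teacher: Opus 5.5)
Your proposal is correct and follows essentially the same route as the paper. Both are a direct computation in the $(r,\theta)$ collar coordinates: you show $-\partial_r\eta\ge(2-Ce^{-r})\eta$, and you show each term of $q$ is bounded with $r$-derivative $O(e^{-r})$. Writing $q=Q(e^{-r},\theta)$ via $p^{-1}=e^{u}$ is a tidier packaging of the paper's term-by-term estimates, and you use $p=(\det\sigma_{ij})^{-1/2}$ consistently, whereas the paper's proof silently switches to $(\det\sigma_{ij})^{1/2}$.

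There is one harmless arithmetic slip. With your conventions the radial contributions are $\tfrac14 u'^2+\tfrac12 u''+u'^2$, so $Q(0,\theta)=\tfrac54(n-1)^2$, not $\tfrac34(n-1)^2$; this does not affect the argument.
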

\begin{proof}
From \eqref{an-para}, we know for $h$ in \eqref{eq:ah}
\begin{equation}\label{sigma^{ij}}
\sigma_{ij}(r,\theta)=e^{2r}h_{ij}(e^{-r},\theta), \quad
\sigma^{ij}(r,\theta)=e^{-2r}h^{ij}(e^{-r},\theta),
\end{equation}
where $h_{ij}(x,\theta)$ is smooth up to $x=0$ and $h^{ij}$ is its inverse. By Taylor's theorem,
\begin{equation}
    \partial_\theta^\alpha(h^{ij}(x,\theta)-h^{ij}(0,\theta))=
    x\int_0^1\partial_x\partial_\theta^\alpha
h^{ij}(tx,\theta)\,dt.
\end{equation}
Hence, for $r$ large, we have
\begin{equation}\label{eq:asym}
    \partial_r^k\partial_\theta^\alpha\bigl(h^{ij}(e^{-r},\theta)-h^{ij}(0,\theta)\bigr)\le C_{\alpha,k}\, e^{-r},\quad r\to\infty,
\end{equation}
for every $k\in\bbZ$ and $\alpha$ multiple index. Similarly, by the expression in \eqref{sigma^{ij}},
we have
\[
|\partial_r^k\partial_\theta^\alpha\sigma^{ij}(r,\theta)|
\leq C_{k,\alpha}e^{-2r}.
\]

By \eqref{sigma^{ij}}, we have
\[
\eta(r,\theta,\xi)=\sum_{i,j}\sigma^{ij}(r,\theta)\xi_i\xi_j=e^{-2r}\sum_{i,j}h^{ij}(e^{-r},\theta)\xi_i\xi_j.
\]
We know 
\[
\partial_r \eta(r,\theta,\xi)=
-2\eta+e^{-2r} \sum_{i,j}(\partial_rh^{ij})\xi_i\xi_j.
\]
hence
\[
|\partial_r h^{ij}(e^{-r},\theta)|=|e^{-r}(\partial_x h^{ij})(e^{-r},\theta)|\le Ce^{-r},
\]
which gives
\[
|e^{-2r}\sum_{i,j}(\partial_rH^{ij})\xi_i\xi_j|
\le Ce^{-3r}|\xi|^2\leq
Ce^{-r}\eta.
\]
so if we choose $r$ large enough, we have
\[
-\partial_r\eta \geq (2-Ce^{-r})\eta\ge \eta\ge \frac{C}{r}\eta.
\]
which is the \eqref{CV-h-deriv}.

For $p(r,\theta)$, we have
\[
p(r,\theta)=(\det \sigma_{ij})^{1/2}=e^{r\cdot (n-1)}\omega(r,\theta),\quad \omega(r,\theta)=(\det h_{ij}(e^{-r},\theta))^{1/2}.
\]
Since $\omega(r,\theta)$ is positive definite, so there exist $c,C>0$ such that
\[
c<\omega(r,\theta)<C.
\]
By the same argument as in \eqref{eq:asym},
\begin{equation}\label{eq:omega-r}
|
\partial_r^k\partial_\theta^\alpha(\omega(r,\theta)-\omega_0(\theta))
|
\leq C_{k,\alpha}e^{-r},\quad \omega_0(\theta)=\lim_{r\to +\infty}\omega(r,\theta).
\end{equation}
So we have
\begin{equation}
\label{eq:log-derivs}
\frac{\partial_r p}{p}=n-1+\frac{\partial_r\omega}{\omega},\quad
\frac{\partial_{\theta_i} p}{p}=\frac{\partial_{\theta_i}\omega}{\omega}=\frac{\partial_{\theta_i}(\omega_0+\omega-\omega_0)}{\omega},
\end{equation}
Hence we have
\begin{equation}
    \left|\frac{\partial_{\theta_i} p}{p}\right|\le C(1+e^{-r}),\quad \left|\frac{\partial_r p}{p}-(n-1)\right|\le Ce^{-r}
\end{equation}

Now, we compute the three components of the expression of $q$. For the first term
\begin{equation}\label{first_term}
    \left|\frac{(\partial_r p)^2}{(2p)^2}\right|=\left|\frac{1}{4}\Bigl(\frac{\partial_r p}{p}\Bigr)^2\right|\le \frac{(n-1)^2}{4}+C e^{-r}.
\end{equation}
using $\sigma^{ij}=e^{-2r}h^{ij}$, we have
\[
\left|\frac{1}{(2p)^2}\sum_{i,j}\sigma^{ij}\,(\partial_{\theta_i}p)(\partial_{\theta_j}p)\right|
=C\left|\sum_{i,j}\sigma^{ij}\bigl(\partial_{\theta_i}\log p\bigr)\bigl(\partial_{\theta_j}\log p\bigr)\right|\le Ce^{-2r}.
\]
For the last term, in coordinates, we have 
\[
\Delta_g(u)=\frac{1}{p}\partial_r(p\partial_ru)+\frac{1}{p}\sum_{i,j}\partial_{\theta_i}(p\sigma^{ij}\partial_{\theta_j}u).
\]
Then take $u=p^{-1}$, we have
\[
\frac{1}{p}\partial_r(p\partial_r(p^{-1}))=-\frac{1}{p}\partial_r\left(\frac{\partial_r p}{p}\right)=-\frac{\partial^2_rp}{p^2}+\frac{(\partial_r p)^2}{p^3},
\]
and
\[
\frac{1}{p}\sum_{i,j}\partial_{\theta_i}\left(p\sigma^{ij}\partial_{\theta_j}(p^{-1})\right)=-\frac{1}{p}\sum_{i,j}\partial_{\theta_i}\left(\sigma^{ij}\frac{\partial_{\theta_j}p}{p}\right). 
\]
Multiply with $\frac{1}{2}p$, we have 
\begin{equation}\label{eq:laplacian}
    \frac{1}{2}p\Delta_g(p^{-1})=-\frac{1}{2}\partial_r\left(\frac{\partial_rp}{p}\right)-\frac{1}{2}\sum_{i,j}\partial_{\theta_i}(\sigma^{ij}\partial_{\theta_j}\log p).
\end{equation}
The first contribution is $|\partial^2_r(\log \omega)|\le C e^{-r}$. The second term by \eqref{sigma^{ij}} and \eqref{eq:log-derivs}, hence is controlled by $e^{-r}$.

The decay of $q$, whose computation is similar:
\begin{equation}
        \frac{\partial q}{\partial r}=\partial_r\left(\frac{\partial_r p}{2p}\right)^2+\partial_r\left(C\sum_{i,j}\sigma^{ij}\bigl(\partial_{\theta_i}\log p\bigr)\bigl(\partial_{\theta_j}\log p\bigr)\right)
        +\partial_r \left( \frac{1}{2}p\Delta_g(p^{-1}) \right)
\end{equation}
by \eqref{first_term} and \eqref{eq:log-derivs}, we know after taking derivative with respect to $r$, we have $\partial_r(\partial_rp/2p)$ and $\partial_r(\partial_rp/2p)^2$ are controlled by $e^{-r}$. By the second equality of \eqref{eq:log-derivs} and \eqref{sigma^{ij}}, we know 
\[
\left|\sum_{i,j}\sigma^{ij}\bigl(\partial_{\theta_i}\log p\bigr)\bigl(\partial_{\theta_j}\log p\bigr)\right|\le Ce^{-r},
\]
so by taking the derivative with respect to $r$. The last term, by \eqref{eq:laplacian}, is bounded by $e^{-r}$. So in all, the decay of $q$ is controlled by $e^{-r}$ hence $r^{-1-\delta_0}$ for any $\delta_0>0$.
\end{proof}

\subsection{Gluing to an even asymptotically hyperbolic surface}\label{sec:glue}

In this section we show that any asymptotically hyperbolic surface can be embedded into an even asymptotically hyperbolic one.
\begin{prop}\label{prop:glue}
Let $(M,g)$ be an asymptotically hyperbolic surface, $M=M_0\cup M_\infty$, with $M_0$ compact and $M_\infty$ an asymptotically hyperbolic end, and the trapped set $K\subset M$(see Proposition~\ref{prop:trapped_compact}). Then there exists $r_1>r_0>0$, where $r_0$ is the $r_0$ in Proposition~\ref{ass:CV}, and there exists an \textit{even} asymptotically hyperbolic surface $(\tilde{M},\tilde{g})$ such that:
\begin{enumerate}
\item[\textup{(a)}] $\tilde{M}=M_0\cup\tilde{M}_\infty$, where $\tilde{M}_\infty$ has the same underlying smooth manifold as $M_\infty$;
\item[\textup{(b)}] $\tilde{g}=g$ on a neighborhood of $M_0\cup\{(r,\theta)\in M_{\infty}: r<r_1\}$ inside $\tilde{M}$, and the trapped set $\tilde K$ of $\tilde{M}$ is equal to $K$.
\item[\textup{(c)}] $\tilde{g}$ is even AH near $\partial{\tilde{M}}$.
\end{enumerate}
If in addition $\sec_g\leq -k_0^2<0$ on $M$, then $\tilde{g}$ can be chosen to satisfy $\sec_{\tilde{g}}\leq -k_1^2<0$ globally on $\tilde{M}$ for some $k_1\in(0,k_0]$.
\end{prop}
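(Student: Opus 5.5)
The plan is to modify the metric only in the far region of the end, interpolating in the boundary variable $x=e^{-r}$ between the given $h(x)$ and its constant-in-$x$ limit $h_0=h(0)$. Fix a cutoff $\rho\in C^\infty([0,\infty))$ with $\rho=1$ on $[0,1]$ and $\rho=0$ on $[2,\infty)$. For a small parameter $\epsilon>0$ set
\[
\tilde h(x)=\rho(x/\epsilon)\,h_0+\bigl(1-\rho(x/\epsilon)\bigr)h(x),\qquad \tilde g=\frac{dx^2+\tilde h(x)}{x^2}\quad\text{on } M_\infty,
\]
and $\tilde g=g$ on $M_0$. Then $\tilde h$ is a metric on $\partial M$, being a convex combination of two metrics. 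It equals $h_0$ for $x<\epsilon$, so it is trivially even and (c) holds. It equals $h$ for $x>2\epsilon$, so (a) holds and $\tilde g=g$ on $\{r<r_1\}$ with $r_1=-\log(2\epsilon)$, which exceeds $r_0$ once $\epsilon$ is small.

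For the trapped set in (b), I will use Proposition~\ref{prop:trapped_compact} (compactness of $K$) together with convexity of the far end. The computation in Proposition~\ref{ass:CV} shows $-\partial_r\eta\ge \eta>0$ for $r$ large. The same estimate holds for $\tilde g$, uniformly in $\epsilon$: $\tilde h^{-1}$ differs from $h_0^{-1}$ by $O(x)$, and the $x$-derivative of the interpolation error is $O(\epsilon^{-1}\cdot\epsilon)=O(1)$, so $\partial_x \tilde h^{ij}=O(1)$ and hence $-\partial_r\tilde\eta\ge(2-Ce^{-r})\tilde\eta$. In geodesic terms, $\ddot r=-\tfrac12\partial_r\eta\ge 0$ along geodesics in $\{r>R\}$ for a fixed $R$ independent of $\epsilon$. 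Such geodesics cannot have interior local maxima of $r$, so a geodesic entering $\{r>R\}$ either escapes or exits. Consequently any trapped geodesic for $g$ or for $\tilde g$ stays in $\{r\le R\}$. Choosing $2\epsilon<e^{-R}$, the two metrics agree there, so the trapped geodesics coincide and $\tilde K=K$.

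For the curvature, I will use the warped-type formula in dimension two. Write $\tilde h=f(x,\theta)^2d\theta^2$, so that $\tilde g=dr^2+e^{2r}f^2d\theta^2$ and the Gaussian curvature is $-F''/F$ with $F=e^{r}f(e^{-r},\theta)$. Expanding gives
\[
\sec_{\tilde g}=-1+O\bigl(x\,|\partial_x f|+x^2|\partial_x^2 f|\bigr).
\]
For the interpolated $f$ one has $\partial_x f=O(1)$ and $\partial_x^2 f=O(\epsilon^{-1})$ on the transition region $x\sim\epsilon$, so the error is $O(\epsilon)$. Hence $\sec_{\tilde g}\le -1+C\epsilon\le -1/2$ on the modified region for $\epsilon$ small. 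Elsewhere $\tilde g=g$, so $\sec_{\tilde g}\le -\min(k_0^2,1/2)$, and one takes $k_1=\min(k_0,1/\sqrt2)$. This step is the main obstacle: it requires that the curvature error be scale-invariant under $x\mapsto x/\epsilon$, which works precisely because the derivatives of the cutoff appear with the compensating weights $x^k$.
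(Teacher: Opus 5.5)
Your proposal is correct and follows essentially the same route as the paper. Both interpolate $h$ to the constant family $h(0)$ across a layer $x\sim\epsilon$ and use the $\epsilon$-uniform convexity bound $-\partial_r\tilde\eta\ge(2-Ce^{-r})\tilde\eta$ to confine both trapped sets to the unmodified region; both control the curvature through $-1+x a_x/a-x^2a_{xx}/a$, with the $O(\epsilon)$ error coming from $h-h(0)=O(x)$ (your direct convexity argument for $r(t)$ along geodesics just replaces the paper's appeal to Lemma~\ref{escape}).
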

\begin{proof}
We will use the notations as in Remark~\ref{coordinate}.
On $M_\infty$ choose a boundary defining function $x$ and a collar diffeomorphism $\Phi:[0,\varepsilon_0)_x\times \partial{M}\to U\subset{M}$ such that $\Phi^*g$ has the form $(dx^2+h(x))/x^2$ for some smooth family of metrics $h(x)$ on $\partial{M}$. We assume $\varepsilon_0<e^{-r_0}$ is small.

Let $h^{\rm ev}(x)$ be any smooth one-parameter family of metrics on $\partial M$ that is \emph{even} in $x$, i.e., a smooth function of $x^2$, and that satisfies $h^{\rm ev}(0)=h(0)$. We will choose the constant family $h^{\rm ev}(x)\equiv h(0)$.

Pick a fixed constant(which will be chosen later) $\varepsilon_1\in(0,\varepsilon_0)$ small enough that $M_0$ is properly contained in $\Phi([\varepsilon_1,\varepsilon_0)\times \partial M)\cup M_0$. Choose $\chi\in C_c^\infty(M)$ with
\[
\chi(x)=1\text{ for }x\ge \frac{2}{3}\varepsilon_1,\quad\chi(x)=0\text{ for }x\le \frac{1}{3}\varepsilon_1.
\]
Define
\begin{equation}
\label{eq:def-htilde}
\tilde{h}(x):=\chi(x)\,h(x)+(1-\chi(x))\,h^{\rm ev}(x)\quad\text{for }x\in[0,\varepsilon_0).
\end{equation}
This is a smooth family of metrics on $\partial M$, and:
\begin{itemize}
\item For $x\geqslant \frac{2}{3}\varepsilon_1$: $\tilde{h}(x)=h(x)$.
\item For $x\leqslant \frac{1}{3}\varepsilon_1$: $\tilde{h}(x)=h^{\rm ev}(x)$, even with respect to $x$.
\end{itemize}

Define $\tilde{g}$ on $M$ by:
\[
\tilde{g}:=\begin{cases}g & \text{on }M\setminus\Phi([0,\tfrac{2}{3}\varepsilon_1)\times S),\\[2pt]
\Phi^{-1\,*}\!\Bigl(\dfrac{dx^2+\tilde{h}(x)}{x^2}\Bigr) & \text{on }\Phi([0,\varepsilon_0)\times S).
\end{cases}
\]
The two definitions agree on the overlap $\Phi([\tfrac{2}{3}\varepsilon_1,\varepsilon_0)\times \partial M)$ because $\tilde{h}=h$ there. Thus $\tilde{g}$ is a well-defined smooth complete metric on $M$ that is even asymptotically hyperbolic near infinity. By construction $\tilde{g}=g$ on a neighborhood of $M_0$, and $\tilde{h}$ is even in $x$ near $x=0$, so $\tilde{g}$ is even AH. Set $\tilde{M}=M$ with this new metric. Once we show that two trapped sets are equal, properties (a)--(c) are immediate. 

To show this, we will show that for $\varepsilon_1$ small, we have
\[
-\partial_r\eta(x,\theta)>\eta(x,\theta),\quad \forall\, (x,\theta)\in \{(x,\theta)\colon x<\varepsilon_1\}
\]

Since we are in dimension $2$, so $\eta(r,\theta)=e^{-2r}\tilde{h}(e^{-r},\theta)$. Where
\[
\tilde{h}(x,y)dy^2=\tilde h(x).
\]
By direct computation, we have
\begin{equation}
\begin{split}
     \partial_r\eta&=-2e^{-2r}\tilde{h}(e^{-r},\theta)+e^{-2r}\partial_r(\tilde{h}(e^{-r},\theta))= -2\eta+e^{-2r}\partial_r(\tilde{h}(e^{-r},\theta))\\
     &=-2\eta+\partial_r[\chi(e^{-r})\cdot(h(e^{-r},\theta)-h(0,\theta))]\\
     &=-2\eta+e^{-3r}(h(e^{-r},\theta)-h(0,\theta))\chi'(e^{-r})+e^{-3r}\chi\cdot(\partial_xh(e^{-r},\theta))
\end{split}
\end{equation}

By $\chi'\sim \varepsilon_1$ the same argument in Proposition~\ref{ass:CV}, we have
\begin{equation}
    -\partial_r\eta\ge (2-Ce^{-r})\eta.
\end{equation}
Where $C$ is independent from $\varepsilon_1$. We can choose $\varepsilon_1$ small enough, and since $C$ is independent from $\varepsilon_1$, for $r_1$ large enough such that $e^{-r_1}>\frac{2}{3}\varepsilon_1$, we have $2-Ce^{-r_1}>1$. By Lemma~\ref{escape} and Proposition~\ref{prop:trapped_compact}, we have $\tilde K\subset \{r<r_1\}=\{x>-\log r_1\}$, hence $\tilde K\subset \{x>\frac{2}{3}\varepsilon_1\}$. But in $\{x>\frac{2}{3}\varepsilon_1\}$, two metrics coincide, hence $\tilde K=K$.

In coordinates, suppose $g$ has the expression 
\[
g=\frac{dx^2+a(x,y)^2dy^2}{x^2}\quad\Longrightarrow \quad \tilde{g}=\frac{dx^2+\tilde{a}(x,y)^2dy^2}{x^2},
\]
with $\tilde{a}^2=\chi a^2+(1-\chi)a^2(0,y)$. In this expression, we have the Gauss curvature
\[
K_g=-1+\frac{xa_x}{a}-\frac{x^2a_{xx}}{a}.
\] 
We need to compute the curvature in the transition region, which is close to the boundary, so we can assume $a>\varepsilon_0>0$ in this region. Set $\rho(x,y)=a^2(x,y)-a^2(0,y)$, which vanishes on the boundary, hence $\rho(x,y)=x_1\rho_1(x,y)$ for $\rho_1$ smooth function. Then
\[
\tilde{a}^2(x,y)=a^2(0,y)+R(x,y),\quad R(x,y)=\chi(x)x\rho_1(x,y).
\]
Since the curvature form only have the derivative w.r.t. $x$, so we take differential:
\begin{align*}
R_x   &= \chi\, \rho_1 + x\, \chi'\, \rho_1 + x\, \chi\, \rho_{1,x}, \\
R_{xx} &= 2\chi'\rho_1 + 2\chi\,\rho_{1,x}
        + x\,\chi''\rho_1 + 2x\,\chi'\rho_{1,x} + x\,\chi\,\rho_{1,xx}.
\end{align*}
The cut-off $\chi$, satisfies $|\chi'| \leqslant C\varepsilon_1^{-1}$ and $|\chi''| \leqslant C\varepsilon_1^{-2}$ on the transition interval $[1/3\varepsilon_1,2/3\varepsilon_1]$. At the same time, $x\sim \varepsilon_1$. So, 
\[
|R| = O(\varepsilon_1),\quad
|R_x| = O(1),\quad
|R_{xx}| = O(\varepsilon_1^{-1}).
\]
and
\[
|xR_x|\sim \varepsilon_1,\quad |x^2R_{xx}|\sim \varepsilon_1.
\]
So we can choose $\varepsilon_1$ small enough so that the sectional curvature stays below a negative constant on the transition region.

\end{proof}

\subsection{Propagation estimates}
In this section, we recall some basics of the elliptic and propagation estimates (see \cite[Appendix E]{dyatlov2019mathematical} for more details).

Write $h=\lambda^{-1}$, set 
\[
P_h:=-h^2\Delta_g-\tfrac{h^2}{4}-1,\quad E:=|\xi|_g^2-1,\quad \Sigma:=E^{-1}(0)=S^*M,
\]
Define the flow $\varphi_t=\operatorname{exp}(tH_p)$ to be the geodesic flow.
The backward/forward trapped set is defined as
\begin{equation}
\Gamma_\pm:=\{\rho\in\Sigma: \{\varphi_{\mp t}(\rho):t\ge0\}\ \text{bounded}\}.
\end{equation}
The trapped set $K$ is defined by
\begin{equation}\label{eq:trapped-set}
    K:=\Gamma_+\cap \Gamma_-.
\end{equation}
\begin{prop}\label{prop:trapped_compact}
    Under our assumptions in Theorem~\ref{Thm}, the forward and backward trapped sets are both closed, and the trapped set $K$ is a compact set.
\end{prop}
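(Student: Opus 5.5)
The plan is an escape lemma at infinity, which the proof of Proposition~\ref{prop:glue} already cites as Lemma~\ref{escape}. I will show there is $r_1$ such that every geodesic that enters $\{r>r_1\}$ and is moving outward never comes back. This traps bounded trajectories in a fixed compact set; closedness of $\Gamma_\pm$ and compactness of $K$ then follow from continuity of the flow.

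\textbf{Step 1 (convexity at infinity).} On $M_\infty$ write the geodesic Hamiltonian as $p=\rho^2+\eta(r,\theta,\xi)$, where $\rho$ is dual to $r$. Along the flow,
\[
\dot r=2\rho,\qquad \dot\rho=-\partial_r\eta .
\]
By \eqref{CV-h-deriv}, valid under assumption (i) (for AH ends it is in fact $-\partial_r\eta\ge \eta$ for $r\ge r_0$, as in the proof of Proposition~\ref{ass:CV}), we get
\[
\ddot r=-2\partial_r\eta\ge \tfrac{2C}{r}\eta\ge 0 \quad\text{in } \{r\ge r_0\}.
\]
So $r(t)$ is convex wherever the trajectory stays in $\{r\ge r_0\}$. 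Consequently, on $\Sigma$, if $r(t_0)>r_0$ and $\dot r(t_0)\ge 0$, then $r(t)$ is nondecreasing for $t\ge t_0$. Indeed, as long as $r\ge r_0$ we have $\dot r$ nondecreasing, so $r$ can never drop back to $r_0$.

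\textbf{Step 2 (escape to infinity).} I need to show that $r\to\infty$ forward in time, using only $\eta\le 1$ on $\Sigma$. If $\eta(t)\to0$ then $\rho^2\to1$, so $\dot r\to 2$. Otherwise $\dot\rho\ge c\,\eta/r$ gives
\[
\rho(t)-\rho(t_0)\ge c\int_{t_0}^{t}\frac{\eta}{r}\,ds ,
\]
while $r$ grows at most linearly because $|\dot r|\le 2$. This forces $\rho$ to become positive at a definite rate, and then $r\ge c'(t-t_1)$ after some time $t_1$.

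This step is the main obstacle. Under the general Cardoso--Vodev hypothesis, a weak $1/r$ rate makes a direct argument delicate. I will try first the energy trick: $\rho^2+\eta=1$ together with $\dot\rho\ge \tfrac{c}{r}(1-\rho^2)$. This is a Riccati-type differential inequality. Comparing with its solutions shows that $\rho$ stays bounded below by a positive constant once it becomes positive, which yields linear growth of $r$. For AH ends the stronger bound $-\partial_r\eta\ge\eta$ makes this immediate.

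\textbf{Step 3 (the structure of $\Gamma_\pm$ and $K$).} The same argument applied backward in time, with $\dot r\le0$, shows the following. Any trajectory that leaves $\{r\le r_1\}$ with $r_1>r_0$ escapes either forward or backward to infinity. Hence
\[
\Gamma_\pm\subset\{\rho\in\Sigma:\ r(\varphi_{\mp t}\rho)\le r_1\ \text{for all } t\ge0\},
\]
and the two sets are in fact equal, since a bounded trajectory cannot pass beyond $r_1$ without escaping. The right-hand side is closed because $\varphi_t$ is continuous and it is an intersection over $t$ of closed sets. Then $K=\Gamma_+\cap\Gamma_-$ is a closed subset of $S^*M\cap\{r\le r_1\}$, which is compact because $M_0$ is compact and the fibers of $S^*M$ are spheres. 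This also gives $\pi(K)\subset\{r<r_1\}$, which is the form used in the proof of Proposition~\ref{prop:glue}.
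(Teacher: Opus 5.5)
Your Steps 1 and 2 are fine. Step 2 is in fact easier than you expect: by Step 1, $\dot r$ is nondecreasing while $r\ge r_0$, so once $\dot r>0$ at a point with $r>r_0$ it stays bounded below by that value and $r$ grows linearly; if $\dot r=0$ there, then $\eta=1$ and $\ddot r\ge 2C/r>0$. No Riccati comparison is needed. The genuine gap is in Step 3, exactly where you derive closedness of $\Gamma_\pm$. The identity $\Gamma_\pm=\{\rho\in\Sigma: r(\varphi_{\mp t}\rho)\le r_1 \text{ for all } t\ge 0\}$ is false. Taking $t=0$, the right-hand side lies in $\{r\le r_1\}$. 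But $\Gamma_\pm$ is flow-invariant, so it is unbounded as soon as $\Gamma_\pm\neq K$, which holds for every nonempty hyperbolic trapped set. Concretely, take a point far out, with $r>r_1$, on a geodesic that comes in from infinity and converges to $K$. It lies in $\Gamma_-$, since its forward orbit is bounded, yet it is not in your set. Your dichotomy says a trajectory with $r>r_1$ at time $t^*$ escapes forward or backward from $t^*$. That does not exclude this point, because it escapes \emph{backward}, which is irrelevant for $\Gamma_-$. Your sentence ``a bounded trajectory cannot pass beyond $r_1$ without escaping'' is valid only for trajectories bounded in both time directions.

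What your argument does prove is $K=\{\rho\in\Sigma: r(\varphi_t\rho)\le r_1 \text{ for all } t\in\bbR\}$. This is an intersection of closed sets inside the compact set $\Sigma\cap\{r\le r_1\}$, so compactness of $K$ and $\pi(K)\subset\{r\le r_1\}$ survive independently of $\Gamma_\pm$. For closedness of $\Gamma_\pm$, the criterion must see the direction of motion, which is what the paper does. Let $O_+=\{r>r_1,\ \dot r>0\}$; this set is open, forward-invariant, and by your Steps 1--2 its points escape. Conversely, suppose $r(\varphi_t\rho)>m:=\max(r(\rho),r_1)$ for some $t>0$. Applying the mean value theorem on the last excursion above $m$ gives a time $s\in(0,t)$ with $r>r_1$ and $\dot r>0$, so the orbit enters $O_+$. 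Hence $\Gamma_-=\Sigma\setminus\bigcup_{t\ge0}\varphi_{-t}(O_+)$, which is closed. Equivalently, the minimal correction to your formula is $\Gamma_-=\{\rho: r(\varphi_t\rho)\le\max(r(\rho),r_1) \text{ for all } t\ge0\}$, which is closed because $\rho\mapsto\max(r(\rho),r_1)$ is continuous. $\Gamma_+$ is handled by reversing time.
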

\begin{proof}
    We will only prove for $\Gamma_-$, \,$\Gamma_+$ is similar.
    We use the parameter in \eqref{an-para}, via direct computation, we have
    \begin{equation}
        H_{\|\xi\|^2-1}(r,\theta,\nu,\xi)=2\nu\partial_r+2\sigma^{ij}\xi_j\partial_{\theta_i}-\frac{\partial \eta}{\partial r}\partial_\nu -\frac{\partial \eta}{\partial \theta_i}\partial_{\xi_j}.
    \end{equation}
    Hence, if a curve $(r,\theta,\nu,\xi)$ is an integral curve of this Hamilton vector field, we have
    \begin{equation}\label{eq:trapped_compact_esti}
        \dot{r}=2\nu,\quad \dot{\nu}=-\frac{\partial \eta}{\partial r}\ge \frac{C}{r}\eta\ge 0.
    \end{equation}

    Define $O_+=\{(r,\theta,\nu,\xi)\in \Sigma\colon r>R>r_0,\nu> 0\}$, where $R$ is a fixed number, by \eqref{eq:trapped_compact_esti}, we know $O_+\cap \Gamma_-=\emptyset$. If an integration curve $\gamma(t)=(r,\theta,\nu,\xi)(t)$ is not contained in $\Gamma_-$, then there exists a first time
$t_0>0$ such that
\[
\gamma(t_0)\in\{r=r_0\}.
\]
  Furthermore, we have $\nu(t_0)\ge 0$. We claim the following lemma:
  \begin{lem}\label{escape}
      For such a $\gamma$, which is a integration curve and does not contained in $\Gamma_-$, then there exist a $t_1>t_0$, such that 
  \begin{equation}
      \gamma\mid_{t>t_1}\subset O_+.
  \end{equation}
  \end{lem}
  \begin{proof}
    There exist a first time $t_1$ of $\gamma$ intersect $\{r=R\}$. It suffices to show that such a first time is the desired $t_1$. For $t_0$ defined above, we have $\dot{r}(t_0)=\nu\ge0$. By the equation \eqref{eq:trapped_compact_esti}, we have the following three cases:
    \begin{itemize}
        \item If $\nu(t_0)\neq 1,0$, then by \eqref{eq:trapped_compact_esti}, we know $\dot{r}$ and $\dot{\nu}$ will always be larger than $0$ for $t>t_0$.
        \item If $\nu=1$, then $\nu=1$ for $t>t_0$.
        \item If $\nu=0$, then $\dot{\nu}=\frac{C}{r}\eta>0$, hence $\nu>0$ for $t>t_0$. \qedhere
    \end{itemize}  
  \end{proof}
By this claim, we have that 
\begin{equation}
    \Gamma_-=\Sigma\setminus \bigcup_{T\in \bbR}\varphi_T(O_+)\quad \Longrightarrow \quad \Gamma_-\text{ is closed.}
\end{equation}
Since $\varphi_T(O_+)$ is open.

Finally, the compactness of $K$. Since $\Gamma_-$ and $\Gamma_+$ are closed, we have that $K=\Gamma_-\cap \Gamma_+$ is also closed. We can see that $K\subset \{(r,\theta,\nu,\xi)\in \Sigma\colon r<R\}$: if a point in $(r,\theta,\nu,\xi)\in \{(r,\theta,\nu,\xi)\in \Sigma\colon r<R\}$. Then by Lemma~\ref{escape}, we know the integral curve with initial value $(x,\nu)$ will go to infinity forward or backward. But $\{(r,\theta,\nu,\xi)\in \Sigma\colon r<R\}$ is compact, this shows that $K$ is compact.
\end{proof}

For $u\in L^2(M)$ and $s\in \mathbb{R}$, we use the semiclassical Sobolev norm
\begin{equation*}
    \|u\|_{H_h^s}:= \| (1-h^2\Delta)^{s/2} u\|_{L^2}.
\end{equation*}
We also introduce the notation
\begin{equation}\label{eq:sobolev}
    \|u\|_{H^1_\lambda}^2=\|u\|_{L^2}^2+\lambda^{-2}\|\nabla u\|_{L^2}^2.
\end{equation}
We will use the following elliptic estimate and propagation estimate (see \cite[Appendix E]{dyatlov2019mathematical} for more details):
\begin{lem}[Elliptic estimate]\label{lem:elliptic}
     If $A\in\Psi^{0}_h$, $B\in\Psi_h^0$ is elliptic on $\WF_h(A)$, and $\operatorname{WF}_h(A)\cap\Sigma=\emptyset$,
then for any $s\in \mathbb{R}$, we have
\begin{equation}
    \|Au\|_{H^s_h}\le C\|B P_h u\|_{H^{s-2}_h}+O(h^\infty)\|\chi u\|_{H^{-N}_h}. 
\end{equation}
Where $\chi\in C_c^\infty(M)$ depends on $A,B,P_h$.
\end{lem}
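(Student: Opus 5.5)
The plan is the standard microlocal-parametrix argument for the elliptic estimate in the semiclassical calculus, as in \cite[Appendix E]{dyatlov2019mathematical}. Write $P_h=-h^2\Delta_g-\tfrac{h^2}{4}-1\in\Psi^2_h$, with principal symbol $E=|\xi|_g^2-1$. Since $\WF_h(A)\cap\Sigma=\emptyset$ and $\WF_h(A)$ is closed in the fibre-compactified cotangent bundle, $E$ is bounded away from zero on a neighbourhood of $\WF_h(A)$. Near fibre infinity this follows because $\langle\xi\rangle^{-2}E\to1$, so $P_h$ is elliptic there in the class $\Psi^2_h$. The hypothesis that $B$ is elliptic on $\WF_h(A)$ then makes $BP_h$ elliptic, in the sense of $\Psi^2_h$, on a neighbourhood $U$ of $\WF_h(A)$.

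The first step is to construct a microlocal inverse. Using the standard symbolic iteration, we build $Q\in\Psi^{-2}_h$, compactly microsupported in the base directions near $\pi(\WF_h(A))$, such that
\[
QBP_h=I+R,\qquad \WF_h(R)\cap U=\emptyset .
\]
We start with the symbol $q_0=\chi\,(\sigma(B)E)^{-1}$, where $\chi$ is a cutoff equal to $1$ near $\WF_h(A)$ and supported in $U$. We then correct successively by terms of order $h^k$ and Borel sum the corrections.

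The second step is to write
\[
Au=AQBP_hu-ARu .
\]
Since $\WF_h(A)\cap\WF_h(R)=\emptyset$, the operator $AR$ lies in $h^\infty\Psi^{-\infty}_h$. Because $\WF_h(A)$ projects to a compact subset of $M$ (this is implicit in the use of a compactly supported $\chi$), we have $AR=AR\chi+O(h^\infty)_{\Psi^{-\infty}}$ for a suitable $\chi\in C_c^\infty(M)$ equal to $1$ near the base projection of $\WF_h(A)$ and $\WF_h(Q)$. This gives
\[
\|ARu\|_{H^s_h}\le Ch^N\|\chi u\|_{H^{-N}_h}.
\]
The main term is handled by the boundedness $AQ\colon H^{s-2}_h\to H^s_h$, which holds because $AQ\in\Psi^{-2}_h$, is uniformly bounded in $h$, and has compactly supported kernel. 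Together these give the stated inequality.

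The main obstacle is bookkeeping on the non-compact manifold. One must ensure the operators used have properly supported kernels, so that the remainder really involves only $\chi u$ and the Sobolev mapping properties are uniform. Even so, $A$ may be non-compactly supported in $x$, and then the $\chi$ on the right cannot be compactly supported. The first thing I would try is to read the lemma, as it is used here, with $A$ compactly supported in space. In that case all kernels are compactly supported and the standard compact-manifold argument goes through verbatim after multiplying by cutoffs.
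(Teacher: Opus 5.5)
Your argument is correct and is the standard elliptic-parametrix proof from \cite[Appendix E]{dyatlov2019mathematical}, which the paper cites without giving a proof of its own: build $Q\in\Psi^{-2}_h$ with $QBP_h=I+R$ and $\WF_h(R)\cap\WF_h(A)=\emptyset$, split $Au=AQBP_hu-ARu$, and you are right that $A$ must be compactly supported (as it is in the paper's application, $A=\chi_L(1-A_\Sigma)$) for a compactly supported $\chi$ to appear on the right. Two small slips, neither affecting the logic: with $q_0=\chi(\sigma(B)E)^{-1}$ the remainder $R$ is only microlocally trivial where the cutoff equals $1$, not on all of $U$, which suffices because only disjointness from $\WF_h(A)$ is used; and the spatial cutoff should equal $1$ near the kernel supports of $A$ and $Q$, not merely near the projections of their wavefront sets.
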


\begin{lem}[Propagation estimate]\label{lem:propagation}
    Let $A,B,B_1\in\Psi^{\mathrm{comp}}_h$ with
$\operatorname{WF}_h(A)\subset\operatorname{Ell}(B_1)$, and suppose that for each $\rho\in\operatorname{WF}_h(A)$ there is $T(\rho)\ge0$
with $\varphi_{-T(\rho)}(\rho)\in\operatorname{Ell}(B)$ and $\varphi_{-t}(\rho)\in\operatorname{Ell}(B_1)$ for
$0\le t\le T(\rho)$. Then for any $s\in \mathbb{R}$,
\begin{equation}\label{eq:propagation_estimate}
\|Au\|_{H_h^s}\le C\|Bu\|_{H_h^s}+Ch^{-1}\|B_1 P_h u\|_{H^{s-1}_h}+O(h^\infty)\|\tilde \chi u\|_{H^{-N}_h} .
\end{equation}
Where $\tilde{\chi}\in C_c^\infty(M)$ is supported in a neighbourhood of the support of the Schwartz kernel of $A,B,B_1$.
\end{lem}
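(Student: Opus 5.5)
The plan is to follow the standard positive commutator proof of propagation of singularities (as in \cite[Appendix E]{dyatlov2019mathematical}). First I reduce to $s=0$. Since $A,B,B_1\in\Psi^{\mathrm{comp}}_h$ have compact wavefront sets and compactly supported kernels, every $H^s_h$ norm of $Au$, $Bu$, $B_1P_hu$ is comparable, up to $O(h^\infty)\|\tilde\chi u\|_{H^{-N}_h}$, to the corresponding $L^2$ norm. One inserts a microlocal identity $X\in\Psi^{\mathrm{comp}}_h$ equal to the identity near $\operatorname{WF}_h(A)\cup\operatorname{WF}_h(B_1)$ and uses $(1-h^2\Delta)^{s/2}X\in\Psi^{\mathrm{comp}}_h$.

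Next I localize. By compactness of $\operatorname{WF}_h(A)$ and openness of the elliptic sets, each $\rho\in\operatorname{WF}_h(A)$ has a neighborhood $U_\rho$ such that, with the same time $T(\rho)$, $\varphi_{-T(\rho)}(U_\rho)\subset\operatorname{Ell}(B)$ and $\varphi_{-t}(U_\rho)\subset\operatorname{Ell}(B_1)$ for $0\le t\le T(\rho)$. Finitely many $U_\rho$ cover $\operatorname{WF}_h(A)$. Using a microlocal partition of unity, it suffices to prove the estimate for $A$ with $\operatorname{WF}_h(A)$ inside one small tube $\bigcup_{0\le t\le T}\varphi_t(V)$, where $V\subset\operatorname{Ell}(B)$ is small.

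The core step is constructing an escape function. I choose $f\in C_c^\infty(T^*M)$, $f\ge0$, supported in $\operatorname{Ell}(B_1)$, of the form $f=e^{-\beta t}\chi_0(t)\chi_1(\rho_0)$ in flow-box coordinates $(t,\rho_0)\mapsto\varphi_t(\rho_0)$. This gives
\[
H_pf\le -c\,a^2+C b^2 \quad\text{on } T^*M,
\]
where $a$ is elliptic on $\operatorname{WF}_h(A)$ and $b$ is supported in $\operatorname{Ell}(B)$. The positive term $Cb^2$ arises where the cutoff $\chi_0$ turns on near $t=0$. I then quantize $F=\mathrm{Op}_h(f^2)$ and compute
\[
\operatorname{Im}\langle P_hu,Fu\rangle=\tfrac{h}{2}\langle \tfrac{i}{h}[P_h,F]u,u\rangle+\langle(\operatorname{Im}P_h)Fu,u\rangle .
\]
Here $P_h$ is self-adjoint, so only the commutator term survives; its principal symbol is $H_p(f^2)$. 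The sharp Gårding inequality then yields
\[
ch\|\mathrm{Op}_h(a)u\|^2\le Ch\|Bu\|^2+|\langle P_hu,Fu\rangle|+Ch^2\|B_1'u\|^2+O(h^\infty)\|\tilde\chi u\|^2 ,
\]
with $B_1'$ microsupported in $\operatorname{Ell}(B_1)$. Applying Cauchy--Schwarz, I bound $|\langle P_hu,Fu\rangle|$ by $\epsilon h\|\mathrm{Op}_h(f)u\|^2+C\epsilon^{-1}h^{-1}\|B_1P_hu\|^2$, absorbing the first piece via ellipticity of $a$ on $\operatorname{supp} f$ (after arranging $f\lesssim a$).

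The main obstacle is the remainder $h^{1/2}\|B_1'u\|$, which is only $h^{1/2}$ better than the left-hand side. I handle this with the usual bootstrap. Apply the same estimate to $B_1'$ with slightly enlarged supports, still inside $\operatorname{Ell}(B_1)$ and still flowing back into $\operatorname{Ell}(B)$. Each iteration gains $h^{1/2}$, so after $2N$ steps the remainder is $O(h^N)\|\tilde\chi u\|$. Finally, the elliptic estimate (Lemma~\ref{lem:elliptic}) controls any piece of $\operatorname{WF}_h(A)$ away from $\Sigma$. Together these give \eqref{eq:propagation_estimate}.
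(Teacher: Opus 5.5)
The paper does not prove this lemma itself but quotes it from \cite[Appendix E]{dyatlov2019mathematical}. Your outline is the positive commutator argument given there: reduction to $s=0$, microlocal localization to tubes, an exponentially weighted escape function, sharp G{\aa}rding, the $h^{1/2}$ bootstrap over nested tubes, and the elliptic estimate off $\Sigma$. So it is correct and follows the same route.

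One detail needs adding. Your flow-box coordinates along $[0,T(\rho)]$ require the backward segment from $\rho$ to be injective, which fails on closed orbits of period at most $T(\rho)$. You can fix this in either of two ways:
\begin{enumerate}
\item replace $T(\rho)$ by its residue modulo the period;
\item avoid flow boxes altogether by taking $f(\rho)=\int_0^{T}e^{\beta t}\chi(\varphi_t(\rho))\,dt$, with $\chi$ a cutoff near $\operatorname{WF}_h(A)$. This gives $H_pf=-\beta f-\chi+e^{\beta T}\chi\circ\varphi_T$, where the last term is supported in $\operatorname{Ell}(B)$.
\end{enumerate}
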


\section{Proof of Theorem~\ref{Thm}}\label{sec:thm1}

We use without proof the following two estimates as black boxes.
\begin{lem}[Even AH log-loss resolvent \cite{tao2021spectral}]
\label{lem:XZ25-resolvent}
For every $\chi\in C_c^\infty(\tilde{M})$, where $\tilde{M}$ is an even asymptotically hyperbolic surface with negative curvature, there exists $C>0$ such that for $\lambda\geqslant C$,
\begin{equation}
\label{eq:XZ-bound}
\|\chi R_{\tilde{M}}(\lambda\pm i0)\chi\|_{L^2(\tilde{M})\to L^2(\tilde{M})}\leq C\lambda^{-1}\log\lambda.
\end{equation}
\end{lem}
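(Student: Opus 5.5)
This lemma is imported from \cite{tao2021spectral}, so the plan is to reconstruct how the real-axis bound \eqref{eq:XZ-bound} follows from the spectral-gap result proved there, rather than to reprove the gap. The starting point is the main theorem of \cite{tao2021spectral}. For an even asymptotically hyperbolic surface $\tilde M$ with negative curvature, the cutoff resolvent $\chi R_{\tilde M}(\lambda)\chi$ continues meromorphically to $\mathbb C$ (via \cite{mazzeo1987meromorphic,guillarmou2005meromorphic,vasy2013microlocal}). Moreover, there are $\beta>0$ and $N$ such that, for $\operatorname{Re}\lambda\ge C$ and $-\beta\le\operatorname{Im}\lambda\le 0$, one has no resonances and
\[
\|\chi R_{\tilde M}(\lambda)\chi\|_{L^2\to L^2}\le C\lambda^{N}.
\]
That result is obtained by reducing, through Vasy's method and an open quantum map or quantum monodromy construction, to the fractal uncertainty principle of \cite{vacossin2024spectral}. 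The hypotheses of negative curvature and evenness are exactly what this reduction needs. The evenness is also why the present paper must glue to an even model in Proposition~\ref{prop:glue}.

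Next I will record the non-trapping input. In the physical half-plane $\operatorname{Im}\lambda\ge c$, one has the trivial bound $\|\chi R_{\tilde M}(\lambda)\chi\|\le C(\operatorname{Im}\lambda)^{-1}\lambda^{-1}$, which comes from the spectral theorem together with an elliptic gain. Combined with the meromorphic continuation, this gives an a priori bound of the form $e^{C\lambda}$ (or $e^{C\lambda^{\kappa}}$) on the whole strip $-\beta\le\operatorname{Im}\lambda\le c$. That is the growth condition needed for a Phragm\'en--Lindel\"of argument.

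The key step is the semiclassical maximum principle, \cite[Lemma 4.7]{burq2004smoothing}; see also the Tang--Zworski or Datchev--Vasy formulation. Fix $u,v\in L^2$ and consider the holomorphic function $F(\lambda)=\langle \chi R_{\tilde M}(\lambda)\chi u,v\rangle$ on a box of width $\sim 1$ around $\lambda_0\gg1$ and height from $-\beta$ to $c\lambda_0^{-1}\log\lambda_0$-type scales. On the upper edge one has $|F|\lesssim \lambda^{-1}(\operatorname{Im}\lambda)^{-1}$, and on the lower edge $|F|\lesssim\lambda^N$. Multiplying by a suitable holomorphic weight such as $e^{i\,a(\lambda-\lambda_0)\log\lambda_0}$ and applying the three-lines/maximum principle interpolates between these two bounds. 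At the real point $\lambda_0$ this yields $|F(\lambda_0)|\lesssim \lambda_0^{-1}\log\lambda_0\,\|u\|\|v\|$. Taking boundary values $\lambda\pm i0$, using $R(\lambda-i0)=R(\lambda+i0)^*$ for the other sign, gives \eqref{eq:XZ-bound}. Finally, a general $\chi$ is handled by enlarging it to a cutoff equal to $1$ on its support.

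I expect the main obstacle to be bookkeeping rather than ideas: the constants in the maximum principle must come out exactly as $\lambda^{-1}\log\lambda$ and not as a power loss. This requires the width of the box in $\operatorname{Re}\lambda$ to scale like $(\log\lambda)^{-1}$, so that the polynomial bound $\lambda^N$ on the lower edge is damped to $O(1)$ by the weight. That scaling is exactly the choice made in \cite{burq2004smoothing}, and I would follow it verbatim.
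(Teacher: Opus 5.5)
The paper gives no argument for this lemma; it is imported as a black box from \cite{tao2021spectral}. Your plan is the standard mechanism and is sound: extract the real-axis bound from the essential gap, with its polynomial cutoff-resolvent bound, via the semiclassical maximum principle. But the quantitative step, as you set it up, does not work.

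The first problem is the height of the box. If the top edge sits at $\operatorname{Im}\lambda=c\lambda_0^{-1}\log\lambda_0$, the only information there is $|F|\le(2\lambda_0\operatorname{Im}\lambda)^{-1}\sim(\log\lambda_0)^{-1}$. Since $\lambda_0$ lies within that distance of the top edge, the maximum principle cannot give anything better than $O((\log\lambda_0)^{-1})$ at $\lambda_0$, whatever weight you use.

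The second problem is the width. Shrinking it to $(\log\lambda_0)^{-1}\ll\beta$ is fatal rather than helpful. The parts of the vertical sides near the real axis, where you only know $|F|\lesssim\lambda_0^N$, then carry harmonic measure of order one as seen from $\lambda_0$, and you get back only a polynomial bound.

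The $(\log\lambda_0)^{-1}$ scale belongs to the height of the top edge. The width stays of order one, and the damping of the bottom edge comes from the weight. Concretely, with $L=\log\lambda_0$ and $\lambda-\lambda_0=x+iy$, take
\[
\Phi(\lambda)=\exp\bigl(-i\alpha L(\lambda-\lambda_0)-\gamma L(\lambda-\lambda_0)^2\bigr),\qquad |\Phi(\lambda)|=\exp\bigl(\alpha Ly-\gamma L(x^2-y^2)\bigr),
\]
and apply the maximum principle to $F\Phi$ on $[\lambda_0-1,\lambda_0+1]+i[-\beta,L^{-1}]$, with $\beta<1$. The boundary bounds are:
\begin{itemize}
\item on the top edge, $|F\Phi|\le e^{\alpha+\gamma}\lambda_0^{-1}L$;
\item on the bottom edge, $|F\Phi|\le C\lambda_0^{N-\alpha\beta+\gamma\beta^2}$;
\item on the sides $x=\pm1$, $|F\Phi|\le Ce^{\alpha}\lambda_0^{N-\gamma(1-\beta^2)}$;
\item at the center, $\Phi(\lambda_0)=1$.
\end{itemize}
Choose first $\gamma(1-\beta^2)\ge N+1$ and then $\alpha\beta\ge N+1+\gamma\beta^2$. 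This gives $|F(\lambda_0)|\le C\lambda_0^{-1}\log\lambda_0$, and since $\lambda_0$ is arbitrary this is the bound on the real axis.

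Two smaller points. First, the side estimate also uses the polynomial bound for $0<\operatorname{Im}\lambda\le L^{-1}$. So you should quote the gap theorem on the full strip $\{\operatorname{Im}\lambda\ge-\beta,\ \operatorname{Re}\lambda\ge C\}$, as such theorems are stated, not only on $-\beta\le\operatorname{Im}\lambda\le 0$. Then no Phragm\'en--Lindel\"of argument or a priori $e^{C\lambda}$ bound is needed, which is just as well, since meromorphic continuation alone does not supply such a bound. Second, the upper-half-plane estimate $\|R_{\tilde M}(\lambda)\|\le(2\operatorname{Re}\lambda\operatorname{Im}\lambda)^{-1}$ is just the spectral theorem; no elliptic gain is involved.
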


\begin{lem}[Half-log improvement away from trapping \cite{DatchevVasy2012}]
\label{lem:DV12}
Suppose $\tilde{M}$ is an even asymptotically hyperbolic manifold satisfying \eqref{eq:XZ-bound}.
Let $\chi,\chi_1\in C_c^\infty(\tilde M)$ with $\chi$ supported in any neighborhood of the trapped set and $\chi_1$ supported in the non-trapping region. Then for $\lambda\geqslant C$,
\begin{equation}
\label{eq:DV-bound}
\|\chi_1 R_{\tilde M}(\lambda\pm i0)\chi\|_{L^2(\tilde{M})\to L^2(\tilde{M})}\leqslant C\lambda^{-1}\sqrt{\log\lambda}.
\end{equation}
\end{lem}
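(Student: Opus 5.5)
The statement is Lemma~\ref{lem:DV12}, which the paper uses as a black box from Datchev--Vasy. The plan is to reproduce the Datchev--Vasy mechanism: a $\sqrt{\log\lambda}$ bound between the nontrapping region and the trapped region follows from the full $\log\lambda$ bound \eqref{eq:XZ-bound} together with a nontrapping propagation argument. Write $h=\lambda^{-1}$ and $R=R_{\tilde M}(\lambda\pm i0)$, and fix the sign (say $+$, outgoing). Take $\chi_0\in C_c^\infty(\tilde M)$ equal to $1$ near $\supp\chi\cup\supp\chi_1$. The key identity is the $TT^*$-type estimate coming from the imaginary part of the resolvent. For $f\in L^2$ and $u=R\chi f$, pairing $P_h u$ with $u$ and taking imaginary parts gives
\[
\operatorname{Im}\langle \chi f, R\chi f\rangle = \text{(flux at infinity)} \ge 0 .
\]
This fact alone is not enough. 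The cleaner route, following \cite{DatchevVasy2012}, is the one below.

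\textbf{Step 1 (resolvent identity with an absorbing operator).} Replace $P_h$ by $P_h-iQ$, where $Q\ge0$ is a semiclassical cutoff (complex absorbing potential) supported near $\pi(K)$, microlocally equal to $1$ near $K$ on the energy surface. The operator $P_h-iQ$ is nontrapping, so $\|\chi_0(P_h-iQ)^{-1}\chi_0\|\lesssim h^{-1}$ by propagation (Lemma~\ref{lem:propagation}) together with the Cardoso--Vodev / nontrapping control at infinity. Writing $R_Q=(P_h-iQ)^{-1}$, the resolvent identities read
\[
R = R_Q - iR\,Q\,R_Q,\qquad R = R_Q - i R_Q\,Q\,R .
\]
From these, $\chi_1 R\chi = \chi_1R_Q\chi - i\,\chi_1 R_Q Q\, R\chi$.

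\textbf{Step 2 (the square-root gain).} The point is to bound $\|Q^{1/2}R\chi\|$ and $\|\chi_1R_Q Q^{1/2}\|$ separately, each by the square root of a product. For the first factor, use
\[
\|Q^{1/2}R\chi f\|^2=\langle QR\chi f,R\chi f\rangle .
\]
Write $Q=\operatorname{Im}$-part of $-(P_h-iQ)$ and pair against $u=R\chi f$, which satisfies $P_h u=h^2\chi f$ after rescaling. Using that the flux term has a sign for the outgoing resolvent, one gets
\[
\|Q^{1/2}u\|^2\lesssim h^{2}|\langle \chi f,u\rangle|\lesssim h^{2}\|\chi R\chi\|\,\|f\|^2\lesssim h\log(1/h)\|f\|^2,
\]
so $\|Q^{1/2}R\chi\|\lesssim (h\log(1/h))^{1/2}$ in the $P_h$ normalization. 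For the other factor, the same positivity argument for the nontrapping operator gives $\|\chi_1R_QQ^{1/2}\|\lesssim h^{-1/2}$; equivalently this is the standard bound $\|Q^{1/2}R_Q^*\chi_1\|\lesssim h^{-1/2}\|\chi_1R_Q\chi_1\|^{1/2}$. Multiplying the two bounds gives $\sqrt{\log(1/h)}$ in the semiclassical normalization. After converting back to $R_{\tilde M}=h^2(P_h)^{-1}$, this is $\lambda^{-1}\sqrt{\log\lambda}$, and the term $\chi_1R_Q\chi=O(\lambda^{-1})$ is harmless.

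\textbf{Main obstacle.} The delicate part is getting the signs and the microlocalization right. The absorption $Q$ must be placed on the trapped side so that $\chi_1$ (nontrapping) never sees trapping through $R_Q$. The positivity identity must also be justified for the limiting resolvent on a noncompact, even AH manifold, where $u$ is not in $L^2$; this uses the outgoing radiation condition and boundary pairing, or Vasy's method, which is available by evenness. I would first verify the nontrapping bound for $R_Q$ by combining Lemma~\ref{lem:propagation} (every backward bicharacteristic either enters $\{Q>0\}$ or escapes to the region controlled by Proposition~\ref{prop:cv}) with the elliptic estimate Lemma~\ref{lem:elliptic} off $\Sigma$.
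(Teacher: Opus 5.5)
\textbf{There is a genuine gap in the first half of your Step~2.} The paper gives no proof of this lemma; it imports it from \cite{DatchevVasy2012} as a black box. So your sketch has to stand on its own.

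Take $u=R\chi f$ with the undamped resolvent. In the normalization $R=P_h^{-1}$ it satisfies $P_hu=\chi f$. Since $P_h$ is formally self-adjoint, $\operatorname{Im}\langle P_hu,u\rangle$ is purely a boundary flux at infinity, and $Q$ does not appear in it. Writing $Q=-\operatorname{Im}(P_h-iQ)$ produces $\langle Qu,u\rangle$ only when $u$ solves $(P_h-iQ)u=g$, i.e.\ for $u=R_Qg$. That is how you correctly get $\|\chi_1R_QQ^{1/2}\|\lesssim h^{-1/2}$, but it says nothing about $Q^{1/2}R\chi$.

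Worse, the bound you need is false. Given $\|\chi_1R_QQ^{1/2}\|\lesssim h^{-1/2}$, the splitting $(\chi_1R_QQ^{1/2})(Q^{1/2}R\chi)$ would require $\|Q^{1/2}P_h^{-1}\chi\|\lesssim h^{-1/2}\sqrt{\log(1/h)}$. Take a compactly supported WKB quasimode $v$ with $\|v\|=1$ and $\|P_hv\|=O(h)$. Microlocalize it at a point $\rho\in\Sigma$ near $K$ where $Q$ is elliptic and $\pi(\rho)$ lies in the interior of $\{\chi=1\}$. Then $P_h^{-1}\chi P_hv=v$, so $\|Q^{1/2}P_h^{-1}\chi\|\gtrsim h^{-1}$. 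This lower bound holds even without trapping. So with the absorber placed at the trapped set, the two-factor estimate cannot close.

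Your displayed chain also mixes normalizations. With $u=R_{\tilde M}\chi f$ one has $h^2\|\chi R_{\tilde M}\chi\|\sim h^3\log(1/h)$, not $h\log(1/h)$. And a semiclassical bound $\sqrt{\log(1/h)}$ converts under $R_{\tilde M}=h^2P_h^{-1}$ to $\lambda^{-2}\sqrt{\log\lambda}$, not $\lambda^{-1}\sqrt{\log\lambda}$.

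\textbf{The missing idea is the outgoing flux at infinity}, which is exactly the identity you set aside as ``not enough.'' With $u=P_h^{-1}\chi f$, the quantity $|\operatorname{Im}\langle\chi f,u\rangle|$ controls the outgoing radiation of $u$, and it is bounded by $\|\chi P_h^{-1}\chi\|\,\|f\|^2\lesssim h^{-1}\log(1/h)\|f\|^2$. Suppose the outgoing condition is modelled by an absorber $W\ge0$ placed near infinity rather than near $\pi(K)$. Then this reads $\|W^{1/2}u\|^2\le|\langle\chi f,u\rangle|$.

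Now take $\rho\in\Sigma$ over $\supp\chi_1$. Since $\rho\notin K$, there are two cases.
\begin{enumerate}
\item If the backward trajectory of $\rho$ escapes, forward propagation from incoming infinity, where $u$ carries no data, gives the nontrapping bound $h^{-1}\|f\|$.
\item Otherwise $\rho\in\Gamma_+\setminus\Gamma_-$ and its forward trajectory escapes. Here you must propagate backward from the outgoing region, against the favourable direction. In the positive-commutator argument the damping term then has the wrong sign and enters the squared estimate as $h^{-1}\|W^{1/2}u\|^2$.
\end{enumerate}
Combining the two cases gives $\|\chi_1u\|^2\lesssim h^{-1}\|W^{1/2}u\|^2+h^{-2}\|f\|^2\lesssim h^{-2}\log(1/h)\|f\|^2$. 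After $R_{\tilde M}=h^2P_h^{-1}$ this is exactly \eqref{eq:DV-bound}.

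In this argument your Step~1 and the nontrapping bound for $R_Q$ play no role. That bound would in any case not follow from Proposition~\ref{prop:cv}, which concerns the undamped operator, without a separate gluing argument.
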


In addition we use the Cardoso--Vodev estimate from Proposition~\ref{prop:cv}: for any $\chi_\infty\in C_c^\infty(M_{a})$ supported sufficiently far in the non-trapping part,
\[
\|\chi_\infty R_M(\lambda\pm i0)\chi_\infty\|_{L^2\to L^2}\leqslant C\lambda^{-1}, \quad \lambda\geqslant C.
\]

\begin{proof}[Proof of Theorem~\ref{Thm}]

We set
\begin{equation*}
    u=(-\Delta_g-(n-1)^2/4-(\lambda\pm i0)^2)^{-1}\chi f.
\end{equation*}

\emph{Key estimate} Let $U_C \subset \{x\in M \text{ such that }g(x)=\tilde{g}(x) \}$ be a neighborhood of $\pi(K)$. And $M_a$ as in Proposition~\ref{prop:cv}. Choose any neighborhood $U_I$ of $M\setminus(M_a\cup M_C)$. Then the set $\{U_C, U_I,M_a\}$ forms an open cover of $M$, choose a partition of unity subordinate to this open cover, called $\{\chi_0,\chi_1,\chi_2\}$, hence we have
\[
1=\chi_0+\chi_1+\chi_2.
\]
We can choose another partition of unity $\eta_0,\eta_1,\eta_2$ such that 
\begin{equation}\label{eq:p.o.u}
    1=\sum_i\eta_i,\quad\chi_0=\eta_0, \chi_1\le \eta_1,\quad \text{and }\operatorname{supp}\eta_2\subset \{\chi_2=1\}. 
\end{equation}
We may also without loss of generality assume $\pi(K)\subset \operatorname{Int}\{\chi_0=1\}$.

\begin{prop}\label{prop:propagation}
    Let $\chi_L\in C_c^\infty(M)$ be any cut-off function such that $\supp \chi_:$ is disjoint from $\pi(K)$, then there exist $C>0$, $\chi\in C_c^\infty(M)$, and  $\chi_\infty\in C_c^\infty(M_a)$, which depends on $\chi_L$, such that $\operatorname{supp} \chi_\infty \subset \{\chi_2=1\}$, we have
    \begin{equation}\label{eq:propagation}
\|\chi_{L} u\|_{L^2} \lesssim \|\chi_\infty u\|_{L^2}  +\lambda^{-1}\|\chi f\|_{L^2},\quad \lambda\ge C.
\end{equation}
\end{prop}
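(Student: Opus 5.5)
We prove \eqref{eq:propagation} by propagating backward along the geodesic flow and using Lemma~\ref{lem:propagation}, together with the elliptic estimate Lemma~\ref{lem:elliptic}. Set $h=\lambda^{-1}$. The function $u$ satisfies $(P_h+O(h))u = h^2\chi f$ up to the lower-order spectral shift, which we absorb into $P_h$. We treat the outgoing resolvent $R_M(\lambda+i0)$; the incoming case is identical with the flow reversed.

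\textbf{Step 1: elliptic region.} Choose $A_0\in\Psi^0_h$ with symbol $\chi_L$ away from a small neighbourhood of $S^*M$. By Lemma~\ref{lem:elliptic},
\[
\|A_0u\|_{L^2}\lesssim \|P_hu\|_{H_h^{-2}}+O(h^\infty)\|\tilde\chi u\|_{L^2}\lesssim h^2\|\chi f\|_{L^2}+O(h^\infty)\|\tilde\chi u\|_{L^2}.
\]
It therefore suffices to bound $\|Au\|_{L^2}$, where $A$ is microlocalized near $S^*M\cap\pi^{-1}(\supp\chi_L)$.

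\textbf{Step 2: backward flow reaches $M_a$.} Let $\rho\in S^*M$ with $\pi(\rho)\in\supp\chi_L$. Since $\supp\chi_L\cap\pi(K)=\emptyset$, we have $\rho\notin K$.
\begin{itemize}
\item If $\rho\notin\Gamma_+$, its backward trajectory escapes. By Lemma~\ref{escape}, applied to the reversed flow, it enters $\{r>a\}$ and stays there for all large negative times.
\item If $\rho\in\Gamma_+\setminus\Gamma_-$, its backward trajectory is bounded. We handle this case by the radial/outgoing argument: for the outgoing resolvent $u$, there is no incoming wave from infinity, so we propagate forward instead. The forward trajectory escapes to $\{r>a\}$ with $\nu>0$, which is the outgoing region.
\end{itemize}
Concretely, the standard fact is that $R(\lambda+i0)$ admits the propagation estimate in the direction of $-H_p$ from the region where $\nu<0$ near infinity. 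Equivalently, the wavefront of $u$ near infinity lies only in outgoing directions. So for points of the first type we take $B$ elliptic near $\varphi_{-T}(\rho)\in\{r>a\}$. By compactness of $S^*M\cap\pi^{-1}(\supp\chi_L)$, finitely many times $T$ suffice, and the supports of $B$, $B_1$ can be chosen to project into a compact subset of $M_a\cup\{\text{neighbourhood of the trajectories}\}$. Since these trajectories avoid $K$, they remain in a compact set.

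\textbf{Step 3: conclusion.} Choose $\chi_\infty\in C_c^\infty(M_a)$ with $\supp\chi_\infty\subset\{\chi_2=1\}$ and $\chi_\infty=1$ on $\pi(\WF_h B)$. Lemma~\ref{lem:propagation} with $s=0$ then gives
\[
\|Au\|\lesssim\|\chi_\infty u\|+h^{-1}\|B_1P_hu\|_{H_h^{-1}}+O(h^\infty)\|\tilde\chi u\|,
\]
and $h^{-1}\|B_1P_hu\|\lesssim h\|\chi f\|=\lambda^{-1}\|\chi f\|$. The $O(h^\infty)\|\tilde\chi u\|$ remainder is absorbed using the a priori polynomial bound $\|\tilde\chi u\|\lesssim \lambda^{N}\|\chi f\|$. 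On an AH manifold with compact hyperbolic-type trapping this follows from Burq/Cardoso--Vodev-type exponential bounds only if $N$ is polynomial; otherwise, iterate the argument to use $\|\tilde\chi u\|_{H_h^{-N}}$ with $O(h^\infty)$ decay against the exponential bound $e^{C\lambda}$ is insufficient.

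\textbf{Main obstacle.} This remainder and the choice of direction are the delicate points. Points of $\Gamma_+\setminus\Gamma_-$ cannot be reached backward from infinity, so for those I would first try to show that $\supp\chi_L$ can be restricted to directions whose backward flow escapes. If that fails, I would use the outgoing radial estimate at infinity, from the Cardoso--Vodev positive commutator argument, to control these directions by $\chi_\infty u$ as well. To absorb the remainder, I would use the a priori bound coming from Proposition~\ref{prop:cv}-type estimates combined with the black-box bound \eqref{eq:XZ-bound} on the glued model, or a bootstrap in $N$.
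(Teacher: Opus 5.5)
\textbf{There are two gaps, and both sit where you flag your ``main obstacle''.}

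\textbf{First gap: the directions in $\Gamma_+\setminus\Gamma_-$.} You never actually close this case. You hedge between two options, and neither works or is needed.
\begin{itemize}
\item Restricting $\supp\chi_L$ to backward-escaping directions is impossible. $\chi_L$ is a function on $M$, so $\chi_L u$ involves every direction over $\supp\chi_L$. In general the fibres over points near $\pi(K)$ meet both $\Gamma_+\setminus\Gamma_-$ and $\Gamma_-\setminus\Gamma_+$.
\item An outgoing radial estimate at infinity is not needed.
\end{itemize}
The outgoing/incoming nature of $u$ plays no role in \eqref{eq:propagation}. The right-hand side keeps $\|\chi_\infty u\|_{L^2}$; it does not assume $u$ is small anywhere. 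Since $p=|\xi|_g^2-1$ is real, Lemma~\ref{lem:propagation} applies verbatim to $-P_h$, whose Hamiltonian flow is $\varphi_{-t}$. So for $\rho\in\Gamma_+\setminus\Gamma_-$ you simply take $B$ elliptic near $\varphi_T(\rho)\in\{\chi_2=1\}$.

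This is what the paper does. Using that $\Gamma_\pm$ are closed (Proposition~\ref{prop:trapped_compact}), it covers $\WF_h(A_\Sigma)$ by finitely many sets $W_k$, each disjoint from $\Gamma_+$ or from $\Gamma_-$, with a uniform escape time $T$ (Lemma~\ref{escape}). It then propagates backward or forward accordingly, with the same $B=\chi_{\infty,1}$ in both cases. The outgoing condition only enters later, through the Cardoso--Vodev bound on $\chi_\infty R_M\chi_\infty$. Your instinct to ``propagate forward'' was right, but your justification for it is not the reason it works.

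\textbf{Second gap, the more serious one: the remainder $O(h^\infty)\|\tilde\chi u\|_{L^2}$, where $\tilde\chi$ meets $\pi(K)$.} Absorbing it via an a priori bound $\|\tilde\chi u\|\lesssim\lambda^N\|\chi f\|$ is not available. As you note yourself, the only general a priori bound is Burq's $e^{C\lambda}$. A polynomial bound on $M$ is precisely the kind of statement being proved, so ``bootstrap in $N$'' has nothing to start from.

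The missing idea is a \emph{relative} a priori bound on the trapped piece, obtained by transplanting to $\tilde M$.
\begin{enumerate}
\item Split $\tilde\chi=(\eta_0+\eta_1+\eta_2)\tilde\chi$ with $\eta_0=\chi_0$. The $\eta_2$ part lives in $\{\chi_2=1\}$ and is absorbed into $\chi_\infty$.
\item For the trapped part, $g=\tilde g$ on $\supp\chi_0$. With $\psi=1$ on $\supp\nabla\chi_0$,
\[
\chi_0u=R_{\tilde M}(\lambda\pm i0)\bigl(\chi_0\chi f+\psi[-\Delta_g,\chi_0]\psi u\bigr).
\]
Then \eqref{eq:XZ-bound} and the energy estimate of Lemma~\ref{appendix:h1} give
\[
\|\chi_0u\|\lesssim\log\lambda\,(\|\psi_1u\|+\lambda^{-1}\|\chi f\|),
\]
with $\psi_1$ supported away from $\pi(K)$ (Lemma~\ref{appendix:apriori}).
\item The remainder is now controlled by $\|\psi_1u\|$ and $\|\eta_1u\|$. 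Both cutoffs are non-trapping, so the same propagation argument applies to them.
\item Set $G(u)=\max(\|\psi_1u\|,\|\eta_1u\|)$. One gets
\[
G(u)\lesssim\|\chi_{\infty,3}u\|+\lambda^{-1}\|\chi f\|+O(\lambda^{-N})G(u),
\]
and the last term is absorbed. The $\log\lambda$ loss is harmless against $O(\lambda^{-N})$.
\end{enumerate}
Without this step, your Steps 1--3 only yield \eqref{eq:propagation} modulo an uncontrolled $O(h^\infty)\|\tilde\chi u\|$ term.
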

This proposition will be proved later.

Since $\chi_L u=\chi_LR_M(\lambda\pm i0)\chi f$, \eqref{eq:propagation} can be translated into the following:
\begin{equation}\label{thm1:key_estimate}
    \|\chi_LR_M(\lambda\pm i0)\chi\|_{L^2\to L^2}\lesssim \|\chi_\infty R_M (\lambda\pm i0)\chi\|_{L^2\to L^2}+\lambda^{-1}.
\end{equation}
    By taking the adjoint, we also have
\begin{equation*}
     \|\chi R_M (\lambda\mp i0)\chi_L\|_{L^2\to L^2}\lesssim \| \chi R_M(\lambda\mp i0)\chi_\infty\|_{L^2\to L^2}+\lambda^{-1}.
\end{equation*}    

We split the proof of Theorem~\ref{Thm} into three Propositions:

\begin{prop}[Nontrapping Case]\label{prop:Nontrappingcase}
    Suppose $\chi_L,\chi$ are all disjoint from $\pi(K)$. Then we have
\[
\|\chi_LR_M(\lambda\pm i0)\chi\|_{L^2\to L^2}\lesssim \lambda^{-1}.
\]
\end{prop}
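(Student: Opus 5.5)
We apply Proposition~\ref{prop:propagation} twice, once to $R_M(\lambda\pm i0)$ and once to its adjoint, so that both cutoffs are pushed out to the Cardoso--Vodev region. There Proposition~\ref{prop:cv} gives the bound $C\lambda^{-1}$.

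\emph{Step 1.} Since $\supp\chi_L\cap\pi(K)=\emptyset$, the estimate \eqref{thm1:key_estimate} gives a cutoff $\chi_\infty\in C_c^\infty(M_a)$ with $\supp\chi_\infty\subset\{\chi_2=1\}$ such that
\[
\|\chi_LR_M(\lambda\pm i0)\chi\|_{L^2\to L^2}\lesssim \|\chi_\infty R_M(\lambda\pm i0)\chi\|_{L^2\to L^2}+\lambda^{-1}.
\]

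\emph{Step 2.} We use $\|\chi_\infty R_M(\lambda\pm i0)\chi\| = \|\chi R_M(\lambda\mp i0)\chi_\infty\|$, which holds since $R_M(\lambda\pm i0)^*=R_M(\lambda\mp i0)$. The hypothesis is that $\chi$ is also disjoint from $\pi(K)$, and the propagation estimate holds equally for the $\mp$ resolvent, with the flow reversed. Applying Proposition~\ref{prop:propagation} with $\chi_L$ replaced by $\chi$, and with the right cutoff now equal to $\chi_\infty$, produces another cutoff $\chi_\infty'\in C_c^\infty(M_a)$ such that
\[
\|\chi R_M(\lambda\mp i0)\chi_\infty\|\lesssim \|\chi'_\infty R_M(\lambda\mp i0)\chi_\infty\|+\lambda^{-1}.
\]
We should check that the constant in Proposition~\ref{prop:propagation} depends only on the left cutoff and on some fixed cutoff dominating the right one. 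This is the case because the $O(h^\infty)\|\tilde\chi u\|$ remainder term in Lemma~\ref{lem:propagation} is absorbed by a crude a priori bound, or simply carried along.

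\emph{Step 3.} Both $\chi_\infty$ and $\chi'_\infty$ are supported in $M_a$. Choose a single $\tilde\chi_\infty\in C_c^\infty(M_a)$ with $\tilde\chi_\infty=1$ on $\supp\chi_\infty\cup\supp\chi'_\infty$. Then
\[
\|\chi'_\infty R\chi_\infty\|\le C\|\tilde\chi_\infty R\tilde\chi_\infty\|\le C\lambda^{-1}
\]
by Proposition~\ref{prop:cv}. Chaining the three inequalities gives the claimed bound $\lesssim\lambda^{-1}$.

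The main obstacle is not this chaining but Proposition~\ref{prop:propagation} itself, which is being assumed here. Its proof would work as follows. By Lemma~\ref{escape}, every bicharacteristic through $S^*M$ over $\supp\chi_L$ lies outside $K$, so it escapes to $\{r>R\}$ in one time direction. The outgoing condition selects the direction: for the resolvent, the backward-trapped points must be controlled by propagating from the region where $\nu<0$ at infinity. The ingredients are the elliptic estimate (Lemma~\ref{lem:elliptic}) off $\Sigma$ and the propagation estimate (Lemma~\ref{lem:propagation}) along finite pieces of trajectories, combined by compactness of $S^*M|_{\supp\chi_L}$. The only subtle point in the present proposition is confirming that the adjoint version carries the correct (reversed) direction, and that the $O(h^\infty)$ remainders are harmless. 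This follows from the same argument applied to $R_M(\lambda\mp i0)$.
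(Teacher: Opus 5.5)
Your Steps 1--3 are exactly the paper's argument: apply \eqref{thm1:key_estimate}, pass to the adjoint $R_M(\lambda\mp i0)$, apply the propagation estimate again with $\chi$ as the left cutoff, and close with Proposition~\ref{prop:cv}. Your Step 3, which dominates both far cutoffs by a single $\tilde\chi_\infty\in C_c^\infty(M_a)$, just makes explicit a step the paper leaves implicit; your closing sketch of Proposition~\ref{prop:propagation} is not needed, and its remark about direction is reversed (points of $\Gamma_+\setminus K$ reach infinity only in forward time, where $\nu>0$, and in the paper both escaping directions are controlled by the same far cutoff, so no sign-dependent bookkeeping is involved).
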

\begin{proof}
Suppose $\chi_L,\chi$ are all disjoint from $K$. Then we use \eqref{thm1:key_estimate},
we have
\[
\|\chi_LR_M(\lambda\pm i0)\chi\|_{L^2\to L^2}\lesssim \|\chi_\infty R_M(\lambda\pm i0)\chi\|_{L^2\to L^2}+\lambda^{-1}.
\]
By taking adjoint and use \eqref{thm1:key_estimate} again we have
\begin{equation}
    \|\chi_\infty R_M(\lambda\pm i0)\chi\|_{L^2\to L^2}=\|\chi R_M(\lambda\mp i0)\chi_\infty\|_{L^2\to L^2}\lesssim \|\chi_\infty R_M(\lambda\mp i0)\chi_{\infty,1}\|_{L^2\to L^2}+\lambda^{-1}.
\end{equation}
Then we use Proposition~\ref{prop:cv}, we have
\begin{equation}
    \|\chi_LR_M(\lambda\pm i0)\chi\|_{L^2\to L^2}\lesssim \lambda^{-1}.
\end{equation}
\end{proof}

\begin{prop}[Half-trapped case]\label{prop:Half-trappedcase}
     Suppose $\chi_L,\chi_R$ are two cutoff functions. We assume $\operatorname{supp}\chi_L\cap K\neq\emptyset$ and $\operatorname{supp}\chi_R\cap K=\emptyset$, then we have: 
\[
\|\chi_LR_M(\lambda\pm i0)\chi_R\|_{L^2\to L^2}\le \lambda^{-1}\sqrt{\log \lambda}.
\]
\end{prop}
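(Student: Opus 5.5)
The plan is to reduce the statement, via adjoints, to the situation described in the outline of the introduction (cutoff away from $\pi(K)$ on the outside). Then we compare with $\tilde M$ through a gluing identity, and finish with Proposition~\ref{prop:Nontrappingcase} and Lemma~\ref{lem:DV12}.

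\emph{Step 1: reduction to a model cutoff.} Write $\chi_L=\chi_0^2\chi_L+(1-\chi_0^2)\chi_L$. Here $\chi_0=1$ near $\pi(K)$, and $\chi_0$ is supported in the region where $g=\tilde g$. The term with $(1-\chi_0^2)\chi_L$ is supported away from $\pi(K)$, so Proposition~\ref{prop:Nontrappingcase} bounds it by $C\lambda^{-1}$. Multiplication by $\chi_L$ on the left is bounded on $L^2$. So it suffices to bound $\|\chi_0^2R_M(\lambda\pm i0)\chi_R\|$, which by $R_M(\lambda\pm i0)^*=R_M(\lambda\mp i0)$ equals $\|\chi_RR_M(\lambda\mp i0)\chi_0^2\|$.

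\emph{Step 2: gluing identity.} Fix $g\in L^2$ and set $v=R_{\tilde M}(\lambda\mp i0)\chi_0 g$. Since $\chi_0 v$ is supported where $g=\tilde g$, we have
\[
\Bigl(-\Delta_g-\tfrac{(n-1)^2}{4}-\lambda^2\Bigr)(\chi_0 v)=\chi_0^2 g+[-\Delta_g,\chi_0]v .
\]
Because $\chi_0 v$ is compactly supported, it is outgoing (resp.\ incoming) and equals $R_M(\lambda\mp i0)$ applied to the right-hand side. To justify this rigorously, I would first prove the identity for $\lambda$ with $\operatorname{Im}\lambda\neq 0$, where both resolvents are bounded on $L^2$ and the identity is pure algebra. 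Then I would let $\operatorname{Im}\lambda\to0$ in the sense of cutoff resolvents. Choosing $\psi\in C_c^\infty$ with $\psi=1$ on $\supp\nabla\chi_0$ and $\supp\psi\cap\pi(K)=\emptyset$, this gives
\[
\chi_RR_M(\lambda\mp i0)\chi_0^2=\chi_R\chi_0R_{\tilde M}(\lambda\mp i0)\chi_0-\chi_RR_M(\lambda\mp i0)\psi[-\Delta_g,\chi_0]\psi R_{\tilde M}(\lambda\mp i0)\chi_0 .
\]

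\emph{Step 3: estimating the terms.} The first term has $\chi_R\chi_0$ supported in the nontrapping region of $\tilde M$ (since $\tilde K=K$). Lemma~\ref{lem:DV12} therefore gives the bound $C\lambda^{-1}\sqrt{\log\lambda}$.

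In the second term, $\chi_R$ and $\psi$ both avoid $\pi(K)$, so $\|\chi_RR_M\psi\|_{L^2\to L^2}\lesssim\lambda^{-1}$ by Proposition~\ref{prop:Nontrappingcase}. The commutator $[-\Delta_g,\chi_0]$ is a first-order operator with compactly supported coefficients, so it maps $H^1_\lambda\to L^2$ with norm $O(\lambda)$.

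It remains to show $\|\psi R_{\tilde M}\chi_0\|_{L^2\to H^1_\lambda}\lesssim\lambda^{-1}\sqrt{\log\lambda}$. For this I would use the standard energy (or elliptic) estimate: pairing the equation with $\psi^2\bar v$ and integrating by parts gives
\[
\lambda^{-2}\|\nabla(\psi v)\|^2\lesssim\|\psi_1 v\|^2+\lambda^{-2}\|\psi_1\chi_0g\|\,\|\psi_1v\|,
\]
where $\psi_1=1$ on $\supp\psi$ and $\psi_1$ still avoids $\pi(K)$ (this is the content of Lemma~\ref{appendix:h1}). Combined with Lemma~\ref{lem:DV12} applied to $\psi_1R_{\tilde M}\chi_0$, this yields the claimed bound. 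Multiplying the three factors gives $\lambda^{-1}\cdot\lambda\cdot\lambda^{-1}\sqrt{\log\lambda}=\lambda^{-1}\sqrt{\log\lambda}$.

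\emph{Main obstacle.} The delicate point is Step 2: justifying that $\chi_0R_{\tilde M}(\lambda\mp i0)\chi_0 g$ is exactly the image under the limiting resolvent $R_M(\lambda\mp i0)$, with the correct outgoing/incoming condition on the non-even manifold $M$. I would handle this by working off the real axis and passing to the limit. This requires the existence of the cutoff limits $R_M(\lambda\pm i0)$, which is given by the Cardoso--Vodev framework. The $H^1_\lambda$ upgrade is routine by comparison.
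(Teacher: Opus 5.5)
Your proposal is correct and follows essentially the paper's argument. It uses the same gluing identity comparing $R_M$ with $R_{\tilde M}$, and the same three-factor bound $\lambda^{-1}\cdot\lambda\cdot\lambda^{-1}\sqrt{\log\lambda}$ (Proposition~\ref{prop:Nontrappingcase}, the $O(\lambda)$ commutator, and Lemma~\ref{lem:DV12} with the $H^1_\lambda$ upgrade of Lemma~\ref{appendix:h1}), together with the same splitting $\chi_L=\chi_0^2\chi_L+(1-\chi_0^2)\chi_L$. The only difference is in the model case: the paper first takes a cutoff $\tilde{\chi_2}\in C_c^\infty(M_a)$ disjoint from $\supp\chi_0$, so the direct term $\chi_0R_{\tilde M}\chi_0\tilde{\chi_2}$ vanishes, whereas you derive the (adjoint) identity for a general $\chi_R$ and bound the extra term $\chi_R\chi_0R_{\tilde M}\chi_0$ by Lemma~\ref{lem:DV12}, which also makes explicit the passage to general $\chi_R$ that the paper leaves implicit.
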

\begin{proof}
We first consider $ \chi_0^2R_M \tilde{\chi_2}$, where $\tilde{\chi_2}\in C_c^\infty(M_a)$. Before we go into the proof, we state one lemma:
\begin{lem}\label{lem:comparision}
    Suppose $\kappa$ is a compact-supported cutoff function, and $\chi_0$ is the cutoff function defined before, then we have the following identity of resolvents
    \begin{equation}
        \chi_0R_M(\lambda\pm i0)\kappa= R_{\tilde{M}}\chi_0\kappa + \left(R_{\tilde M}[-\Delta_M,\chi_0]R_{M}\kappa\right)(\lambda\pm i0).
    \end{equation}
\end{lem}
\begin{proof}
    Take $u=R_{ M}\kappa f$, then 
    \begin{equation}
\begin{split}
    (-\Delta_{\tilde M}-(n-1)^2/4-(\lambda\pm i0)^2)\chi_0u &=(-\Delta_{ M}-(n-1)^2/4-(\lambda\pm i0)^2)\chi_0 u\\
    &=\chi_0 (-\Delta_{M}-(n-1)^2/4-(\lambda\pm i0)^2)u+[-\Delta_M,\chi_0]u\\
    &=\chi_0\kappa f +[-\Delta_M,\chi_0]R_{M}\kappa f.
\end{split}
\end{equation}
By taking $R_{\tilde{M}}$, hence,
\begin{equation}
    \chi_0R_M(\lambda\pm i0)\kappa= R_{\tilde{M}}\chi_0\kappa+\left(R_{\tilde M}[-\Delta_M,\chi_0]R_{M}\kappa\right)(\lambda\pm i0).
\end{equation}
\end{proof}

By taking $\kappa=\tilde{\chi_2}$, we have
\begin{equation}\label{thm1:compare}
    \chi_0R_M(\lambda\pm i0)\tilde{\chi_2}= \left(R_{\tilde M}[-\Delta_M,\chi_0]R_{M}\tilde{\chi_2}\right)(\lambda\pm i0).
\end{equation}
Note that $[-\Delta_M,\chi_0]$ is support on $\{\chi_0'\neq 0\}$. Since we have assume that $K\subset \{\chi_0=1\}$, so there exist a $\psi$ which is $1$ on $\{\chi_0'\neq 0\}$, and $\operatorname{supp}\psi$ is disjoint from $K$. Then we have
\begin{equation}\label{thm_1:keyID}
    \chi_0^2R_M(\lambda \pm i0)\tilde{\chi_2}=(\chi_0R_{\tilde M}\psi [-\Delta_M,\chi_0]\psi R_{M}\tilde{\chi_2})(\lambda \pm i0).
\end{equation}
By taking adjoint, we have 
\begin{equation}\label{thm_1:keyID_adj}
    \tilde{\chi_2} R_M(\lambda\mp i0)\chi_0^2=-(\tilde{\chi_2} R_{{M}}\psi [-\Delta_M,\chi_0]\psi R_{\tilde M}\chi_0)(\lambda\mp i0).
\end{equation}
Our goal is to analyze the right-hand sides of \eqref{thm_1:keyID_adj}.

We choose a $\psi_1$ with compact support such that $\psi_1=1$ on $\operatorname{supp}\psi $ which is disjoint from $K$. Then
\[
\tilde{\chi_2} R_{{M}}\psi [-\Delta_M,\chi_0]\psi R_{\tilde M}\chi_0(\lambda\mp i0)=\tilde{\chi_2} R_{{M}}\psi_1 [-\Delta_M,\chi_0]\psi_1 R_{\tilde M}\chi_0(\lambda\mp i0)
\]
\begin{itemize}
    \item \emph{Term $\psi_1 R_{\tilde M}\chi_0$}
By Lemma~\ref{lem:DV12}, we have
\begin{equation*}
    \|\psi_1 R_{\tilde M}(\lambda\mp i0)\chi_0\|_{L^2\to L^2}\leqslant C\lambda^{-1}\sqrt{\log \lambda}.
\end{equation*}
By same computation in Lemma~\ref{appendix:h1} below applied to $\tilde{M}$, we have
\begin{equation}
    \|\psi R_{\tilde M}(\lambda\mp i0)\chi_0\|_{L^2\to H^{1}_\lambda}\leqslant C\lambda^{-1}\sqrt{\log \lambda}.
\end{equation}

\item\emph{Term $[\Delta,\chi_0]$}
The operator $[-\Delta, \chi_0]$ is a first order differential operator. So 
\begin{equation}\label{eq:esLaplacian}
    \begin{split}
     \|[\Delta, \chi_0]u\|^2_{L^2}&< C\|u\|_{H^1}^2=C(\|u\|_{L^2}^2+\|\nabla u\|_{L^2}^2)\\
                                &\le\lambda^2\cdot \|u\|_{H^1_\lambda}^2.
\end{split}
\end{equation}
So $[\Delta, \chi_0]$ is bounded with norm $C\lambda$ from $ H_\lambda^1(M) \to L^2(M)$.

\item\emph{Term $\tilde{\chi_2}R_M\psi_1$}
Since the cut off functions $\tilde{\chi_2}$ and $\psi_1$ are both disjoint from $K$, by Proposition~\ref{prop:Nontrappingcase}, we have $\|\tilde{\chi_2} R_M\psi_1\|_{L^2\to L^2}$ is bounded with norm $\lambda^{-1}$. 
\end{itemize}
We combine everything together:
 \begin{equation}
     \begin{split}
         \|\tilde{\chi_2}R_M(\lambda \mp i0)\chi_0^2\|_{L^2\to L^2}&=\|(\tilde{\chi_2} R_M\psi_1 [\Delta,\chi_0]\psi_1 R_{\tilde{ M}}\chi_0)(\lambda\mp i0)\|_{L^2\to L^2}\\
         &\leqslant  C \lambda^{-1}\sqrt{\log \lambda}\cdot\lambda\cdot \lambda^{-1}\\
         &= C\lambda^{-1}\sqrt{\log \lambda}.
     \end{split}
 \end{equation}
Then, by taking the adjoint again, we have the estimate of $\chi_0^2R_M(\lambda \pm i0)\tilde{\chi_2}$.

Now we consider the general case $\chi_LR_M\chi_R$. We assume $\operatorname{supp}\chi_L\cap K\neq\emptyset$ and $\operatorname{supp}\chi_R\cap K=\emptyset$, then we can consider the previous $\chi_0$: 
\[
\chi_LR_M\chi_R=\chi_L(\chi_0^2+1-\chi_0^2)R_M\chi_R=\chi_L\chi_0^2R_M\chi_R+\chi_L(1-\chi_0^2)R_M\chi_R.
\]
The first term of the right-hand side:
\[
\|\chi_L\chi_0^2R_M(\lambda\pm i0)\chi_R\|_{L^2\to L^2}\le \|\chi_0^2R_M(\lambda \pm i0)\chi_R\|_{L^2\to L^2}\lesssim \lambda^{-1}\sqrt{\log \lambda}.
\]
The second term is controlled by Proposition~\ref{prop:Nontrappingcase}.
\end{proof}
\begin{prop}[Trapped case]\label{prop:Trappedcase}
    Suppose $\chi_L$ and $\chi_R$ are two cutoff functions, 
    Then we have 
    \[
    \|\chi_LR_{M}(\lambda\pm i0)\chi_R\|_{L^2\to L^2}\lesssim \lambda^{-1}\log \lambda.
    \]
\end{prop}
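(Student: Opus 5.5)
We reduce to a model case by cutoff decompositions, as in the proof of Proposition~\ref{prop:Half-trappedcase}. Write
\[
\chi_LR_M\chi_R=\chi_L\chi_0^2R_M\chi_0\chi_R+\chi_L\chi_0^2R_M(1-\chi_0)\chi_R+\chi_L(1-\chi_0^2)R_M\chi_R .
\]
Here we take $\chi_0$ with $\pi(K)\subset\operatorname{Int}\{\chi_0=1\}$, so that $1-\chi_0$ and $1-\chi_0^2$ vanish near $\pi(K)$.

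The second and third terms have at least one cutoff supported away from $\pi(K)$. They are therefore bounded by $C\lambda^{-1}\sqrt{\log\lambda}$, using Proposition~\ref{prop:Half-trappedcase} directly or its adjoint version (via $R_M(\lambda\pm i0)^*=R_M(\lambda\mp i0)$). If both cutoffs avoid $\pi(K)$, Proposition~\ref{prop:Nontrappingcase} gives the bound instead. The multiplications by the bounded functions $\chi_L$ and $\chi_R$ are harmless. It thus suffices to bound $\|\chi_0^2R_M(\lambda\pm i0)\chi_0\|$, or equivalently, after taking adjoints, $\|\chi_0R_M(\lambda\mp i0)\chi_0^2\|$.

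For the model term we apply Lemma~\ref{lem:comparision} with $\kappa=\chi_0$, and multiply on the left by $\chi_0$:
\[
\chi_0^2R_M\chi_0=\chi_0R_{\tilde M}\chi_0^2+\chi_0R_{\tilde M}\psi[-\Delta_M,\chi_0]\psi R_M\chi_0 .
\]
As in \eqref{thm_1:keyID}, $\psi$ equals $1$ on $\supp\nabla\chi_0$ and has support disjoint from $\pi(K)$.

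The first term is controlled by Lemma~\ref{lem:XZ25-resolvent}: enlarging the cutoff to some $\tilde\chi\ge\chi_0$ gives a bound of $C\lambda^{-1}\log\lambda$. Here we use that $\chi_0$ is supported where $g=\tilde g$, so it is a legitimate cutoff on $\tilde M$ by Proposition~\ref{prop:glue}.

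For the second term we factor through $H^1_\lambda$, in the same three pieces as in the half-trapped case:
\begin{itemize}
\item $\|\chi_0R_{\tilde M}\psi\|_{L^2\to L^2}\lesssim\lambda^{-1}\sqrt{\log\lambda}$, by Lemma~\ref{lem:DV12} (in its adjoint form, since $\psi$ avoids the trapped set).
\item $\|[-\Delta_M,\chi_0]\|_{H^1_\lambda\to L^2}\lesssim\lambda$, by \eqref{eq:esLaplacian}.
\item $\|\psi R_M\chi_0\|_{L^2\to H^1_\lambda}\lesssim\lambda^{-1}\sqrt{\log\lambda}$.
\end{itemize}
Multiplying these gives $\lambda^{-1}\sqrt{\log\lambda}\cdot\lambda\cdot\lambda^{-1}\sqrt{\log\lambda}=\lambda^{-1}\log\lambda$, which is exactly the claimed rate.

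The main obstacle is the last bound, the $H^1_\lambda$ upgrade of the $L^2$ estimate $\|\psi_1R_M\chi_0\|_{L^2\to L^2}\lesssim\lambda^{-1}\sqrt{\log\lambda}$. That $L^2$ estimate itself follows from Proposition~\ref{prop:Half-trappedcase} with $\psi_1=1$ near $\supp\psi$ and $\psi_1$ avoiding $\pi(K)$. For the upgrade I would use Lemma~\ref{appendix:h1}, which I expect to be an energy identity. Set $u=R_M\chi_0f$. Pairing $(-\Delta-\tfrac{(n-1)^2}{4}-\lambda^2)u=\chi_0f$ against $\psi^2u$ and integrating by parts gives
\[
\|\psi\nabla u\|^2\lesssim\lambda^2\|\psi_1u\|^2+\|\psi_1u\|\,\|f\|+\|\nabla\psi\|_\infty\|\psi\nabla u\|\,\|\psi_1u\| .
\]
Absorbing the last term yields $\lambda^{-1}\|\psi\nabla u\|\lesssim\|\psi_1u\|+\lambda^{-1}\|f\|$. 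This holds uniformly for the limiting resolvent by passing to the limit from $\lambda\pm i\varepsilon$.

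One care point is that the first factor must be taken as an operator $L^2\to L^2$, and applied to the output of $[-\Delta_M,\chi_0]$, which lies in $L^2$. This is consistent: the chain maps $L^2\to H^1_\lambda$ via $\psi R_M\chi_0$, then $H^1_\lambda\to L^2$ via the commutator, then $L^2\to L^2$ via $\chi_0R_{\tilde M}\psi$. Assembling the pieces yields \eqref{main:eq}.
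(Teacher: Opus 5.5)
Your proposal is correct and is essentially the paper's proof. It uses the same reduction to the model term $\chi_0^2R_M\chi_0$, the same identity from Lemma~\ref{lem:comparision} with $\kappa=\chi_0$, Lemma~\ref{lem:XZ25-resolvent} for $\chi_0R_{\tilde M}\chi_0^2$, and the same factorization $\lambda^{-1}\sqrt{\log\lambda}\cdot\lambda\cdot\lambda^{-1}\sqrt{\log\lambda}$ through $H^1_\lambda$ for the commutator term.

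The only difference is where the $H^1_\lambda$ upgrade falls. The paper first takes adjoints, so the upgrade is applied to $\psi R_{\tilde M}\chi_0$ (via Lemma~\ref{lem:DV12} and the energy estimate on $\tilde M$), and only an $L^2$ bound on $\chi_0R_M\psi$ is needed. You instead upgrade $\psi R_M\chi_0$ directly on $M$ using Proposition~\ref{prop:Half-trappedcase} and Lemma~\ref{appendix:h1}, which works equally well.
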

\begin{proof}
We also first consider a special case:\ $\chi_0^2R_M\chi_0$. By taking $\kappa=\chi_0$ in Lemma~\ref{lem:comparision}, we have
\begin{equation}
    \chi_0^2 R_M(\lambda \pm i0)\chi_0=\chi_0 R_{\tilde M}(\lambda \pm i0)\chi_0^2+(\chi_0R_{\tilde M}\psi[-\Delta_M,\chi_0]\psi R_{M}\chi_0)(\lambda\pm i0).
\end{equation}
By taking adjoint, we have 
\[
\chi_0R_M(\lambda\mp i0)\chi_0^2=\chi_0^2 R_{\tilde M}(\lambda \mp i0)\chi_0-(\chi_0R_{M}\psi[-\Delta_M,\chi_0]\psi R_{\tilde M}\chi_0)(\lambda\mp i0).
\]
The first term $\chi_0^2 R_{\tilde M}\chi_0$ is bounded by $\lambda^{-1}\log \lambda$ from Lemma~\ref{lem:XZ25-resolvent}. The second term can be estimated similarly to Proposition~\ref{prop:Half-trappedcase}, which is bounded by $\lambda^{-1}\log \lambda$. Then, by taking the adjoint again, we have the estimate of $\chi_0^2 R_M(\lambda \pm i0)\chi_0$.

Now, for general $\chi_LR_M\chi_R$, we have split it into four parts:
\begin{equation}\label{thm1:finalestimate}
\begin{split}
    \chi_LR_M\chi_R=&\chi_L(\chi_0+1-\chi_0)R_M(\chi_0^2+1-\chi_0^2)\chi_R=\chi_L\chi_0R_M\chi_0^2\chi_R\\
    &+\chi_L(1-\chi_0)R_M\chi_0^2\chi_R+\chi_L\chi_0R_M(1-\chi_0^2)\chi_R\\
    &+\chi_L(1-\chi_0)R_M(1-\chi_0^2)\chi_R
\end{split}
\end{equation}
\begin{itemize}
    \item Then the first term of the right-hand side:
    \[
    \|\chi_L\chi_0R_M\chi_0^2\chi_R f\|_{L^2}\le \|\chi_0R_M\chi_0^2\chi_Rf\|_{L^2}\lesssim \lambda^{-1}\log \lambda \|\chi_R f\|_{L^2}\le \lambda^{-1}\log \lambda \| f\|_{L^2},
    \]
    hence is bounded by $\lambda^{-1}\log\lambda$.
    \item The second and the third term, by Proposition~\ref{prop:Half-trappedcase}, is bounded by $\lambda^{-1}\sqrt{\log\lambda}$.
    \item The last term, by Proposition~\ref{prop:Nontrappingcase}, is bounded by $\lambda^{-1}$.
\end{itemize}
This finishes the proof.
\end{proof}

Combine Proposition~\ref{prop:Nontrappingcase}, \ref{prop:Half-trappedcase}, and \ref{prop:Trappedcase}, we then completes the proof of Theorem~\ref{Thm}.
\end{proof}

\begin{proof}[Proof of Proposition~\ref{prop:propagation}]
Recall that
\begin{equation*}
     u=(-\Delta_g-(n-1)^2/4-(\lambda\pm i0)^2)^{-1}\chi f.
\end{equation*}
Using semiclassical parameter $h=\lambda^{-1}$, we have
\begin{equation*}
    P_hu=(-h^2\Delta_g-h^2(n-1)^2/4-1) u = h^2\chi f.
\end{equation*}
\emph{Step 1. Easy term}
Choose $A_\Sigma\in\Psi^{\mathrm{comp}}_h$ whose symbol equals to $1$ microlocally on a neighborhood of $\Sigma\cap T^*_{\operatorname{supp}\chi_1}M$ and with $\operatorname{WF}_h(\chi_L(1-A_\Sigma))\cap\Sigma=\emptyset$. On
$\operatorname{WF}_h(\chi_L(1-A_\Sigma))$ we have $p\neq0$. Choose a $B\in \Psi^{\mathrm{comp}}_h$, which is elliptic on $\operatorname{WF}_h(\chi_L(1-A_\Sigma))$, then by elliptic estimate(Lemma~\ref{lem:elliptic}):
we have
\begin{equation}\label{eq:elliptic}
\|\chi_L(1-A_\Sigma)u\|_{L^2}\lesssim \|Bh^2\chi f\|_{H^{-2}_h}+h^{N}\|\chi_e u\|_{H^{-N}_h}\lesssim \lambda^{-2}\|\chi f\|_{L^2}+h^N\|\chi_eu\|_{L^2}.
\end{equation}
It therefore suffices to estimate $A_\Sigma u$, i.e. the part of $\chi_Lu$ microlocalised near $\Sigma$.

\emph{Step 2. Partition in the phase space}

We will consider the geodesic flow, i.e.,\ $\varphi_{t}$. For the wavefront set of $A_\Sigma$ denoted as $\Lambda$.
For a small neighborhood of $K$ call $W$, and a operator $\beta_0\in\Psi^{\mathrm{comp}}_h$ and $\beta_0$ is elliptic on $K$. Then we split $A_\Sigma$:
\begin{equation}
    A_\Sigma=\sum_{k}A_k+O(h^\infty), \,A_k\in \Psi^{\mathrm{comp}}_h.
\end{equation}
With $\operatorname{WF}_h(A_k)\subset W_k$, where $W_k$ is a cover of $\Lambda$. For each $W_k$ lies in one of two cases:
\begin{itemize}
    \item $W_k\cap \Gamma_+=\emptyset$, we call this the backward escaping case.
    \item $W_k\cap \Gamma_-=\emptyset$, we call this the backward escaping case.
\end{itemize}
By the finiteness of the cover, we may assume that there exists a uniform constant $T$, such that $\varphi_{-t}(W_k)$ is contained in $\{\chi_2=1\}$ or $\varphi_t(W_k)\subset \{\chi_2=1\}$ for $t>T$. Hence, we can split the cover $\{W_k\}$ into two families of sets by forward escaping and backward escaping, namely,
\[
\{W_k\}_{k=1}^n=\{W_k\}_{k=1}^l\cup\{W_k\}_{l+1}^n,
\]
where the first $l$ sets are forward escaping, and the rest are backward escaping.
We choose $\tilde{\chi}\in \Psi^{\mathrm{comp}}_h$ whose symbol is $1$ on a neighborhood of $\Lambda$ and all flowing area $W_k$: \ $\{\varphi_{-t}(W_k), 0<t<T\}$ or $\{\varphi_{t}(W_k), 0<t<T\}$

\emph{Step 3. Propagation estimate}

Since $ P_h u=h^2\chi f$, we have
\[
h^{-1}\|B_1 P_h u\|_{H_h^{-1}}\lesssim h\|\chi f\|_{L^2}=\lambda^{-1}\|\chi f\|_{L^2}.
\]
Define 
\[
\chi_{\infty,1}=1 \text{ on }\varphi_T(\{W_k\}_{1}^l)+\varphi_{-T}(\{W_k\}_{l}^n) \text{ and }\operatorname{supp}\chi_{\infty,1}\subset \{\chi_2=1\}.
\]
Apply \eqref{eq:propagation_estimate} to each $A_k$,
\begin{itemize}
    \item For forward escaping $W_k$, set $B=\chi_{\infty,1}$, we have
    \begin{equation}
        \|A_k u\|_{L^2}\le C\|\chi_{\infty,1} u\|_{L^2}+C\lambda^{-1}\|\chi f\|_{L^2}+O(h^\infty)\|\tilde\chi u\|_{L^2} .
    \end{equation}
    \item For backward escaping $W_k$, also set $B=\chi_{\infty,1}$, we have the same inequality.
\end{itemize}
We sum them and the easy case in Step 1 together, then we have
\begin{equation}\label{eq:raw}
\|\chi_Lu\|_{L^2}\lesssim \|\chi_{\infty,2} u\|_{L^2}+\lambda^{-1}\|\chi f\|_{L^2}+O(\lambda^{-N})\|\tilde\chi u\|_{L^2} ,
\end{equation}

\emph{Step 4. Absorption}

We will estimate the last term of \eqref{eq:raw}. Fix once and for all:
\begin{itemize}
\item $\psi\in C_c^\infty(M)$ with $\psi=1$ on $\operatorname{supp}\nabla\chi_0$ and $\operatorname{supp}\psi\cap K=\emptyset$;
\item $\psi_1\in C_c^\infty(M)$ with $\psi_1=1$ on $\operatorname{supp}\nabla\psi\cup\operatorname{supp}\psi$ and $\operatorname{supp}\psi_1\cap K=\emptyset$;
\item $\chi_c\in C_c^\infty(U_C)$ with $\chi_c=1$ on $\operatorname{supp}\chi_0\cup\operatorname{supp}\psi$.
\end{itemize}
Before we go into the estimate, we list two lemmas:
\begin{lem}[$H^1_\lambda$ control away from the source]\label{appendix:h1}
For $\lambda\ge C$,
\begin{equation}\label{eq:h1}
\|{\psi u}\|_{H^1_\lambda}\lesssim\|{\psi_1 u}\|_{L^2}+\lambda^{-1}\|{\chi f}\|_{L^2}.
\end{equation}
\end{lem}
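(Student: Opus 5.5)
This is a standard energy (Caccioppoli-type) estimate. We pair the equation against a localized copy of $u$ and integrate by parts. Recall that $u$ solves
\[
(-\Delta_g-\tfrac{(n-1)^2}{4}-\lambda^2)u=\chi f
\]
(after taking the limit $\pm i0$; for the limiting resolvent we may first work with $\lambda^2\pm i\epsilon$ and let $\epsilon\to0$, the imaginary part only helping or dropping out after taking real parts). We multiply by $\psi^2\bar u$ and integrate. Since $\psi$ has compact support, there are no boundary terms, and we obtain
\[
\int \psi^2|\nabla u|^2\,dv_g+2\int \psi\,\bar u\,\nabla\psi\cdot\nabla u\,dv_g-\Big(\tfrac{(n-1)^2}{4}+\lambda^2\Big)\int\psi^2|u|^2\,dv_g=\int\psi^2\chi f\,\bar u\,dv_g .
\]

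Taking real parts, we bound the cross term by Cauchy--Schwarz:
\[
2\Big|\int\psi\bar u\nabla\psi\cdot\nabla u\Big|\le\tfrac12\|\psi\nabla u\|^2+2\|\nabla\psi\, u\|^2 .
\]
Because $\psi_1=1$ on $\operatorname{supp}\nabla\psi\cup\operatorname{supp}\psi$, the second piece is at most $C\|\psi_1u\|^2$. The right-hand side is at most
\[
\|\psi\chi f\|\,\|\psi u\|\le \lambda^{-2}\|\chi f\|^2\cdot\lambda^2/2+\|\psi_1u\|^2/2,
\]
or more simply $\lambda\,\|\chi f\|\,\|\psi_1 u\|$. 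Absorbing $\tfrac12\|\psi\nabla u\|^2$ on the left yields
\[
\|\psi\nabla u\|^2\lesssim \lambda^2\|\psi_1u\|^2+\|\chi f\|\,\|\psi_1u\|\lesssim \lambda^2\|\psi_1 u\|^2+\lambda^{-2}\|\chi f\|^2 .
\]
The last step uses $ab\le \lambda^2a^2+\lambda^{-2}b^2$ with $a=\|\psi_1u\|$ and $b=\|\chi f\|$.

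Dividing by $\lambda^2$ gives
\[
\lambda^{-2}\|\psi\nabla u\|^2\lesssim\|\psi_1u\|^2+\lambda^{-4}\|\chi f\|^2 .
\]
We then pass from $\psi\nabla u$ to $\nabla(\psi u)=\psi\nabla u+u\nabla\psi$. The extra piece contributes $\lambda^{-2}\|\psi_1u\|^2$, which is harmless. Together with the trivial bound $\|\psi u\|\le\|\psi_1u\|$, this gives \eqref{eq:h1}, and in fact with the better error $\lambda^{-2}\|\chi f\|$.

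The only delicate point is justifying the integration by parts for the outgoing/incoming limit $u=R_M(\lambda\pm i0)\chi f$, which is not in $L^2$. I would handle this by noting that $u$ is smooth (elliptic regularity) and $H^2_{\rm loc}$, and that all integrals are localized by the compactly supported $\psi$. Thus the identity holds for the limiting $u$ directly, with no global integrability needed. I expect no real obstacle beyond this bookkeeping; the same computation on $\tilde M$ gives the version used in Proposition~\ref{prop:Half-trappedcase}.
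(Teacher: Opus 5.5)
Your argument is correct and is essentially the paper's proof: pair the equation with $\psi^2\bar u$, take real parts, and use $|\nabla\psi|\lesssim\psi_1$ and $\psi\le\psi_1$. The only difference is that the paper handles the cross term with the exact identity $\operatorname{Re}\langle-\Delta_g u,\psi^2u\rangle=\|\nabla(\psi u)\|_{L^2}^2-\|u\nabla\psi\|_{L^2}^2$ rather than your Cauchy--Schwarz absorption; your sharper error $\lambda^{-2}\|\chi f\|_{L^2}$ is right, and note that for $f\in L^2$ you only get $u\in H^2_{\mathrm{loc}}$, not smoothness, though that is all the integration by parts needs.
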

\begin{proof}
Pair $(-\Delta-(n-1)^2/4-(\lambda\pm i0)^2)u=\chi f$ with $\psi^2 u$ and take real parts. Using the standard identity
$\langle -\Delta_g u,\psi^2u\rangle=\|{\nabla(\psi u)}\|^2_{L^2}-\|{u\,\nabla\psi}\|^2_{L^2}$
, we get
\[
\|{\nabla(\psi u)}\|_{L^2}^2\le\|{u\,\nabla\psi}\|_{L^2}^2+\big(\lambda^2+\frac{(n-1)^2}{4}\big)\|{\psi u}\|_{L^2}^2+\|{\chi f}\|_{L^2}\|{\psi^2u}\|_{L^2}.
\]
Since $|\nabla\psi|\lesssim\psi_1$ and $\psi\le\psi_1$, dividing by $\lambda^2$ and using $ab\le a^2+b^2$ on the last term gives
\[
\lambda^{-2}\|{\nabla(\psi u)}\|_{L^2}^2\lesssim\|{\psi_1u}\|_{L^2}^2+\lambda^{-2}\|{\chi f}\|_{L^2}^2,
\]
which together with $\|{\psi u}\|_{L^2}\le\|{\psi_1 u}\|_{L^2}$ is \eqref{eq:h1}, recalling the definition \eqref{eq:sobolev} of $H^1_\lambda$.
\end{proof}
The second lemma is:
\begin{lem}\label{appendix:apriori}
For $\lambda\ge C$,
\begin{equation}\label{eq:apriori}
\|{\chi_0 u}\|_{L^2}\lesssim\log\lambda(\|{\psi_1 u}\|_{L^2}+\lambda^{-1}\|{\chi f}\|_{L^2}).
\end{equation}
\end{lem}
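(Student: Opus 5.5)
The plan is to prove \eqref{eq:apriori} with the gluing identity of Lemma~\ref{lem:comparision}, used directly on $u$ rather than at the operator level. The point is that $\chi_0 u$ is, up to a commutator term, the outgoing solution of the equation on the even manifold $\tilde M$. Since $\chi_0$ is supported in $U_C$, where $g=\tilde g$, the computation in the proof of Lemma~\ref{lem:comparision} gives
\[
\bigl(-\Delta_{\tilde M}-\tfrac{(n-1)^2}{4}-(\lambda\pm i0)^2\bigr)\chi_0 u=\chi_0\chi f+[-\Delta_M,\chi_0]u .
\]
Here $\chi_0 u$ is compactly supported. Outgoing uniqueness of the limiting resolvent on $\tilde M$ (valid for $\lambda$ large, where $\lambda^2$ is not a resonance) then gives
\[
\chi_0 u=R_{\tilde M}(\lambda\pm i0)\chi_0\chi f+R_{\tilde M}(\lambda\pm i0)[-\Delta_M,\chi_0]u.
\]

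Next I would localize both terms. Multiplying by $\chi_c$, which equals $1$ on $\operatorname{supp}\chi_0$, leaves the left side unchanged. The right side is unchanged if we insert $\chi_c$ in the first term and $\psi$ in the second, because $\psi=1$ on $\operatorname{supp}\nabla\chi_0$ and $\chi_c=1$ on $\operatorname{supp}\psi$:
\[
\chi_0u=\chi_cR_{\tilde M}\chi_c\,(\chi_0\chi f)+\chi_cR_{\tilde M}\psi\,[-\Delta_M,\chi_0]\,\psi u .
\]
For the first term, Lemma~\ref{lem:XZ25-resolvent} applied with the cutoff $\chi_c$ on $\tilde M$ gives the bound $C\lambda^{-1}\log\lambda\,\|\chi f\|_{L^2}$. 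For the second term I would use Lemma~\ref{lem:DV12} in adjoint form. Since $(\chi_cR_{\tilde M}(\lambda\pm i0)\psi)^*=\psi R_{\tilde M}(\lambda\mp i0)\chi_c$, and $\psi$ is supported in the nontrapping region, this operator has norm $\lesssim\lambda^{-1}\sqrt{\log\lambda}$, which is at most $\lambda^{-1}\log\lambda$.

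It remains to bound the commutator factor. $[-\Delta_M,\chi_0]$ is a first-order operator supported where $\psi=1$, so by \eqref{eq:esLaplacian}
\[
\|[-\Delta_M,\chi_0]\psi u\|_{L^2}\lesssim\lambda\|\psi u\|_{H^1_\lambda}.
\]
Lemma~\ref{appendix:h1} then gives $\lambda\|\psi u\|_{H^1_\lambda}\lesssim\lambda\|\psi_1u\|_{L^2}+\|\chi f\|_{L^2}$. Multiplying by the resolvent bound $\lambda^{-1}\sqrt{\log\lambda}$ gives
\[
\|\chi_0 u\|_{L^2}\lesssim\log\lambda\,\bigl(\|\psi_1 u\|_{L^2}+\lambda^{-1}\|\chi f\|_{L^2}\bigr),
\]
which is \eqref{eq:apriori}.

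The main obstacle I expect is justifying the identity for $\chi_0u$, namely that $\chi_0 u$ is the outgoing (respectively incoming) solution on $\tilde M$ so that $R_{\tilde M}(\lambda\pm i0)$ may be applied. Because $\chi_0u$ is compactly supported, I would handle this by the limiting absorption principle. For $\operatorname{Im}\lambda\neq0$, $u_\epsilon=R_M(\lambda\pm i\epsilon)\chi f$ lies in $L^2$, so $\chi_0u_\epsilon\in H^2$ with compact support, and the identity holds exactly with $R_{\tilde M}(\lambda\pm i\epsilon)$. I would then let $\epsilon\to0$ in weighted spaces, using that the limits of both resolvents exist between compactly supported cutoffs. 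A secondary point is that Lemma~\ref{lem:DV12} only requires $\chi_c$ to be supported near the trapped set and $\psi$ away from it, and both conditions hold by construction.
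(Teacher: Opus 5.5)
Your proposal is correct and follows the paper's proof: the same identity expressing $\chi_0 u$ through $R_{\tilde M}$, the localization by $\chi_c$ and $\psi$, the $O(\lambda)$ bound for $[-\Delta_g,\chi_0]\colon H^1_\lambda\to L^2$, and Lemma~\ref{appendix:h1}. The only differences are minor. For the commutator term you use Lemma~\ref{lem:DV12}, which gives $\sqrt{\log\lambda}$, whereas the paper applies Lemma~\ref{lem:XZ25-resolvent} to $\chi_c R_{\tilde M}\chi_c$ for both terms; you also supply a limiting-absorption justification of the identity, which the paper takes for granted.
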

\begin{proof}
On $\operatorname{supp}\chi_0\subset U_C$ we have $g=\tilde g$, hence
\[
\big(-\Delta_{\tilde g}-\frac{(n-1)^2}4-(\lambda\pm i0)^2\big)(\chi_0u)
=\chi_0\chi f+[-\Delta_g,\chi_0]\,u
=\chi_0\chi f+\psi[-\Delta_g,\chi_0]\psi\,u ,
\]
where we used $\psi=1$ on $\operatorname{supp}\nabla\chi_0$. So we have
\[
\chi_0u = R_{\tilde{M}}(\lambda)\left[\chi_0\chi f+\psi[-\Delta_g,\chi_0]\psi u\right].
\]
The source is supported in $\{\chi_c=1\}$, so $\chi_0u=\chi_cR_{\tilde{M}}(\lambda)\chi_c\left[\cdots\right]$ and by Lemma~\ref{lem:XZ25-resolvent} applies:
\begin{equation}
\begin{split}
    \|{\chi_0u}\|_{L^2}\le\|{\chi_cR_{\tilde{M}}(\lambda)\chi_c}\|_{L^2\to L^2}\left(\|{\chi f}\|_{L^2}+\|{[-\Delta_g,\chi_0]\psi u}\|_{L^2}\right)\\
\lesssim\lambda^{-1}\log\lambda\left(\|{\chi f}\|_{L^2}+\|{[-\Delta_g,\chi_0]\psi u}\|_{L^2}\right).
\end{split}  
\end{equation}

It's can be shown easily, the first-order operator $[-\Delta_g,\chi_0]$ is bounded $H^1_\lambda\to L^2$ with norm $C\lambda$, so by Lemma~\ref{appendix:h1}
\[
\|{[-\Delta_g,\chi_0]\psi u}\|_{L^2}\lesssim\lambda\|{\psi u}\|_{H^1_\lambda}\lesssim\lambda\|{\psi_1u}\|_{L^2}+\|{\chi f}\|_{L^2}.
\]
Combining the two estimates yields \eqref{eq:apriori}.
\end{proof}

For the last term of \eqref{eq:raw}, by \eqref{eq:p.o.u}, we have 
\[
\|\tilde{\chi} u\|_{L^2}=\|(\eta_0+\eta_1+\eta_2)\tilde{\chi}u\|_{L^2}\le \|\eta_1 u\|_{L^2}+\|\eta_2\tilde{\chi} u\|_{L^2}+\|\chi_0 u\|_{L^2}.
\]
Then by Lemma~\ref{appendix:apriori}, we have
\begin{equation*}
    \|\tilde{\chi} u\|_{L^2}\lesssim \|\eta_1 u\|_{L^2}+\|\chi_2\tilde{\chi} u\|_{L^2}+\log \lambda \| \psi_1 u\|_{L^2}+\lambda^{-1}\log \lambda \|\chi f\|_{L^2}.
\end{equation*}
Therefore in \eqref{eq:raw}, we have
\begin{equation}
    \|\chi_Lu\|_{L^2}\lesssim \|\chi_{\infty,2}u\|_{L^2}+\lambda^{-1}\|\chi f\|_{L^2}+O(\lambda^{-N})\left(\|\psi_1 u\|_{L^2}+\|\eta_1 u\|_{L^2}+\|\chi_2\tilde{\chi} u\|_{L^2}\right).
\end{equation}
Since $\operatorname{supp}\eta_2\subset\{\chi_2=1\}$, we can absorb $\eta_2\tilde{\chi}$ into $\chi_{\infty,2}$. After doing this, we have
\begin{equation}
    \|\chi_Lu\|_{L^2}\lesssim \|\chi_{\infty,2}u\|_{L^2}+\lambda^{-1}\|\chi f\|_{L^2}+O(\lambda^{-N})\left(\|\psi_1 u\|_{L^2}+\|\eta_1 u\|_{L^2}\right).
\end{equation}
Define $G(u):=\sup\{\|\psi_1u\|_{L^2},\|\eta_1 u\|_{L^2}\}$. Then we have
\begin{equation}
    G(u)\lesssim\|\chi_{\infty,3} u\|_{L^2}+\lambda^{-1}\|\chi f\|_{L^2}+ O(\lambda^{-N} G(u)).
\end{equation}
Hence 
\begin{equation}\label{eq:lowerorderterm}
    G(u)\lesssim \|\chi_{\infty,3} u\|_{L^2}+\lambda^{-1}\|\chi f\|_{L^2}.
\end{equation}
Finally, let $\chi_{\infty}\in C_c^{\infty}(M_a)$ such that $\chi_{\infty}(x)=1$ on $\supp\chi_{\infty,2}\cup \supp\chi_{\infty,3}$. By \eqref{eq:raw} and \eqref{eq:lowerorderterm},
\begin{equation}
    \|\chi_L u\|_{L^2}\lesssim \|\chi_\infty u\|_{L^2}+\lambda^{-1}\|\chi f\|_{L^2}.
\end{equation}
This finishes the proof.
\end{proof}

\begin{proof}[Proof of Corollary~\ref{cor:main}]
    The corollary follows from Theorem~\ref{Thm}, where the assumption (i) follows from Proposition~\ref{ass:CV}, the assumption (ii) follows from Proposition~\ref{prop:glue}, and the assumption (iii) follows from \cite{tao2021spectral}.
\end{proof}
    
\section{The comparison manifold and its spectral properties}\label{sec:comparison}

This section carries out the only new ingredient in the proof of Theorem~\ref{thm2}, which is geometric. To each end of $M$ we attach a model hyperbolic disc, producing a surface $(N,g_N)$ that is globally close to a hyperbolic disc, has strictly negative curvature, and has no $L^2$-eigenvalue and no resonance at the bottom of its continuous spectrum. The harmonic analysis machinery that turns these spectral properties into the Strichartz and spectral-projection estimates is borrowed from \cite{Bounded_HSTZ}. The proof of Theorem~\ref{thm2} is then given in Section~\ref{sec:thm2}.

We are looking for a two-manifold $(N,g_N)$ with the following properties:
\begin{enumerate}
\item[\rm(a)] $N$ is a finite disjoint union of simply connected asymptotically hyperbolic surfaces;
\item[\rm(b)] $g_N$ has strictly negative curvature;
\item[\rm(c)] $g_N$ agrees with $g$ on a neighborhood of infinity;
\item[\rm(d)] $-\Delta_{g_N}$ has no $L^2$-eigenvalue and no resonance at the bottom of the continuous spectrum.
\end{enumerate}
In \cite{Bounded_HSTZ} this surface is denoted $(\tilde M,\tilde g)$, here we reserve $(\tilde M,\tilde g)$ for the even AH surface of the previous sections and write $(N,g_N)$ instead, to avoid any clash of notation.

\subsection{Construction of the model manifold}\label{subsec:Nconstruction}

We use the hyperbolic disc as a model. Recall that for the hyperbolic disc $\bbH^2$,
\[
g_{\mathbb H^2}=dr^2+\sinh^2(r)\,d\theta^2,\quad
(r,\theta)\in[0,\infty)\times S^1\big/\sim,
\]
and, after the change of variable $y=\theta$, $x=e^{-r}$,
\begin{equation}\label{eq:hypfunnel}
g_{\mathbb H^2}=\frac{dx^2+h_0(x)\,dy^2}{x^2},\quad
h_0(x)=\frac{(x^2-1)^2}{4},\quad x\in(0,1].
\end{equation}

\paragraph{Filling in each end of $M$.}
Since we always work on each component separately, we may assume $\partial M=S^1$. Fix a cutoff $\chi\in C_c^\infty((-1,1))$ with $\chi=1$ on $(-1/2,1/2)$, and a large constant $R\gg 1$ to be chosen later. For the end $E\cong(0,\varepsilon)\times S^1\subset M$, define the model disk
\[
N= \bigl((0,1]\times S^1\bigr)\big/\sim, \quad
\sim\colon\{x=1\}\times S^1 \mapsto \text{pt},
\]
and equip it with the metric
\begin{equation}\label{eq:gN}
g_N = \frac{dx^2 + \tilde h(x,y)\,dy^2}{x^2},\quad
\tilde h(x,y)=h_0(x)+\chi(Rx)\cdot\bigl[h(x,y)-h_0(x)\bigr],
\end{equation}
where $h(x,y)$ is the function in the definition of an asymptotically hyperbolic manifold (Def~\ref{def:AH}). Then $h(x,y)$ can be chosen after changing the coordinate satisfy $h(0,y)=1/4$.
Then $g_N$ agrees with $g$ on $\{x\leqslant 1/(2R)\}$, while for $x\ge 1/R$ we have $\chi(Rx)=0$ and $\tilde h=h_0$, so $g_N=g_{\mathbb H^2}$.

For the curvature bound, note that on $\operatorname{supp}\chi(R\cdot)$ we have $x\leqslant R^{-1}$. Since $h$ and $h_0$ are both $C^1$ in $x$ and $h(0,y)=h_0(0)=1/4$, Taylor's theorem gives
\begin{equation}\label{eq:estimate_h}
  |h(x,y)-h_0(x)|\lesssim x\lesssim R^{-1}
\quad\text{on}\ \operatorname{supp}\chi(R\cdot).  
\end{equation}

Since we will need $(N,g_N)$ to be strictly negatively curved, we first give the following lemma:
\begin{lem}
    For $R$ large enough, the Gauss curvature $K_N$ of manifold $(N,g_N)$ satisfies 
    \[
    -\frac{3}{2}\le K_N\le -\frac{1}{2}.
    \]
\end{lem}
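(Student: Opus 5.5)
The plan is to compute the Gauss curvature of $g_N$ directly from the warped form \eqref{eq:gN}, splitting $N$ into three regions according to the value of $\chi(Rx)$. Write $\tilde h=a^2$ with $a>0$. For a metric $(dx^2+a(x,y)^2dy^2)/x^2$ the Gauss curvature is, as recalled in the proof of Proposition~\ref{prop:glue},
\[
K=-1+\frac{x a_x}{a}-\frac{x^2 a_{xx}}{a}.
\]
This formula involves only $x$-derivatives: $g$ is conformal to $dx^2+a^2dy^2$, whose curvature is $-a_{xx}/a$. In terms of $H=a^2$ it reads
\[
K=-1+\frac{xH_x}{2H}-\frac{x^2H_{xx}}{2H}+\frac{x^2H_x^2}{4H^2}.
\]
All constants below will be uniform in $y$ by compactness of $S^1$.

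\emph{Region $x\ge 1/R$.} Here $\chi(Rx)=0$, so $\tilde h=h_0$ and $g_N=g_{\mathbb H^2}$. This includes the point $x=1$, where the polar coordinates degenerate, but the metric there is exactly hyperbolic. Hence $K_N=-1$.

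\emph{Region $x\le 1/(2R)$.} Here $g_N=g$, and $g$ is asymptotically hyperbolic. In the formula above $H=h$ is smooth up to $x=0$ with $h(0,y)=1/4$, so $H$ is bounded below and the correction terms are $O(x)$. Therefore $K_g=-1+O(x)=-1+O(R^{-1})$, which lies in $[-3/2,-1/2]$ once $R$ is large.

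\emph{Transition region $1/(2R)\le x\le 1/R$.} This is the only real step. Write $H=\tilde h=h_0+D$ with $D(x,y)=\chi(Rx)\,(h-h_0)$. First, by \eqref{eq:estimate_h}, $|D|\lesssim R^{-1}$; since $h_0(x)\to 1/4$, this gives $H\ge 1/8$ for $R$ large.

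Next we bound the derivatives of $D$. Each $x$-derivative falling on $\chi(Rx)$ produces a factor $R$. Each derivative falling on $h-h_0$ loses one power of $x\sim R^{-1}$, because $h-h_0=O(x)$ with smooth $x$-derivatives. This gives
\[
|D_x|\lesssim R\cdot R^{-1}+1\lesssim 1,\qquad |D_{xx}|\lesssim R^2\cdot R^{-1}+R+1\lesssim R.
\]
Consequently $|xD_x|,\ |x^2D_{xx}|,\ |D|\lesssim R^{-1}$ on this region.

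Substituting $H=h_0+D$ into the curvature formula, the terms coming from $h_0$ alone reproduce the hyperbolic value $-1$. Every remaining term contains at least one of $D$, $xD_x$, $x^2D_{xx}$, multiplied by quantities that stay bounded because $H\ge 1/8$ and $xh_{0,x},\,x^2h_{0,xx}$ are bounded. Hence $K_N=-1+O(R^{-1})$ in the transition region, and choosing $R$ large gives $-3/2\le K_N\le -1/2$ on all of $N$.

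The main point requiring care is the bookkeeping in the transition region. The factors $R$ and $R^2$ from differentiating the cutoff must be exactly compensated by $x\sim R^{-1}$ and by the vanishing $h-h_0=O(x)$, and all constants must be uniform in $y$ (by compactness of $S^1$) and independent of $R$.
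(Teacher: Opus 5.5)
Your proposal is correct and follows essentially the same route as the paper, which also writes $\tilde h=h_0+\chi(Rx)\,x\rho_1$ and bounds this perturbation and its first two $x$-derivatives by $O(R^{-1})$, $O(1)$, $O(R)$. It then uses $x\sim R^{-1}$ on the transition region to get $K_N=-1+O(R^{-1})$. Your version is slightly more careful: you rewrite the curvature formula correctly in terms of $H=a^2$ (the paper plugs the perturbation of $a^2$ directly into the formula stated for $a$), and you treat explicitly the regions where $\chi(Rx)\in\{0,1\}$, which the paper leaves implicit.
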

\begin{proof}
    The computation is similar to Proposition~\ref{prop:glue}. For metric 
    \[
    g=\frac{dx^2 + a(x,y)^2\,dy^2}{x^2},
    \]
    the expression of Gauss curvature is 
    \[
    K_g=-1+\frac{xa_x}{a}-\frac{x^2 a_{xx}}{a}.
    \]
    In our case, $\tilde h(x,y)=h_0(x)+\chi(Rx)\cdot\bigl[h(x,y)-h_0(x)]$. We denote $\rho(x,y)=h(x,y)-h_0(x)$, then $\rho(0,y)=0$ implies $\rho(x,y)=x\rho_1(x,y)$. Then for $R(x,y)=\chi(Rx)x\rho_1(x,y)$, taking derivative with respect to $x$, we have
    \begin{align*}
R_x   &= \chi \rho_1 + Rx \chi' \rho_1 + Rx \chi \rho_{1,x}, \\
R_{xx} &= 2R\chi'\rho_1 + 2\chi\,\rho_{1,x}
        + R^2x\,\chi''\rho_1 + 2Rx\chi'\rho_{1,x} + x\,\chi\,\rho_{1,xx}.
\end{align*}
Hence the Gauss curvature of $(N,g_N)$ has the expression:
\[
K_{g_N}=-1+\frac{x R_x}{\sqrt{\tilde h}}-\frac{x^2R_{xx}}{\sqrt{\tilde{h}}}.
\]
Since we will only focus on the area $\{\chi(Rx)\neq 1,0\}$, which is near the boundary, so we can assume $\tilde{h}(x,y)>\varepsilon_0>0$. And by the computation above, we have
\[
|R| = O(R^{-1}),\quad
|R_x| = O(1),\quad
|R_{xx}| = O(R).
\]
After multiplying with $x$ or $x^2$, we have $|xR_x|$ and $|x^2 R_{xx}|$ is of $O(R^{-1})$. So after choose $R$ large, we have $K_{g_N}$ is closed to $-1$, which completes the proof.
\end{proof}
$(N,g_N)$ has strictly negative curvature, so (a)--(c) hold. (d) will be proved in Lemma~\ref{on_eigen}.

\subsection{Absence of eigenvalues and resonances}\label{subsec:noresonance}

Recall that an eigenvalue of $-\Delta_{g_N}$ is a number $\mu\in\mathbb{R}$ for which there exists $u\in L^2$, $u\neq 0$, with $-\Delta_{g_N}u=\mu u$. Resonance is defined as the poles of the meromorphic extension of the resolvent $R_M(\lambda)$ (see \cite{dyatlov2019mathematical} and \cite{guillarmou2005meromorphic} for the meromorphic extension in the non-even case).

\begin{lem}\label{on_eigen}
Suppose $R$ is sufficiently large. Then the Laplacian $-\Delta_{g_N}$ for the metric $g_N$ on $N$ has no eigenvalue in $[0,1/4]$ and has no resonance at $1/4$, equivalently at $\lambda=0$.
\end{lem}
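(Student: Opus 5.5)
We propose to derive Lemma~\ref{on_eigen} from a perturbation argument around the model hyperbolic disc. For $\mathbb H^2$ itself the facts are classical. The spectrum of $-\Delta_{\mathbb H^2}$ is purely absolutely continuous and equal to $[1/4,\infty)$, so there is no eigenvalue in $[0,1/4]$. The resolvent $(-\Delta-1/4-\lambda^2)^{-1}$ has an explicit kernel given by a Legendre function, and it has no pole at $\lambda=0$. Equivalently, the zero-energy generalized eigenfunction is the spherical function $\varphi_0(r)\sim r e^{-r/2}$, which is not of the form $e^{-r/2}$ times a function smooth in $x$. So $\lambda=0$ is not a resonance. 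Since $N$ differs from $\mathbb H^2$ only on the thin collar $\{x\le 1/R\}$, where by \eqref{eq:estimate_h} the coefficient perturbation is $O(R^{-1})$, the aim is to show that these properties persist for $R$ large.

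\emph{Step 1 (no eigenvalues).} The first approach would be a Hardy/Poincaré type inequality with a constant uniform in $R$. Since $K_N\le -1/2$ and $N$ is simply connected, McKean's inequality gives only $\lambda_0\ge 1/8$. That is not sufficient, so instead we would use a direct ODE/separation argument. Write $u$ in Fourier modes in $y$ where the metric is rotationally symmetric, i.e.\ on $x\ge 1/R$. On the collar we would use the boundary pairing together with a Rellich-type uniqueness statement. Any $L^2$ solution of $(-\Delta-1/4-\lambda^2)u=0$ with $\operatorname{Re}\lambda\ge0$ and $\lambda^2\in[-1/4,0]$ has the expansion $u=x^{1/2+i\lambda}(\cdots)$ near $x=0$. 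We then write the equation as $-\Delta_{\mathbb H^2}u -\mu u = Vu$, where $V=\Delta_{g_N}-\Delta_{\mathbb H^2}$ is a second order operator with coefficients $O(R^{-1})$ in the weighted (0-calculus) sense, supported in $x\le 1/R$. We would use the weighted resolvent bounds for $\mathbb H^2$, namely $R_{\mathbb H^2}(\lambda)\colon x^{\delta}L^2\to x^{-\delta}H^2_0$ uniformly for $\lambda$ in a neighborhood of $[0,i/2]$, including $\lambda=0$ since there is no resonance there. A Neumann series then shows that $I - R_{\mathbb H^2}(\lambda)V$ is invertible on $x^{-\delta}H^2_0$. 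Any such $u$ would satisfy $u=R_{\mathbb H^2}(\lambda)Vu$, which forces $u=0$. The same argument, applied to a putative resonant state at $\lambda=0$ (which lies in $x^{1/2-\delta}$-type spaces), gives the absence of a resonance.

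\emph{Step 2 (smallness of $V$).} We have to verify that $\|x^{-\delta}Vx^{-\delta}\|_{H^2_0\to L^2}=O(R^{-1+2\delta})$, or at least $o(1)$. The coefficients of $V$, written in terms of $x\partial_x$ and $x\partial_y$, are controlled by $\tilde h-h_0$ and $x\partial_x(\tilde h-h_0)$. By the computation in the preceding curvature lemma these are $O(R^{-1})$, and the weights cost at most $R^{2\delta}$ on $x\le 1/R$.

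The main obstacle is uniformity at $\lambda=0$, where the threshold makes the weighted resolvent bounds of $\mathbb H^2$ delicate. The operator $R_{\mathbb H^2}(0)$ maps $x^{\delta}L^2$ only into spaces with $x^{1/2}\log x$ growth, so the weight bookkeeping has to be carried out carefully. We would handle this by working with the explicit kernel of $R_{\mathbb H^2}(\lambda)$, which is bounded by $C e^{-d/2}(1+d)$ near $\lambda=0$, and by Schur bounds using $\delta<1/2$. As a fallback, we would reduce via the rotational-symmetry region $x\ge 1/R$ and use Wronskian/ODE comparison for each Fourier mode on the collar.
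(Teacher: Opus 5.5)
This is essentially the paper's proof: you compare with $\mathbb H^2$, use that $R_{\mathbb H^2}(\lambda)\colon x^{\delta}L^2\to x^{-\delta}H^2$ is uniformly bounded for $\lambda\in[0,i/2]$ (since $\mathbb H^2$ has no resonance at $0$), show $V=\Delta_{g_N}-\Delta_{\mathbb H^2}\colon x^{-\delta}H^2\to x^{\delta}L^2$ has small norm, and invert $I-R_{\mathbb H^2}(\lambda)V$ by a Neumann series; the paper reads off holomorphy of $R_{g_N}(\lambda)=(I-R_{\mathbb H^2}(\lambda)V)^{-1}R_{\mathbb H^2}(\lambda)$ directly on weighted spaces (and cites Bouclet for the eigenvalue $1/4$), which spares you from justifying $u=R_{\mathbb H^2}(0)Vu$ for threshold eigenfunctions or resonant states, a step that at $\lambda=0$ needs a limiting argument. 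One correction to your Step 2: on $\{x\le 1/R\}$ the weight $x^{-2\delta}$ is at least $R^{2\delta}$ and unbounded as $x\to 0$, so the smallness comes from the coefficients of $V$ being $O(x)$ there (because $h(0,y)=h_0(0)$), which gives $x^{1-2\delta}\le R^{2\delta-1}$ for $\delta<1/2$, not from a uniform $O(R^{-1})$ coefficient bound.
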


\begin{proof}

\emph{Step 1.} By a direct computation,
\begin{equation}
\Delta_{g} f = x^2 \partial_x^2 f
+ x^2 \left(\frac{\partial_x h}{2h}\right) \partial_x f
+ \frac{x^2}{h} \partial_y^2 f
- x^2 \left(\frac{\partial_y h}{2h^2}\right) \partial_y f.
\end{equation}
Hence
\begin{align}
\Delta_{g_{\mathbb{H}^2}} f 
&= x^2 \partial_x^2 f 
   + x^2\,\frac{h_0'(x)}{2h_0(x)}\,\partial_x f 
   + \frac{x^2}{h_0(x)}\,\partial_y^2 f,\\
\Delta_{g_N} f 
&= x^2 \partial_x^2 f 
   + x^2\,\frac{\partial_x \tilde{h}(x,y)}{2\tilde{h}(x,y)}\,\partial_x f 
   + \frac{x^2}{\tilde{h}(x,y)}\,\partial_y^2 f 
   - x^2\,\frac{\partial_y \tilde{h}(x,y)}{2\tilde{h}(x,y)^2}\,\partial_y f,
\end{align}
so that
\begin{equation}\label{cha}
\Delta_{g_N} - \Delta_{g_{\mathbb{H}^2}}= x^2 \left( \frac{\partial_x \tilde{h}}{2\tilde{h}} - \frac{\partial_x h_0}{2h_0} \right) \partial_x + x^2 \left( \frac{1}{\tilde{h}} - \frac{1}{h_0} \right) \partial_y^2
   - x^2 \frac{\partial_y \tilde{h}}{2\tilde{h}^2} \partial_y.
\end{equation}
On $\operatorname{supp}\chi(R\cdot)$ we have $x\leqslant 1/R$ and $\tilde{h}(0,y)=h_0(0)=1/4$, whence 
\[
\frac{1}{\tilde{h}}-\frac{1}{h_0}\lesssim x\lesssim R^{-1},\quad \partial_x\tilde{h}-\partial_x h_0=R\,\chi'(Rx)\,(h(x,y)-h_0(x))+\chi(Rx)\,O(1).
\]
For the first term on the right-hand side of the second equation, $\chi'\neq0$ forces $Rx\sim 1$, hence $x\sim R^{-1}$. Combining this with \eqref{eq:estimate_h}, so in all $\partial_x\tilde{h}-\partial_x h_0=O(1)$.

Comparing the resolvents of $g_N$ and $g_{\bbH^2}$,
\[
R_{g_N}(\lambda)-R_{g_{\bbH^2}}(\lambda)=-R_{g_{\bbH^2}}(\lambda)\circ (\Delta_{g_N}-\Delta_{g_{\bbH^2}})\circ R_{g_N}(\lambda),
\]
hence
\begin{equation}\label{idd}
R_{g_{\bbH^2}}(\lambda)=(1+K(\lambda))R_{g_N}(\lambda),\quad K(\lambda)=R_{g_{\bbH^2}}(\lambda)\circ (\Delta_{g_N}-\Delta_{g_{\bbH^2}}).
\end{equation}
From \cite{borthwick2016spectral}, we know that $R_{g_{\bbH^2}}(\lambda)$ can be extended to the whole complex plane, and for any $a>0$, the extension can be viewed as(in \cite{guillarmou2005meromorphic}):
\[
(\operatorname{Im}\lambda>0)\to \mathcal{L}(L^2,L^2)\quad\Longrightarrow\quad  (\operatorname{Im}\lambda>-a)\to \mathcal{L}(x^aL^2,x^{-a}L^2);
\]
as a meromorphic map. Since $\lambda=0$ is not a resonance of $R_{g_{\bbH^2}}$(see \cite{borthwick2016spectral} for explicite computation), it is holomorphic around $\lambda=0$. 

\emph{Step 2.} In the rest of the proof, $L^2$ and $H^2$ are taken with respect to $g_{\bbH^2}$. Suppose $u=R_{g_{\bbH^2}}(0)f$ with $f\in x^{a}L^2$; then $u\in x^{-a}L^2$, and $v=x^a u\in L^2$. From $(\Delta_{\bbH^2}+1/4)u=f$ we obtain
\[
x^a(\Delta_{g_{\bbH^2}}+1/4)x^{-a}v=x^a f.
\]
The operator $x^a(\Delta_{g_{\bbH^2}}+1/4)x^{-a}$ has the same principal part as $\Delta_{g_{\bbH^2}}+1/4$:
\begin{equation}
x^a(\Delta_{g_{\bbH^2}})x^{-a} v=x^2\partial_x^2v+\frac{x^2}{h_0(x)}\partial_y^2v+b(x)x\partial_xv+ c(x)v,
\end{equation}
with 
\[
b(x)=x\frac{h_0'(x)}{2h_0(x)}-2a,\quad c(x)=a(a+1)-ax\frac{h_0'(x)}{2h_0(x)},
\]
smooth and bounded. Since $\bbH^2$ is complete with bounded geometry, the elliptic regularity of $x^a(\Delta_{g_{\bbH^2}}+1/4)x^{-a}$ gives
\begin{equation}\label{es:elliptic}
\|v\|_{H^2}\lesssim \|x^af\|_{L^2}+\|v\|_{L^2}.
\end{equation}
As $\|x^af\|_{L^2},\,\|v\|_{L^2}<\infty$, this implies $v\in H^2$. Hence $u\in x^{-a}H^2$, and $R_{g_{\bbH^2}}(1/2)\colon x^aL^2\to x^{-a}H^2$.

\emph{Step 3.} Consider the operator $V=\Delta_{g_N}-\Delta_{g_{\bbH^2}}$ on $x^{-a}H^2$. For $u\in x^{-a}H^2$ we estimate $\|x^{-a}Vu\|_{L^2}$. By \eqref{cha} we may write
\[
V=a_1(x,y)(x\partial_x)+a_2(x,y)(x\partial_y)^2+a_3(x,y)(x\partial_y),
\]
and from $\partial_y\tilde{h}\lesssim x$,
\[
|a_1(x,y)|\lesssim x, \quad|a_2(x,y)|\lesssim x,\quad |a_3(x,y)|\lesssim x^2,\quad \operatorname{supp}a_j\subset\{|x|\leqslant R^{-1}\}.
\]
Then
\begin{equation}
\begin{split}
\|x^{-a}Vu\|_{L^2}^2
&=\int_{x\leqslant R^{-1/2}}|Vu|^2\,x^{-2a}\,dV_{g_{\bbH^2}}\\
&\lesssim\int_{x\leqslant R^{-1/2}} \left(x^2|x\partial_x u|^2+x^2|(x\partial_y)^2u|^2+x^4|x\partial_y u|^2\right)\,x^{-2a}\,dV_{g_{\bbH^2}}\\
&\lesssim\int_{x\le R^{-1/2}}
   \left(|x\partial_x u|^2+|(x\partial_y)^2u|^2+|x\partial_y u|^2\right)\,x^{2-2a}\,dV_{g_{\bbH^2}}.
\end{split}
\end{equation}
Choosing $a<1/2$, we have $x^{2-2a}=x^{2-4a}\cdot x^{2a}$ with $2-4a>0$, hence $x^{2-4a}\leqslant R^{-1+2a}$ on $\{x\leqslant R^{-1/2}\}$, so
\begin{equation}\label{es:V}
\begin{split}
\|x^{-a}Vu\|_{L^2}^2
&\lesssim R^{2a-1}\int_{x\leqslant R^{-1/2}}\left(|x\partial_x u|^2+|(x\partial_y)^2u|^2+|x\partial_y u|^2\right)\,x^{2a}\,dV_{g_{\bbH^2}}\\
&\lesssim R^{2a-1}\cdot\|x^au\|_{H^2}^2.
\end{split}
\end{equation}
Thus
\[
V=\Delta_{g_N}-\Delta_{g_{\bbH^2}}\colon x^{-a}H^2\to x^aL^2\quad\text{is bounded with norm } CR^{a-1/2}\to 0\ (R\to \infty).
\]
Consequently
\[
K(\lambda)\colon \,x^{-a}H^2\xrightarrow{V} \, x^aL^2\xrightarrow{R_{g_{\bbH^2}}(\lambda)}\,x^{-a}H^2
\]
is bounded for $\lambda$ near $1/2$, and by the above its norm is $<1$ for $R$ large enough. Hence the Neumann series
\[
(1+K(\lambda))^{-1}=\sum_{j\geqslant0} (-K(\lambda))^j
\]
is well defined and holomorphic. Finally, by \eqref{idd},
\[
R_{g_N}(\lambda)=(1+K(\lambda))^{-1}\circ R_{g_{\bbH^2}}(\lambda)\colon x^aL^2\to x^{-a}H^2
\]
is holomorphic, so $1/2$ is not a resonance of $R_{g_N}$.

\emph{Step 4.} The eigenvalue in $(0,1/4]$ coincide with the resonance in $\{\operatorname{Re}(\lambda)=0,\operatorname{Im}(\lambda)\in (0,1/2].\}$. The absence of an eigenvalue at $1/4$ is shown in \cite{bouclet2013absence}. We will focus on $\lambda\in (0,i/2]$. Since \eqref{es:elliptic} and \eqref{es:V} is always true, and the bound is independent from $\lambda$, so $R_{g_{\bbH^2}}(\lambda)\colon x^aL^2\to x^{-a}H^2$ and $V\colon x^{-a}H^2\to x^aL^2$ uniformly bounded for $\lambda\in [0,i/2]$. So if we choose $R$ sufficiently large, then $K(\lambda)$ will be invertible for $\lambda\in [0,i/2]$. In the end,
\[
R_{g_N}(\lambda)=(1+K(\lambda))^{-1}\circ R_{g_{\bbH^2}}(\lambda)\colon x^aL^2\to x^{-a}L^2,
\]
is holomorphic for $\lambda\in [0,i/2]$, hence there is no eigenvalue in $[0,1/4]$.
\end{proof}

\section{Proof of Theorem~\ref{thm2}}\label{sec:thm2}

Throughout this section, $(M,g)$ is a smooth two-dimensional asymptotically hyperbolic surface with sectional curvature $\le -k^2<0$. The even case is treated in \cite{Bounded_HSTZ}; we explain the changes needed in the non-even setting. We first record the estimates that enter the argument, then prove the global Strichartz estimate in detail, and finally indicate the spectral-projection estimate.

\subsection{Estimates used as black boxes}\label{subsec:inputs}

We collect the estimates that enter the proof. Throughout, $P=\sqrt{-\Delta_g}$ and $\pi(K)$ denotes the projection of the trapped set.

\medskip
\noindent\textbf{(1) Non-trapping resolvent estimate.} For $\chi\in C_c^\infty(M)$ supported away from $\pi(K)$, Proposition~\ref{prop:Nontrappingcase} gives
\begin{equation}\label{far_away}
\|\chi\,R_M(\lambda\pm i0)\,\chi\|_{L^2\to L^2}\leqslant C\lambda^{-1},
\qquad \lambda>C.
\end{equation}

\noindent\textbf{(2) Log-loss resolvent estimate.} By Theorem~\ref{Thm},
\begin{equation}\label{any}
\|\chi\,R_M(\lambda\pm i0)\,\chi\|_{L^2\to L^2}\leqslant C\lambda^{-1}\log\lambda,
\qquad \chi\in C_c^\infty(M).
\end{equation}

\noindent\textbf{(3) Local smoothing.} From \eqref{far_away}--\eqref{any} and \cite[Theorem~7.2]{dyatlov2019mathematical}, for $\beta\in C_c^\infty((1/2,2))$,
\begin{align}
\|\chi\,e^{-it\Delta_g}\beta(P/\lambda)u_0\|_{L^2_{t,x}(M\times\mathbb R)}
&\leqslant C\lambda^{-1/2}(\log\lambda)^{1/2}\|u_0\|_{L^2},
\qquad \chi\in C_c^\infty(M),\label{lsLL}\\
\|\chi_1\,e^{-it\Delta_g}\beta(P/\lambda)u_0\|_{L^2_{t,x}(M\times\mathbb R)}
&\leqslant C\lambda^{-1/2}\|u_0\|_{L^2},
\qquad \chi_1\in C_c^\infty(M\setminus\pi(K)).\label{lsNT}
\end{align}

\noindent\textbf{(4) Non-trapping Strichartz.} 
For the manifold $(N,g_N)$ in Section \ref{sec:comparison}, by Lemma~\ref{on_eigen} and \cite[Theorem 1]{chen2018resolvent}, for $\frac{1}{p}+\frac{1}{q}=\frac{1}{2}, \,p,q\ge 2$ and $(p,q)\neq(2,+\infty)$, we have 
\begin{equation}\label{Non-trapping}
\|e^{-it\Delta_{g_N}}u_0\|_{L^p_tL^q_x(N\times \mathbb{R})}\le C\|u_0\|_{L^2}.
\end{equation}

\noindent\textbf{(5) Log-scale lossless Strichartz.} Since $(M,g)$ has uniformly bounded geometry and nonpositive sectional curvature, \cite[Thm 1.4]{Bounded_HSTZ} gives
\begin{equation}\label{StrLog}
\|\beta(P/\lambda)e^{-it\Delta_g}u_0\|_{L^p_tL^q_x(M\times[0,\lambda^{-1}\log\lambda])}
\le C\|u_0\|_{L^2}.
\end{equation}

\noindent\textbf{(6) Log-scale sharp spectral projection.} Since the sectional curvatures of $M$ are below $-k^2$ and $M$ has bounded geometry, \cite[Thm 1.6]{Bounded_HSTZ} yields
\begin{equation}\label{spLog}
\|1_{[\lambda,\lambda+(\log\lambda)^{-1}]}(P)\|_{L^2\to L^q}
\le C\lambda^{\mu(q)}(\log\lambda)^{-1/2},
\qquad q\in(2,\infty].
\end{equation}

\noindent\textbf{(7) Non-trapping spectral projection.} For manifold $(N,g_N)$ as we construct in Section~\ref{sec:comparison}, \cite[Theroem 1.6]{chen2018resolventII} says:
\begin{equation}\label{nontrapping:spectral}
    \|\mathbf{1}_{[\lambda,\lambda+\delta]}(P) f\|_{L^q(N)}\leqslant C\lambda^{\mu(q)}\delta^{1/2}\|f\|_{L^2(N)}.
\end{equation}
where 
\begin{equation}\label{mu}
        \mu(q)=\left\{\begin{aligned}
            &1/2-2/q,\quad q\geqslant 6,\\
            &\frac{1}{2}(1/2-1/q),\quad q\in (2,6].
        \end{aligned}
        \right.
\end{equation}

\medskip
Combining these, \cite[Lem 2.1]{Bounded_HSTZ} gives the lossless local Strichartz estimate.
\begin{lem}\label{lem:lossless}
The estimates \eqref{far_away}--\eqref{StrLog} imply
\begin{equation}
\|e^{-it\Delta_g}u_0\|_{L^p_tL^q_x(M\times[0,1])}\le C\|u_0\|_{L^2}.
\end{equation}
\end{lem}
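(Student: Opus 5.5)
The plan is to follow the scheme of \cite[Lem 2.1]{Bounded_HSTZ}: reduce to frequency-localized data, then split the solution into a part far from the trapped set, handled by the non-trapping comparison surface $(N,g_N)$, and a part near $\pi(K)$, handled by the log-scale estimate \eqref{StrLog} together with local smoothing. First, by Littlewood--Paley theory on $M$ (bounded geometry) and the standard square-function argument for $q<\infty$, it suffices to prove, uniformly in $\lambda\gg1$,
\[
\|\beta(P/\lambda)e^{-it\Delta_g}u_0\|_{L^p_tL^q_x(M\times[0,1])}\le C\|u_0\|_{L^2}.
\]
Low frequencies are handled by Sobolev embedding on unit time. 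Write $u=e^{-it\Delta_g}\beta(P/\lambda)u_0$. Pick $\rho\in C_c^\infty(M)$ with $\rho=1$ near $\pi(K)$, and such that $1-\rho$ is supported where $g=g_N$; this is possible after enlarging the compact region, since $g_N$ agrees with $g$ near infinity. Then split $u=\rho u+(1-\rho)u$.

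For $(1-\rho)u$, we have $(i\partial_t-\Delta_{g_N})((1-\rho)u)=[\Delta_g,\rho]u$, because $\Delta_g=\Delta_{g_N}$ on $\operatorname{supp}(1-\rho)$. Duhamel's formula, the global Strichartz estimate \eqref{Non-trapping} on $N$, and the Christ--Kiselev lemma (together with its dual endpoint smoothing form) bound this part by $\|u_0\|_{L^2}$ plus $\lambda^{-1/2}\|[\Delta_g,\rho]u\|_{L^2_{t,x}}$. The commutator is first order, of size $\lambda$ on frequency-$\lambda$ functions, and is supported in $\operatorname{supp}\nabla\rho$, which lies away from $\pi(K)$. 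The non-trapping local smoothing \eqref{lsNT}, applied after inserting a fattened frequency cutoff, then gives $\lambda^{-1/2}\cdot\lambda\cdot\lambda^{-1/2}\|u_0\|_{L^2}$, which is acceptable. The term is used with the dual homogeneous smoothing estimate on $N$ (from nontrapping Kato smoothing there), so the $\lambda^{-1/2}$ gain is available.

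For $\rho u$, split $[0,1]$ into intervals $I_j$ of length $\lambda^{-1}\log\lambda$, about $\lambda/\log\lambda$ of them. On each $I_j$, Duhamel's formula applied to $\rho u$, with the equation on $M$ and forcing $[\Delta_g,\rho]u$, together with \eqref{StrLog} and Christ--Kiselev, gives
\[
\|\rho u\|_{L^pL^q(I_j)}\lesssim\|\tilde\rho u(t_j)\|_{L^2}+\lambda^{1/2}\|\tilde\chi u\|_{L^2(I_j\times M)},
\]
where $\tilde\rho,\tilde\chi$ are slightly larger cutoffs. The $\lambda^{1/2}$ comes from using the dual of the log-time Strichartz estimate with a smoothing-type inhomogeneous bound. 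We then take $\ell^2$ sums over $j$, using $p\ge2$. The key point is to replace $\|\tilde\rho u(t_j)\|_{L^2}$ by an average over $I_j$, so that
\[
\sum_j\|\tilde\rho u(t_j)\|^2\lesssim(\lambda/\log\lambda)\|\tilde\rho u\|_{L^2_{t,x}}^2+\text{error}.
\]
The log-loss local smoothing \eqref{lsLL} bounds this sum by $(\lambda/\log\lambda)\cdot\lambda^{-1}\log\lambda\,\|u_0\|^2=\|u_0\|^2$. The forcing term likewise gives $\lambda\cdot\lambda^{-1}\log\lambda$, which is too large. To repair this, take the forcing cutoff to be $\nabla\rho$, which is supported away from the trapped set, and use \eqref{lsNT} instead, giving $\lambda\cdot\lambda^{-1}=O(1)$.

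I expect the main obstacle to be exactly this bookkeeping: making the $\log\lambda$ from the interval length cancel the $\log\lambda$ loss in \eqref{lsLL}. This is where both the sharpness of Theorem~\ref{Thm} and the $\sqrt{\log\lambda}$-free non-trapping bound \eqref{far_away} are essential. The averaging in $t_j$ requires choosing each $t_j$ inside $I_j$ to minimize $\|\tilde\rho u(t)\|$, which is allowed since the Duhamel formula can be based at any point of $I_j$.
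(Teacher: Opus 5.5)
Your proposal is correct and reconstructs essentially the argument the paper invokes through \cite[Lem 2.1]{Bounded_HSTZ}. The far part is treated on $(N,g_N)$ by Duhamel, Christ--Kiselev and dual non-trapping smoothing, and the near part on intervals of length $\lambda^{-1}\log\lambda$ via \eqref{StrLog}, with the number of intervals cancelled by \eqref{lsLL} and the commutator forcing controlled by \eqref{lsNT}. Your choice of base times $t_j$ where $\|\rho u(t)\|_{L^2}$ is below its average on $I_j$ is an equivalent substitute for a smooth time partition (whose $\varphi_j'$ term gives the same $(\lambda/\log\lambda)\|\rho u\|_{L^2_{t,x}}^2$ bound). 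The one input you use beyond the listed estimates, local smoothing on $N$, is available because $N$ is nontrapping.
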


\subsection{Global Strichartz estimate}\label{subsec:strichartz}

The proof follows that of \cite[Theorem 1.1]{Bounded_HSTZ}, with the comparison manifold $(\tilde M,\tilde g)$ there replaced by the surface $(N,g_N)$ of Section~\ref{sec:comparison}; the changes are only notational. 

The proof is the same as that of \cite{Bounded_HSTZ}, once all the required estimate inputs have been established. We therefore only describe the differences between the two settings.

The first difference concerns the estimate inputs. In \cite{Bounded_HSTZ}, the relevant estimates are obtained in the setting of even asymptotically hyperbolic manifolds. In our setting, the corresponding resolvent input is replaced by Theorem~\ref{Thm}, from which the required local smoothing estimates follow.

The second main difference concerns the comparison manifold near infinity, which is required to have no resonance at the bottom of the continuous spectrum. In \cite{Bounded_HSTZ}, such a comparison manifold is constructed using Vasy's method under the evenness assumption, whereas in our setting the manifold $(N,g_N)$ is constructed in Section~\ref{sec:comparison}.

\subsection{Spectral projection estimate}\label{subsec:projection}

The spectral-projection estimate of Theorem~\ref{thm2} (2) is proved exactly as in \cite{Bounded_HSTZ}; since the computations are long and identical in our notation, we only indicate the ingredients and refer to \cite{Bounded_HSTZ} for details. The proof uses
\begin{itemize}
\item the comparison surface $(N,g_N)$ of Section~\ref{sec:comparison} (denoted $(\tilde M,\tilde g)$ in \cite{Bounded_HSTZ});
\item the local smoothing estimates \eqref{lsLL}--\eqref{lsNT};
\item the Littlewood--Paley decomposition of \cite[Lemma 2.2]{Bounded_HSTZ};
\item the almost-orthogonality estimates used in the proof of the global Strichartz estimate, namely \cite[Lemma 2.5]{Bounded_HSTZ} and \cite[Lemma 2.6]{Bounded_HSTZ}.
\item Non-trapping spectral projection \eqref{nontrapping:spectral} and log-scale sharp spectral projection \eqref{spLog}.
\end{itemize}

\printbibliography
\end{document}